                   \def\version{6 January, 2021}		       %
\numberwithin{equation}{section}
\def\1{{\mathchoice {1\mskip-4mu\mathrm l}      
{1\mskip-4mu\mathrm l} 
{1\mskip-4.5mu\mathrm l} {1\mskip-5mu\mathrm l}}} 
\renewcommand{\d}{{\rm d}}
\newcommand{\R}     {\mathbb{R}} 
\newcommand{\N}     {\mathbb{N}} 
\renewcommand{\P}   {\mathbb{P}} 
\newcommand{\E}     {\mathbb{E}}
\newcommand{\smfrac}[2]{{\textstyle{\frac {#1}{#2}}}}
\newcommand{{\Mi}}{{\rm Mi}}
\newcommand{{\Ma}}{{\rm Ma}}
\newcommand{{\Me}}{{\rm Me}}
\newcommand{\ssup}[1] {{{\scriptscriptstyle{({#1}})}}} 
\newtheorem{theorem}{Theorem}[section] 
\newtheorem{lemma}[theorem]{Lemma} 
\newtheorem{prop}[theorem] {Proposition} 
\newtheorem{cor}[theorem]  {Corollary}
\theoremstyle{definition}
\newcommand{\eps}{\varepsilon} 
\newcommand{\Gcal}   {{\mathcal G }} 
\newcommand{\Ical}   {{\mathcal I }} 
\newcommand{\Jcal}   {{\mathcal J }}
\newcommand{\Mcal}   {{\mathcal M }} 
\newcommand{\Ncal}   {{\mathcal N }} 
\newcommand{\Pcal}   {{\mathcal P }} 
\newcommand{\Rcal}   {{\mathcal R }}
\newcommand{\e}   {{\operatorname e }}
\definecolor{Red}{rgb}{1,0,0}
\definecolor{darkspringgreen}{rgb}{0.09, 0.45, 0.27}
\begin{document}
\title[LDP for the cluster sizes of a sparse Erd\H{o}s-R\'enyi graph]
{A large-deviations principle\\
\medskip for all the cluster sizes\\
\medskip of a sparse Erd\H{o}s-R\'enyi graph}

\author[L. Andreis]{Luisa Andreis}
\address{WIAS, Mohrenstra{\ss}e 39, 10117 Berlin}
\email{luisa.andreis@wias-berlin.de}
\author[W. König]{Wolfgang König}
\address{TU Berlin and WIAS, Mohrenstra{\ss}e 39, 10117 Berlin}
\email{wolfgang.koenig@wias-berlin.de}
\author[R.I.A. Patterson]{Robert I. A. Patterson}
\address{WIAS, Mohrenstra{\ss}e 39, 10117 Berlin}
\email{robert.patterson@wias-berlin.de}

\maketitle
\centerline{\small(\version)} 
\begin{abstract} 
Let $\Gcal(N,\frac 1Nt_N)$ be the Erd\H{o}s-R\'enyi graph with connection probability $\frac 1Nt_N\sim t/N$ as $N\to\infty$ for a fixed $t\in(0,\infty)$.
We derive a large-deviations principle  for the empirical measure of the sizes of all the connected components of $\Gcal(N,\frac 1Nt_N)$, registered according to  microscopic sizes (i.e., of finite order), macroscopic ones (i.e., of order $N$), and mesoscopic ones (everything in between).
The rate function  explicitly describes the microscopic and macroscopic components and the fraction of vertices in components of  mesoscopic sizes.
Moreover, it clearly captures the well known phase transition  at $t=1$ as part of a comprehensive picture. The proofs rely on elementary combinatorics and on known estimates and asymptotics for the probability that subgraphs are connected. We also draw conclusions for the strongly related model of the multiplicative coalescent, the Marcus--Lushnikov coagulation model with monodisperse initial condition, and its gelation phase transition.
\end{abstract}

\bigskip\noindent 
{\it MSC 2020:} 05C80, 60F10, 60K35, 82B26.

\medskip\noindent
{\it Keywords and phrases.} Erd\H{o}s-R\'enyi random graph, component sizes, large deviations, empirical measure, phase transition, sizes, multiplicative coalescent, gelation.



\setcounter{section}{0} 

\section{Introduction}\label{Intro}

In this paper, we study the Erd\H{o}s-R\'enyi random graph $\Gcal(N,\frac{1}Nt_N)$, that is, the random graph on the vertex set $[N]=\{1,\dots,N\}$, where each two distinct vertices are independently connected with probability $\frac{1}Nt_N$. We will be working in the sparse regime, i.e., we assume that $\lim_{N\to \infty}t_N=t$ for some fixed $t\in(0,\infty)$. This is the regime in which the famous phase transition of the emergence of a giant cluster at $t=1$ occurs, which was detected and characterised for the first time in the seminal paper \cite{Erd60}. For an extensive overview on the model see the classical reference \cite{Bol01}.

Our new contribution  in this paper is a comprehensive study of the family of the sizes of all the connected components of  $\Gcal(N,\frac{1}Nt_N)$, registered according to the  asymptotic order of the size in the limit as $N\to\infty$. We distinguish here {\em microscopic components} (i.e., with size of order one), {\em macroscopic components} (i.e., size of order $N$, usually referred to as {\em giant components}) and {\em mesoscopic} ones (everything in between). 
We  summarize all this information in terms of two empirical measures and derive a large-deviation principle (LDP) for them. Our rate function is rather explicit.

Such a principle gives information about the exponential decay rate of all sorts of events, e.g, the emergence of more than one giant cluster or the presence of a non-trivial proportion of vertices in mesoscopic components. Moreover, the minimizers of the rate function represent the most likely configurations of the graph, which is expressed in terms of a law of large numbers  for the objects that satisfy the LDP. In this way, we recover the mentioned phase transition and collect detailed information about the statistics of the sizes of all the components, both in the subcritical regime (where no giant component occurs) and the supercritical one.

Many investigations of the Erd\H{o}s-R\'enyi graph and other random graphs in the sparse regime rely on approximations of subgraphs with certain Galton--Watson trees and other branching processes. We would like to stress that our approach does not use such arguments and is therefore an alternate ansatz.

In Section \ref{sec-Micromacro}, we introduce our approach, in Section \ref{sec-Results}, we formulate our main results about large deviations and in Section \ref{sec-phasetrans} their consequences for the phase transition,  and in Section \ref{subsect:ERRG}  we give a literature survey.

Our original interest in this study was triggered by a desire to understand {\em random particle processes with coagulation}, in particular its simplest variant, the {\em Marcus--Lushnikov model} with multiplicative coagulation kernel. We introduce this process and its connections with our work on the LDP for the Erd\H{o}s-R\'enyi graph in Section \ref{sec-Coagulation}.

Another highly interesting connection appears with a LDP-proof for the well-known {\em Bose--Einstein condensation} phase transition that appears in the free (i.e., non-interacting) Bose gas; we will explain the similarities and the differences in Section \ref{sec-BEC}.

\subsection{Micro- and macroscopic empirical measures}\label{sec-Micromacro}

Let us introduce the main objects that we study in this paper.
For the remainder of the paper, we fix $t\in(0,\infty)$ and  will be working with the graph $\Gcal(N,\frac{1}Nt_N)$, where $t_N=t+o(1)$ as $N\to\infty$. 
By 
\begin{equation}\label{componentsizes}
S_1^{\ssup N}\geq S_2^{\ssup N}\geq \dots\geq S_{n}^{\ssup N}\geq 1,\qquad\sum_{i=1}^{n}S_i^{\ssup N}=N,
\end{equation}
we denote the sizes of all the connected components of $\Gcal(N,\frac 1Nt_N)$, ordered in a decreasing way ($n\in\{1,\dots,N\}$ is the number of components). We want to describe the entire family $(S_i^{\ssup N})_{i\in\{1,\dots,n\}}$ in the limit $N\to\infty$. This is a comprehensive object, which contains several scales. In oder to adequately describe the most important two scales, it will be convenient to work with two {\em empirical measures} of the component sizes in the microscopic and macroscopic size ranges:
\begin{equation}\label{empiricalmeasures}
{\rm Mi}^{\ssup N}=\frac 1N\sum_{i=1}^{n}\delta_{S_i^{\ssup N}}\qquad \mbox{and}\qquad {\rm Ma}^{\ssup N}=\sum_{i=1}^{n}\delta_{\frac 1N S_i^{\ssup N}}.
\end{equation}
Intuitively, while ${\rm Mi}^{\ssup N}$ registers the proportion of components of \lq microscopic\rq\ sizes $1,2,3,\dots$ on the scale $N$, ${\rm Ma}^{\ssup N}$ registers the components  of \lq macroscopic\rq\ sizes, i.e.\ order $N$. Note that each of the two measures admits a one-to-one map onto the vector $(S_i^{\ssup N})_{i\in\{1,\dots,n\}}$ for fixed $N\in\N$ and therefore contains all the information  contained in the vector. However, in the limit $N\to\infty$, they will be able to describe only the statistics of the microscopic, respectively macroscopic, part of the particle configuration. We would like to stress here that this issue lies at the heart of the phase transition of the emergence of a giant component, i.e., a macroscopic size. 

Here is a non-technical, intuitive explanation: in the limit as $N\to \infty$, all sizes $S_i^{\ssup N}$ that somehow diverge, will vanish from the support of $\Mi^{\ssup N}$ \lq at infinity\rq, and all sizes $S_i^{\ssup N}$ that are $\ll N$ will vanish from the support of $\Ma^{\ssup N}$ \lq at zero\rq. Hence, $\Mi^{\ssup N}$ may leak out mass at infinity, and $\Ma^{\ssup N}$ at zero. It is by no means automatic that all the mass that leaks out from the microscopic part at infinity enters the macroscopic part at zero. In order to control that, we also need to take care of the {\it mesoscopic} mass, coming from particle masses $1\ll S_i^{\ssup N}\ll N$. Since here a lot of scales are contained (indeed, a continuum of scales), we will not be able to say anything about these sizes, but only about the total proportion of vertices belonging to such components. 

The famous phase transition (proved first in \cite{Erd60}) says that, for $t\leq 1$, there is no loss of mass from $\Mi^{\ssup N}$ (i.e., the first moment stays equal to one in the limit), and $\Ma^{\ssup N}$ convergesd to the zero measure (i.e., loses all its mass), while for $t>1$, the total mass of $\Mi^{\ssup N}$ loses a positive amount equal to that retained by $\Ma^{\ssup N}$ in the limit, and this results in a single Dirac measure. In both cases, the mesoscopic part vanishes,  even though mesoscopic components are present in the graph with high probability, but their proportion is negligible. 

These are assertions of the type of laws of large numbers. However, in the setting of a large-deviation principle as we are working here, we will obtain significant results also about the probabilities of several very unlikely events, like the emergence of non-trivial mesoscopic total mass, of more than one giant cluster and of different statistics of microscopic component sizes. Note that we decided to work on probabilities that are on an exponential scale $N$ and for connection probabilities of the form $\frac 1N(t+o(1))$ for general $t\in(0,\infty)$. This excludes for example all (highly interesting) phenomena that occur with respect to cluster sizes of order $N^{2/3}$ when considering more specified connection probabilities of the size $\frac 1N(1+ c N^{-1/3})$; see \cite{Ald97}.

Now let us give a more technical explanation of the issue about possible losses of masses, which will also set the frame for the mathematical treatment. We will conceive the discrete measure ${\rm Mi}^{\ssup N}=(\Mi^{\ssup N}_k)_{k\in\N}$ as a random element of the sequence set $\Ncal=\bigcup_{c\in[0,1]}\Ncal(c)$, where
\begin{equation}
\Ncal(c)=\Big\{\Lambda=(\lambda_k)_{k\in\N}\in[0,\infty)^\N\colon \sum_{k\in\N}k \lambda_k=c\Big\},\qquad c>0.
\end{equation}
We equip $\Ncal=\{\Lambda\colon\sum_k k\lambda_k\leq 1\}$ with the topology of coordinate-wise convergence, which makes it compact by the Bolzano--Weierstrass theorem combined with Fatou's lemma.

The point measure ${\rm Ma}^{\ssup N}$ is a random element of the set $\Mcal:=\bigcup_{c\in[0,1]}\Mcal_{\N_0}((0,1];c)$, where 
\begin{equation}
\Mcal_{\N_0}((0,1];c)=\Big\{\alpha\in \Mcal_{\N_0}((0,1])\colon \int_{(0,1]}x\,\alpha(\d x)=c\Big\},
\end{equation}
and $\Mcal_{\N_0}((0,1])$ is the set of all measures on $(0,1]$ with values in $\N_0=\left\{0\right\}\cup \N$. We equip $\Mcal$ with the topology that is induced by functionals of the form $\mu \mapsto \int_{(0,1]}f(x)\,\mu(\d x)$ where $f\colon(0,1]\to\R$ is continuous and compactly supported. We sometimes write the elements of $\Mcal$ as $\alpha=\sum_j\delta_{\alpha_j}$ with $1\geq \alpha_1\geq\alpha_2\geq \dots>0$ and $\sum_j \alpha_j\leq 1$, where $j$ extends over a finite subset of $\N$ or over $\N$. Then convergence is equivalent with the pointwise convergence of each of the atoms. By similar arguments as for $\Ncal$, also $\Mcal$ is compact. We equip the product of $\Ncal$ and $\Mcal$ with the product topology, so that it is also compact.

Important quantities are the expectations of the sub-probability distributions $\Lambda\in\Ncal$ respectively $\alpha\in\Mcal$, i.e., the maps
$$
\Lambda \mapsto c_\Lambda:=\sum_{k\in\N}k \lambda_k\qquad
\text{and}\qquad
\alpha \mapsto c_\alpha:=\int_{(0,1]}x\,\alpha(\d x).
$$
Note that they are not continuous in the respective topologies, but only  lower semicontinuous, according to Fatou's lemma. Indeed, even though the microscopic and macroscopic expectations $c_{\Mi^{\ssup N}}=\sum_k k{\rm Mi}^{\ssup N}_k$ and $c_{\Ma^{\ssup N}}=\int_{(0,1]} x\,{\rm Ma}^{\ssup N}(\d x)$ are each equal to one for any $N$, they may (and will) lose mass in the limit $N\to\infty$. We sometimes call $c_\Lambda$ and $c_\alpha$ the {\em total masses} of the microscopic, respectively macroscopic, configuration $\Lambda$ and $\alpha$, since they stand for the total number of particles, after scaling.

The mathematical treatment of the mesoscopic part of the component sizes is more technical, as it requires the introduction of two cutting parameters $R\in\N$ and $\eps\in(0,1)$. Indeed, a size $S_i^{\ssup N}$ is called $(R,\eps)${\em -mesoscopic} if $R<S_i^{\ssup N}<\eps N$, and the definition of mesoscopic sizes requires making the limit $N\to\infty$, followed by $R\to\infty$ and $\eps\downarrow 0$. There are several scales (indeed, a continuum of scales) contained in this part and in this regime it does not seem reasonable to consider an empirical measure for this part; therefore we will consider only the total proportion of mesoscopic  vertices.

Let us remark that our choice of considering exclusively the size of each component, disregarding its bond structure, comes from the interest in coagulation processes, where only the sizes matter; see Section~\ref{sec-Coagulation}. An extension of our work to empirical measures of the components seems to require only moderate additional work, at least as it concerns the microscopic part. See Section \ref{subsect:ERRG} for earlier LDP-investigation of the components as subgraphs.


\subsection{Our results: large-deviations principles} \label{sec-Results}

In this section, we present all our results on the LDP satisfied by the empirical measure of statistics of  component sizes of the Erd\H {o}s--R\'enyi graph  $\Gcal(N,\frac{1}Nt_N)$, the random graph on $[N]=\{1,\dots,N\}$ with connection probability $\frac{1}Nt_N$, and we assume that $t_N=t+o(1)$ with fixed $t\in(0,\infty)$.  In Section \ref{sec-phasetrans} we will draw conclusions about the phase transition from that. 
Our main result is the following description of the two empirical measures $\Mi^{\ssup N}$ and $\Ma^{\ssup N}$ in terms of a joint large-deviations principle (LDP). 

\begin{theorem}[LDP for the empirical measures]\label{thm-LDP}
As $N\to\infty$, the pair $({\rm Mi}^{\ssup N}, {\rm Ma}^{\ssup N})$ satisfies a large-deviations principle with speed $N$ and rate function 
$$
(\Lambda,\alpha)\mapsto I(\Lambda,\alpha;t)
=\begin{cases}
I_{\rm Mi}(\Lambda;t) + I_{\rm Ma}(\alpha;t)+(1-c_{\Lambda}-c_{\alpha})\Big(\frac t2-\log t\Big),&\mbox{if }c_\Lambda+c_\alpha\leq  1,\\
\infty&\mbox{otherwise,}
\end{cases}
$$
where we write $\Lambda=(\lambda_k)_{k\in\N}$ and 
\begin{eqnarray}
I_{\rm Mi}(\Lambda;t)&=& 
 \sum_{k=1}^{\infty} \lambda_k
    \log\frac{k!t\lambda_k}{\e\, k^{k-2}}+c_\Lambda\Big(1+ \frac t2-\log t \Big),\qquad c_\Lambda= \sum_{k=1}^{\infty}k \lambda_k,\label{I_mi}\\
I_{\rm Ma}(\alpha;t)&=& \int_0^1 \Big[x\log\frac{x}{1-\e^{-tx}}+\frac t2 x(1-x)\Big]\,\alpha(\d x),\qquad c_\alpha=\int_{(0,1]} x\,\alpha(\d x).\label{Stdef}
\end{eqnarray}
\end{theorem}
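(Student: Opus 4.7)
The plan is to start from the exact combinatorial identity
\begin{equation*}
\P\big((n_k)_{k\in\N}\big)=\frac{N!}{\prod_k(k!)^{n_k}\,n_k!}\prod_kq_N(k)^{n_k}\,(1-p_N)^{E_{\rm out}},
\end{equation*}
valid for every $(n_k)_k$ with $\sum_k k n_k=N$, where $n_k$ is the number of components of size $k$ in $\Gcal(N,p_N)$, $p_N=t_N/N$, $q_N(k)$ is the probability that $\Gcal(k,p_N)$ is connected, and $E_{\rm out}=\binom{N}{2}-\sum_kn_k\binom{k}{2}$. The three factors count, respectively, the unordered set-partitions of $[N]$ realising $(n_k)$, the internal connectedness of each prescribed block, and the absence of bridging edges between different blocks. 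To extract an LDP for $(\Mi^{\ssup N},\Ma^{\ssup N})$, I would split the component sizes into microscopic ($k\le R$), macroscopic ($k\ge\eps N$) and mesoscopic ($R<k<\eps N$) ranges and take the limits $N\to\infty$, then $R\to\infty$, then $\eps\downarrow 0$.

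Three size-adapted asymptotics for $q_N(k)$ drive the computation: for fixed $k$, the tree contribution dominates and $q_N(k)\sim k^{k-2}(t/N)^{k-1}$; for $k=\lfloor xN\rfloor$ with $x\in(0,1]$ the classical supercritical estimate gives $\frac1N\log q_N(k)\to x\log(1-\e^{-tx})$; and for $R\le k\le\eps N$ uniform tree-based bounds yield $q_N(k)=k^{k-2}(t/N)^{k-1}\e^{o(k)}$ with the $o(k)$ controlled uniformly. Inserting these into $-\frac1N\log\P$ together with Stirling's formula for $N!$ and each $n_k!$, and with $-\log(1-p_N)^{E_{\rm out}}=p_NE_{\rm out}+O(1/N)\to\frac{t}{2}\big(1-\int x^2\,\alpha(\d x)\big)$, the divergent $\log N$ contributions from the multinomial coefficient and from the tree factors cancel exactly, and the finite part splits cleanly: the micro block produces $I_{\rm Mi}(\Lambda;t)$; each macro atom $x$ contributes $x\log\frac{x}{1-\e^{-tx}}+\frac t2x(1-x)$, the first summand from the macro connectedness rate, the second from the no-bridging-edges factor combined with the choice of the $\lfloor xN\rfloor$ macro vertices; and every vertex in a mesoscopic component contributes exactly $\frac t2-\log t$. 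Summing the three contributions yields $-\frac1N\log\P\to I(\Lambda,\alpha;t)$ in the form of the theorem.

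Once this asymptotic of $\frac1N\log\P$ is available configuration by configuration, the LDP follows by a standard compact-space argument: since $\Ncal\times\Mcal$ is compact (by Bolzano--Weierstrass combined with Fatou), exponential tightness is automatic. The lower bound is obtained by selecting, for each $(\Lambda,\alpha)$, a discretised integer configuration whose empirical measures lie in a given open neighbourhood and invoking the asymptotic above. The upper bound comes from covering a closed set by finitely many small neighbourhoods, summing over the at most $\e^{o(N)}$ integer configurations in each neighbourhood, and exploiting the lower semicontinuity of $I$ (inherited from that of $\Lambda\mapsto c_\Lambda$ and $\alpha\mapsto c_\alpha$).

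The main technical obstacle is the mesoscopic range, which contains a continuum of scales and supports no empirical measure in the limit. Two facts have to be verified uniformly there: that a mesoscopic component of size $k$ contributes exactly $k(\frac t2-\log t)$ to $-\log\P$ up to $o(k)$ errors uniform in $R<k<\eps N$; and that the three regimes match continuously at their boundaries, so that the iterated limits $N\to\infty$, $R\to\infty$, $\eps\downarrow 0$ do not produce boundary errors. The value $\frac t2-\log t$ emerges as the $k\to\infty$ limit of the per-vertex microscopic rate: Stirling gives $\log k!-(k-2)\log k=-k+O(\log k)$, so $\frac1k\big(\log\frac{k!\,t\lambda_k}{\e\,k^{k-2}}+k(1+\frac t2-\log t)\big)\to\frac t2-\log t$ as $k\to\infty$, confirming that the micro rate function already carries the correct mesoscopic cost per vertex and that the three regimes glue without boundary errors.
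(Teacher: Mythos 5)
Your outline follows the same route as the paper: the exact product formula for the law of the component-size counts (set-partition count, connectivity probabilities $\mu_k$, no-bridging factor), a three-scale split at $R$ and $\eps N$, Stepanov-type asymptotics for fixed and macroscopic sizes with tree bounds in between, Stirling, an $\e^{o(N)}$ bound on the number of integer configurations, compactness of $\Ncal\times\Mcal$ to reduce to a weak LDP, and a recovery sequence for the lower bound; the bookkeeping producing $I_{\rm Mi}$, $I_{\rm Ma}$ and the per-vertex mesoscopic cost $\frac t2-\log t$ is consistent with the theorem. However, the step you yourself call the main obstacle is not closed by the argument you offer. The computation $\frac1k\big(\log\frac{k!\,t\lambda_k}{\e\,k^{k-2}}+k(1+\frac t2-\log t)\big)\to\frac t2-\log t$ tacitly sets $\frac1k\log\lambda_k\to0$ and drops the multiplicity terms: the factors $\ell_k!$ and the residue of order $\big(\frac{N\e}{k^2\ell_k t}\big)^{\ell_k}$ left after distributing $N!$ over the size classes. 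For a mesoscopic size $k$ these contribute per component a term of order $\log\frac{N}{k^{5/2}\ell_k}$, which is not $o(k)$ uniformly down to $k=R+1$ (with $R$ fixed before $N\to\infty$, or $R_N$ growing slower than $\log N$); summed over the window such terms can reach order $N/R$ in the exponent. So the claim that every mesoscopic component costs $k(\frac t2-\log t)+o(k)$ uniformly is false as stated, and only the aggregate is controlled: one needs an entropy/Jensen bound of the type $\sum_{R<k\le\eps N}\frac{\ell_k}{N}\log\frac{k^2\ell_k}{N}\ge-\gamma_R+o(1)$, exploiting $\sum_{k>R}\ell_k\le N/R$, together with a separate control of the factor $(1-\e^{kq_N})^{k-1}$ against the tree bound near the macroscopic cut-off (this is where a condition like $|\eps_N^{-1}\log\eps_N|=o(N)$ enters in the paper). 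Your uniform estimate should also read $\mu_k=k^{k-2}(t/N)^{k-1}\e^{O(\eps k)}$ rather than $\e^{o(k)}$ at fixed $\eps$; that is harmless in your iterated limits but it is not the statement you wrote.

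The second missing piece is the lower bound at pairs $(\Lambda,\alpha)$ with $c_\Lambda+c_\alpha<1$: you must exhibit an explicit configuration that realizes the missing mass at cost exactly $\frac t2-\log t$ per vertex, and your per-vertex limit does not produce one. The paper does this by concentrating all mesoscopic mass in components of one single diverging size $R_N$ and checking that the lower Stepanov bound, which differs from the tree upper bound by $(1-p)^{(k-1)(k-2)/2}$, is exactly compensated by the no-bridging factor $(1-p)^{k(N-k)/2}$; some such construction (and the corresponding dominated-convergence step for the macroscopic atoms using the precise asymptotics of $\mu_{\lfloor xN\rfloor}$) has to be added before your upper and lower bounds actually match. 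With these two ingredients supplied, your plan coincides with the paper's proof.
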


The proof of this theorem is in Section \ref{sec-ProofLDP}; it is based on an explicit combinatorial formula for the joint distribution of all the component sizes, followed by analysis of the arising exponential rates. We organised the three terms of $I$ in the way in which they were derived from the influences of the three parts (micro, macro and meso) in the course of the proof, even though this leads to a cancellation of terms involving $c_\Lambda$ and $c_\alpha$. This implies also that separate conclusions about the microscopic and the macroscopic parts can conveniently be made (see Corollaries \ref{cor-LDP_mi} and \ref{cor-LDP_ma}). Informally, in \eqref{I_mi} the terms involving $\lambda_k$, $\e$ and $k!$ in the logarithm derive from the combinatorial number of possibilities to decompose $[N]$ into the requested configuration of subsets, the term $k^{k-2}$ and the $t$ in the logarithm stem from the probability that these subsets are connected, and all the other terms from the probability that any of these subsets is not connected with the remainder. This interpretation is not immediate, since a number of asymptotic manipulations have been made during the proof. Similar remarks apply to \eqref{Stdef}. Interestingly, on the right-hand side of \eqref{I_mi} we see, up to normalization, a relative entropy of $(k\lambda_k)_{k\in\N}$ with respect to the {\em Borel distribution} ${\rm Bo}_\mu(k)=\e^{-\mu k}(\mu k)^{k-1}/k!$ for a particular choice of $\mu$; a fact that will be crucial in the analysis of minimizers of the rate function, see the proofs of Corollaries \ref{cor-LDP_mi} and \ref{cor-LDP_ma} and of Theorem \ref{thm-phasetrans}.

Let us recall the notion of an LDP: Theorem~\ref{thm-LDP} says that, for any open set $G\subset \Ncal\times\Mcal $ respectively closed set $F\subset \Ncal\times\Mcal$,
\begin{eqnarray*}
\liminf_{N\to\infty}\frac 1N\log\P_N(({\rm Mi}^{\ssup N}, {\rm Ma}^{\ssup N})\in G)&\geq& -\inf_G I(\cdot;t),\\ \limsup_{N\to\infty}\frac 1N\log\P_N(({\rm Mi}^{\ssup N}, {\rm Ma}^{\ssup N})\in F)&\leq& -\inf_F I(\cdot;t),
\end{eqnarray*}
where we wrote $\P_N$ for the probability measure for $\Gcal(N,\frac 1Nt_N)$. 
For a comprehensive presentation of the theory of large-deviations, see e.g.\ \cite{DZ10}. It is not difficult to see that since the rate function $I(\cdot,\cdot;t)$ is lower semicontinuous and $\Ncal\times\Mcal $ is compact, it is even a good rate function, i.e., its level sets $\{(\Lambda,\alpha)\colon I(\Lambda,\alpha;t)\leq r\}$ are compact for any $r$.

It is well-known in the theory of large deviations (and easy to deduce from the LDP) that for many interesting sets $A\subset \Ncal\times \Mcal $ one also has that $\P_N(({\rm Mi}^{\ssup N}, {\rm Ma}^{\ssup N})\in A)=\e^{-N \inf_A I(\cdot;t)(1+o(1))}$, for example for sets $A$ that are equal to the closure of their open kernel. There are choices of such sets that give the precise exponential rates of interesting  events, for instance the event that there are a given number of components larger than $Na$, for some $a>0$, or that a given component size appears with a certain least density, or that a given positive percentage of vertices are contained in components of a given range of sizes (e.g., in $\{1,\dots,R\}$  or in $\{R,\dots,\eps N\}$ or in $\{\eps N,\dots,N\}$), and certainly all kinds of combinations of such events.

From our main result, the LDP in Theorem \ref{thm-LDP}, a number of other LDPs follow via the contraction principle, according which if a random variable satisfies an LDP, so does its image under a continuous transformation; see \cite{DZ10}. Let us begin with the component size distribution of the microscopic part.

\begin{cor}[LDP for microscopic component size statistics]\label{cor-LDP_mi}
As $N\to\infty$,  ${\rm Mi}^{\ssup N}$ satisfies an LDP with speed $N$ and rate function $\Ical_{\rm Mi}(\cdot;t)\colon \Ncal \to[0,\infty]$, given by
\begin{equation}\label{Micro_rate_f}
\Ical_{\rm Mi}(\Lambda;t)= \inf_{\alpha\in\Mcal}I(\Lambda,\alpha;t)= I_{\rm Mi}(\Lambda;t)
 - (1-c_{\Lambda})\left(\log\frac{1-\e^{(c_{\Lambda}-1)t}} {1-c_{\Lambda}} - \frac{c_{\Lambda}t}{2}\right).
\end{equation}
\end{cor}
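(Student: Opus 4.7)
The plan is to apply the contraction principle to Theorem~\ref{thm-LDP}. The coordinate projection $\pi\colon\Ncal\times\Mcal\to\Ncal$, $(\Lambda,\alpha)\mapsto\Lambda$, is continuous between compact spaces, so it immediately yields an LDP for $\Mi^{\ssup N}$ with good rate function $\Ical_{\rm Mi}(\Lambda;t)=\inf_{\alpha\in\Mcal}I(\Lambda,\alpha;t)$. What remains is the closed-form evaluation of this infimum asserted in \eqref{Micro_rate_f}. Since $I_{\rm Mi}(\Lambda;t)$ does not involve $\alpha$, and writing $s:=1-c_\Lambda\in[0,1]$, the task reduces to minimizing $I_{\rm Ma}(\alpha;t)+(s-c_\alpha)\bigl(\frac t2-\log t\bigr)$ over $\alpha\in\Mcal$ subject to $c_\alpha\leq s$. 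Expanding $\alpha=\sum_j\delta_{a_j}$ with $a_j\in(0,1]$ and regrouping, the $\alpha$-dependent part equals $\sum_j h(a_j)$, where
$$
h(x):=x\log\frac{xt}{1-\e^{-tx}}-\frac{t}{2}x^2,\qquad h(0):=0,
$$
the continuous extension being justified by the Taylor expansion $h(x)=-\frac{t^2x^3}{24}+O(x^4)$ at the origin.

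Next I would show that the minimum is attained at the single atom $\alpha=\delta_s$ (and at $\alpha=0$ in the degenerate case $s=0$, the only feasible choice then). Once established, substitution gives $I_{\rm Ma}(\delta_s;t)=s\log\frac{s}{1-\e^{-ts}}+\frac t2 s(1-s)$, which rewritten with $s=1-c_\Lambda$ is precisely the correction term in \eqref{Micro_rate_f}. The minimization follows from two properties of $h$ on $[0,1]$: (i) $h$ is strictly concave, and (ii) $h$ is strictly decreasing. Given these, (i) together with $h(0)=0$ yields the chord subadditivity $h(a)+h(b)\geq h(a+b)$ for $a,b,a+b\in[0,1]$, hence $\sum_j h(a_j)\geq h(\sum_j a_j)$ for any admissible collection (for infinite collections this passes to the limit since $h(x)=O(x^3)$ makes $\sum_j h(a_j)$ absolutely convergent and $h$ is continuous at every partial sum in $[0,s]$); then (ii) and $\sum_j a_j\leq s$ force $h(\sum_j a_j)\geq h(s)$, with equality at $\alpha=\delta_s$.

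The analytic heart of the argument is the concavity (i). A direct computation with the substitution $u:=ta$ gives
$$
h''(a)=\frac{t}{u(\e^u-1)^2}\,\psi(u),\qquad \psi(u):=(1-u)\,\e^{2u}+u^2\,\e^u-2\,\e^u+u+1.
$$
One verifies $\psi(0)=\psi'(0)=0$ and computes $\psi''(u)=u\,\e^u\,(u+4-4\e^u)$, which is strictly negative for $u>0$ by the elementary inequality $\e^u>1+u$ (so that $4\e^u>4+4u>4+u$). Integrating twice from $0$ then yields $\psi(u)<0$ on $(0,\infty)$, hence $h''(a)<0$ on $(0,1]$. Property (ii) follows at once from $h'(0)=0$ (a by-product of the same Taylor expansion) combined with $h''<0$. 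The main obstacle is thus this concavity verification, which after the bookkeeping above reduces to the elementary inequality $\e^u>1+u$.
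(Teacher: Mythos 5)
Your proposal is correct: the contraction-principle step is exactly the paper's, and your evaluation of $\inf_{\alpha}I(\Lambda,\alpha;t)$ reaches the right answer, but by a genuinely different route than the paper's Lemma \ref{lem_min_alpha}. The paper minimizes in two stages: for a fixed macroscopic total mass $c=c_\alpha$ it writes $I_{\rm Ma}(\alpha;t)=\int x f_t(x)\,\alpha(\d x)$ with $f_t(x)=\log\frac{x}{1-\e^{-tx}}+\frac t2(1-x)$, uses that every atom of a point measure with first moment $c$ is at most $c$ together with $f_t'\leq 0$ to conclude that $\delta_c$ is optimal at fixed mass, and then minimizes the resulting one-variable function $g_t(c)=cf_t(c)+(1-c_\Lambda-c)(\frac t2-\log t)$ over $c\in[0,1-c_\Lambda]$, showing $g_t'<0$ so that $c=1-c_\Lambda$ wins; both monotonicity claims reduce to first-derivative estimates. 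You instead absorb the mesoscopic penalty into a single per-atom functional $h(x)=x\log\frac{xt}{1-\e^{-tx}}-\frac t2x^2$ and settle both questions (one atom is best, and it should carry all of the available mass $1-c_\Lambda$) in one stroke via strict concavity of $h$ on $[0,1]$ (giving subadditivity, since $h(0)=0$) plus strict monotone decrease (from $h'(0)=0$ and $h''<0$). I checked your computations: the expression $h''(a)=\frac{t}{u(\e^u-1)^2}\psi(u)$ with $u=ta$, the identity $\psi''(u)=u\e^u(u+4-4\e^u)$, and the expansion $h(x)=-\frac{t^2x^3}{24}+O(x^4)$ are all correct, and your passage from finitely many to countably many atoms is justified as you indicate. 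The trade-off: your argument is more compact and reduces the analytic heart to the single inequality $\e^u>1+u$, and strictness even yields uniqueness of the minimizer $\delta_{1-c_\Lambda}$; the price is a second-derivative computation that is heavier than the paper's first-derivative checks, and you lose the intermediate statement (optimality of $\delta_c$ at each fixed macroscopic mass $c$) which the paper reuses later, e.g.\ in Lemma \ref{Lem-mesoLDP} and Theorem \ref{thm-phasetrans}. One small phrasing point: $I_{\rm Ma}(\alpha;t)+(s-c_\alpha)(\frac t2-\log t)$ equals $\sum_j h(a_j)$ only up to the additive constant $s(\frac t2-\log t)$; this is harmless since the constant is $\alpha$-independent and you evaluate the optimum by substituting $\delta_s$ into $I$ itself, but it is worth stating explicitly.
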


The first equality comes from the application of the contraction principle; while the second equality is purely analytical and it is checked in Lemma \ref{lem_min_alpha}. There it is seen that, given any $\Lambda\in \Ncal$, it is always optimal to have all the remaining mass $1-c_\Lambda$ in one single macroscopic component.

In the same way one can investigate the macroscopic part of the system.

\begin{cor}[LDP for macroscopic particles]\label{cor-LDP_ma}
As $N\to\infty$,  ${\rm Ma}^{\ssup N}$ satisfies an LDP with speed $N$ and rate function $\Ical_{\rm Ma}(\cdot;t)\colon \Mcal \to[0,\infty]$, given by
\begin{equation}\label{macro_rate_f}
 \begin{aligned}
\Ical_{\rm Ma}(\alpha;t)&= 
\inf_{\Lambda\in\Ncal}I(\Lambda,\alpha;t)\\
&= I_{\rm Ma}(\alpha;t)+(1-c_{\alpha})\Big(\frac t2-\log t\Big)+C_{\alpha,t}\Big (\log (tC_{\alpha,t})-\frac t2 C_{\alpha,t}\Big),
\end{aligned}
\end{equation}
where $C_{\alpha,t}= (1-c_{\alpha})\wedge \frac 1t$ (recall $c_\alpha= \int_{0}^1x\,\alpha(\d x)$).
\end{cor}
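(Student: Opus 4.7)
The first equality is immediate from the contraction principle: the projection $(\Lambda,\alpha) \mapsto \alpha$ is continuous from $\Ncal \times \Mcal$ to $\Mcal$, and $I(\cdot,\cdot;t)$ is a good rate function (since $\Ncal \times \Mcal$ is compact, as noted below Theorem~\ref{thm-LDP}). For the analytical second equality, collect the $\Lambda$-dependent terms of $I(\Lambda,\alpha;t)$ into
$$G(\Lambda;t) := I_{\rm Mi}(\Lambda;t) - c_\Lambda\bigl(\tfrac{t}{2} - \log t\bigr) = \sum_{k=1}^{\infty} \lambda_k \log\frac{k!\,t\lambda_k}{\e\, k^{k-2}} + c_\Lambda,$$
so that $I(\Lambda,\alpha;t) = I_{\rm Ma}(\alpha;t) + (1-c_\alpha)(\tfrac{t}{2} - \log t) + G(\Lambda;t)$ on the feasible set $c_\Lambda \leq 1 - c_\alpha$. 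The task reduces to showing
$$\inf\bigl\{G(\Lambda;t) : \Lambda \in \Ncal,\ c_\Lambda \leq 1 - c_\alpha\bigr\} = C_{\alpha,t}\bigl(\log(tC_{\alpha,t}) - \tfrac{t}{2}C_{\alpha,t}\bigr).$$

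First I would freeze $c_\Lambda = c$ and minimize $G$ under this linear constraint by Lagrange multipliers. As the remark after Theorem~\ref{thm-LDP} suggests, $G$ is (up to an additive term linear in $c$) a relative entropy of the sequence $(k\lambda_k)_k$ against the Borel distribution ${\rm Bo}_\mu(k) = \e^{-\mu k}(\mu k)^{k-1}/k!$ with $\mu := tc$. The Euler--Lagrange equation accordingly produces the unique candidate minimizer
$$\lambda_k^{(c)} = \frac{k^{k-2}(\mu \e^{-\mu})^k}{t\,k!}, \qquad \mu = tc,$$
which, via the tree-function identity $T(z\e^{-z}) = z$ for $z \in [0,1]$, satisfies $c_{\Lambda^{(c)}} = c$ precisely for $c \in [0, 1/t]$. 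Substituting back and applying the standard consequence $\sum_k k^{k-2}(\mu \e^{-\mu})^k/k! = \mu - \mu^2/2$ of the tree function, I obtain
$$\inf_{c_\Lambda = c} G(\Lambda;t) = c\log(tc) - \tfrac{t}{2}c^2 =: g(c), \qquad c \in [0, 1/t].$$

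For $c > 1/t$ the Lagrangian candidate exits $\Ncal$, so a separate approximation argument is needed. I propose to perturb the critical profile $\lambda^{(1/t)}$ by one extra atom $\lambda_K := (c - 1/t)/K$ at a diverging size $K$; a Stirling expansion of $\log(K!/(\e\,K^{K-2}))$ shows that the additional contribution to $G$ equals $-(c - 1/t) + o(1)$, which is exactly cancelled by the increment in the $c_\Lambda$ term, so $G \to g(1/t) = -1/(2t)$ as $K \to \infty$, and hence $\inf_{c_\Lambda = c} G(\Lambda;t) = -1/(2t)$ for all $c > 1/t$. Since $g'(c) = \log(tc) + 1 - tc \leq 0$ (with equality only at $tc = 1$), $g$ is non-increasing on $[0, 1/t]$, so the minimum of $c \mapsto \inf_{c_\Lambda=c} G(\Lambda;t)$ over $[0, 1-c_\alpha]$ is attained at $c = (1 - c_\alpha) \wedge (1/t) = C_{\alpha,t}$, yielding the claimed formula. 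The main obstacle is precisely the regime $c > 1/t$: the Borel-type Lagrangian solution is infeasible, and one must verify that excess microscopic mass can ``leak'' into arbitrarily large sizes (hence into mesoscopic components in the limit) without driving $G$ below $-1/(2t)$, a feature parallel to the supercritical behaviour of the Erd\H{o}s--R\'enyi graph itself.
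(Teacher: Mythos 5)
Your route is in substance the paper's own (Lemma~\ref{lem_min_lambda} combined with the contraction principle): the same reduction to minimizing the $\Lambda$-part at frozen total mass $c$, the same Borel-type Euler--Lagrange minimizer on $c\in[0,1/t]$ with value $g(c)=c\log(tc)-\tfrac t2c^2$, the same leak-to-large-sizes construction beyond $1/t$, and the same monotonicity of $g$. The one genuine gap is exactly the step you flag as the ``main obstacle'' and then do not carry out: you assert $\inf_{c_\Lambda=c}G(\Lambda;t)=-1/(2t)$ for $c>1/t$, but your diverging-atom construction only gives the upper bound ``$\leq$''. The matching lower bound $G(\Lambda;t)\geq -1/(2t)$ for \emph{all} $\Lambda\in\Ncal$ is indispensable: without it, whenever $1-c_\alpha>1/t$ (the supercritical window in which $C_{\alpha,t}=1/t$ is the relevant case) the infimum over the feasible set $\{c_\Lambda\leq 1-c_\alpha\}$ could a priori lie strictly below the claimed value. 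Your entropy identification with reference ${\rm Bo}_{tc}$ does not yield this bound, because that reference depends on $c$ and for $tc>1$ there is no feasible critical point, so convexity alone does not pin down the infimum.

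The paper closes this in one line by using a $c$-\emph{independent} reference: with $p_k=\frac{k^{k-2}\e^{-k}}{t\,k!}$ (so that $tkp_k={\rm Bo}_1(k)$ and $\sum_kp_k=\frac1{2t}$, since $\E[1/X]=\frac12$ for $X\sim{\rm Bo}_1$, \cite{AP98}), one has $G(\Lambda;t)+\frac1{2t}=\widehat I(\Lambda)=\sum_k\big(\lambda_k\log\frac{\lambda_k}{p_k}+p_k-\lambda_k\big)$ as in \eqref{def:entropy}, and every summand is nonnegative; hence $G\geq-\frac1{2t}$ on all of $\Ncal$, which is precisely the missing inequality. With that observation your argument is complete; note also that for this corollary the diverging-atom upper bound is not even needed, since the constraint is the inequality $c_\Lambda\leq 1-c_\alpha$ and one may simply take $\Lambda=\Lambda^*(1/t;t)$ when $1-c_\alpha>1/t$. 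A smaller point of rigour: that your Lagrangian candidate really is the minimizer on $\Ncal(c)$ for $c\leq 1/t$ should be stated via the strict convexity of $\widehat I$ (the paper additionally rules out minimizers with a vanishing coordinate before invoking the vanishing of directional derivatives).
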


Again, only the second equality has to be checked; this is done in Lemma \ref{lem_min_lambda}. In contrast with the result above, here the optimal configuration $\Lambda^*$ depends on $\alpha\in\Mcal$, most heavily it depends on whether $1-c_\alpha\leq \frac 1t$ or not. Indeed, if $1-c_\alpha\leq \frac 1t$, then $c_{\Lambda^*}=1-c_{\alpha}$ (and no mesoscopic part arises). However, if $1-c_{\alpha}>\frac 1t$, then $c_{\Lambda^*}=\frac 1t$, and a non-trivial mesoscopic mass arises in the minimization; see Theorem~\ref{thm-phasetrans}. This peculiarity shows already a key difference between the cases $t\leq 1$ and $t>1$. Indeed, if $t\leq 1$ one cannot have any macroscopic mass distribution $\alpha$ such that $1-c_\alpha>\frac 1t$ and no difference in the minimizing strategy of the system can be seen. This is a first way to see the phase transition at $t=1$ from analytic properties of the rate function.

Now we come to the mesoscopic part of the particle configuration. This part comprises particle sizes on all the scales between finite and $O(N)$ and it seems unreasonable to consider an empirical measure for it. Instead, we consider only the total mass of this mesoscopic part. Let $\eps>0$ and $R\in\mathbb{N}$ be two auxiliary parameters, then we define the $(R,\eps)$-mesoscopic total mass as
\begin{equation}
\overline{\rm Me}^{\ssup N}_{R,\eps}=\frac 1N\sum_{i\colon R< S^{\ssup N}_i< \eps N}S^{\ssup N}_i.
\end{equation}
This is the number of vertices that are contained in components with a size between $R$ and $\eps N$.
The mesoscopic total mass in a strict sense arises after taking the limits $N\to\infty$, followed by $\eps\downarrow 0$ and $R\to\infty$, but this does not define a proper random variable.
However, it is possible to formulate an LDP in the $N\rightarrow \infty$ limit and then to study the rate function, $\Jcal_{\rm Me}^{\ssup{R,\eps}}$, as $\eps\downarrow 0$ and $R\to\infty$.
Additionally, the proof of Theorem~\ref{thm-LDP} shows that it is possible to define a coupled mesoscopic total mass $\overline{\rm Me}^{\ssup N}_{R_N,\eps_N}$, for any diverging sequence $R_N$ and vanishing sequence $\eps_N$. This is a well-defined random variable, it satisfies an LDP with speed $N$ and the rate function is the limit of $\Jcal_{\rm Me}^{\ssup{R,\eps}}$ when $\epsilon\searrow 0$ and $R\nearrow\infty$.

\begin{cor}[LDP for mesoscopic mass]\label{cor-LDP_me}
\begin{enumerate}
 \item For any $R\in\N$ and $\eps\in(0,1)$, as $N\to\infty$, $\overline{\rm Me}^{\ssup N}_{R,\eps}$ satisfies an LDP with speed $N$ and rate function $c\mapsto \Jcal_{\rm Me}^{\ssup{R,\eps}}(c;t)$, where
$$
\Jcal_{\rm Me}^{\ssup{R,\eps}}(c;t)=\inf\Big\{I(\Lambda,\alpha;t)\colon \sum_{k=1}^R k\lambda_k + \int_{\eps}^1x\,\alpha(\d x)=1-c\Big\}.
$$

\item For any $R_N\in\N$ and $\eps_N\in(0,1)$ such that $1\ll R_N< \eps_N N\ll N$, and $|\frac 1{\eps_N}\log\eps_N|\leq o(N)$, the coupled mesoscopic total mass $\overline{\rm Me}^{\ssup N}_{R_N,\eps_N}$ satisfies an LDP with speed $N$ and rate function
\begin{equation}\label{Jmesodef}
\begin{aligned}
\Jcal_{\rm Me}(c;t)&=\lim_{R\to\infty,\eps\downarrow 0}\Jcal_{\rm Me}^{\ssup{R,\eps}}(c;t)\\
&=(1-c)\Big ( \log(1-c)t-\frac{(1-c)t}{2}\Big)+\frac t2-\log t.
\end{aligned}
\end{equation}
The function $\Jcal_{\rm Me}(c;t)$ is strictly increasing in $c$, its minimum over $[0,1]$ is $\Jcal_{\rm Me}(0;t)=0$.
\end{enumerate}
\end{cor}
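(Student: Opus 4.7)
For part (1), my plan is to apply the contraction principle to Theorem~\ref{thm-LDP}. The quantity
\[
\overline{\rm Me}^{\ssup N}_{R,\eps}=1-\sum_{k=1}^R k\,\Mi^{\ssup N}_k-\int_{\eps}^1 x\,\Ma^{\ssup N}(\d x)
\]
is a functional of $(\Mi^{\ssup N},\Ma^{\ssup N})$ that is continuous on $\Ncal\times\Mcal$ away from measures $\alpha$ with an atom at the cut-off $\eps$. Sandwiching the indicator $\mathds{1}_{[\eps,1]}$ between continuous compactly supported test functions and letting the approximation error vanish yields matching upper and lower LDP bounds, so that the contraction principle applies and delivers the stated variational rate function $\Jcal_{\rm Me}^{\ssup{R,\eps}}$.

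For the coupled version in part (2)(i), I would revisit the proof of Theorem~\ref{thm-LDP} and verify that the combinatorial estimates there are uniform enough to permit the cutoffs themselves to depend on $N$, provided $1\ll R_N<\eps_N N\ll N$. This will yield the LDP for $\overline{\rm Me}^{\ssup N}_{R_N,\eps_N}$ with rate function $\Jcal_{\rm Me}(c;t)=\lim_{R\to\infty,\,\eps\downarrow 0}\Jcal_{\rm Me}^{\ssup{R,\eps}}(c;t)$.

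The explicit formula is the analytic core of the proof. As $R\to\infty$ and $\eps\downarrow 0$, the constraint in the variational formula relaxes to $c_\Lambda+c_\alpha=1-c$, so that
\[
\Jcal_{\rm Me}(c;t)=\inf_{c_\Lambda+c_\alpha=1-c}\bigl[I_{\rm Mi}(\Lambda;t)+I_{\rm Ma}(\alpha;t)\bigr]+c\bigl(\tfrac t2-\log t\bigr).
\]
The inner minima at fixed $c_\Lambda=a$ and $c_\alpha=b$ decouple. As in the proof of Corollary~\ref{cor-LDP_mi}, a Lagrange-multiplier computation yields a Borel-type minimizer $\lambda_k^*\propto k^{k-2}e^{\mu k}/k!$ for $a\le 1/t$, with value $I_{\rm Mi}^*(a)=a\log a+ at/2-a^2t/2$. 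Subadditivity of $\phi_t(x):=x\log\frac{x}{1-e^{-tx}}+\tfrac{tx(1-x)}{2}$ (readable from the expansion $\phi_t(x)=x(\tfrac t2-\log t)-\tfrac{t^2x^3}{24}+O(x^4)$) gives that the macroscopic minimizer at fixed $c_\alpha=b$ is a single atom at $b$, with value $I_{\rm Ma}^*(b)=\phi_t(b)$. The remaining scalar minimization in $a+b=1-c$ admits the natural critical-point ansatz $a=(1-c)e^{-tb}$, which is equivalent to $\theta:=b/(1-c)$ solving the giant-component equation $\theta=1-e^{-t(1-c)\theta}$ at effective parameter $t(1-c)$. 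Using that the non-trivial root satisfies $t(1-c)(1-\theta)<1$ (so that $a\le 1/t$ is automatic and the Borel formula applies), together with the identity $e^{tb}-1=b/a$, algebraic simplification collapses $I_{\rm Mi}^*(a)+\phi_t(b)$ to $(1-c)\log(1-c)+\tfrac{tc(1-c)}{2}$; adding $c(\tfrac t2-\log t)$ gives the closed form in \eqref{Jmesodef}.

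The monotonicity is a one-variable computation: $\Jcal_{\rm Me}'(c)=s-1-\log s$ with $s=t(1-c)$, which is non-negative for all $s>0$ and vanishes only at $s=1$; hence $\Jcal_{\rm Me}(\cdot;t)$ is strictly increasing on $[0,1]$, and $\Jcal_{\rm Me}(0;t)=0$ by direct substitution. The main obstacle I anticipate is the joint variational step: confirming that the ansatz indeed produces the global minimum (and not merely a critical point), and tracking the Borel/non-Borel threshold $a=1/t$ carefully in the supercritical regime; the coupled LDP of (2)(i) also requires patient bookkeeping with the underlying estimates in the proof of Theorem~\ref{thm-LDP}.
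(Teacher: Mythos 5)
Your route is essentially the paper's. Part (1) is obtained by the contraction principle, exactly as in the paper (your extra care about the cut-off at $\eps$ is fine, since elements of $\Mcal$ are integer point measures and an atom exactly at $\eps$ is the only problematic configuration). For part (2) the paper likewise re-uses the machinery of the proof of Theorem~\ref{thm-LDP}: the upper bound of Lemma~\ref{lem-upper-bound} is already uniform in the $N$-dependent cutoffs, and the lower bound is a recovery sequence as in \eqref{recovery_seq} with all mesoscopic mass placed at the single scale $R_N$; this is carried out in Lemma~\ref{Lem-mesoLDP}. Your fixed-point ansatz $a=(1-c)\e^{-tb}$ is literally the paper's equation $x^*=(1-c)\e^{-t(1-c-x^*)}$, your collapse of $I_{\rm Mi}^*(a)+\phi_t(b)$ to $(1-c)\log(1-c)+\frac t2 c(1-c)$ is correct (using $\frac{b}{1-\e^{-tb}}=1-c$ at the critical point), and so is the monotonicity computation $\frac{\d}{\d c}\Jcal_{\rm Me}(c;t)=s-1-\log s$ with $s=t(1-c)$.

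Three places where your sketch is thinner than what is needed. First, the ``subadditivity of $\phi_t$ readable from the expansion'' is not an argument: a Taylor expansion at $0$ says nothing about masses of order one; the paper's Lemma~\ref{lem_min_alpha} proves the required single-atom optimality by showing that $f_t(x)=\log\frac{x}{1-\e^{-tx}}+\frac t2(1-x)$ is decreasing on $(0,\infty)$ (via $\e^{2y}<\frac{1+y}{1-y}$ for $y\in[0,1)$), which is exactly the global statement you need. Second, your derivation of the closed form covers only $t(1-c)>1$, where the giant-component equation has a nontrivial root; for $t(1-c)\le 1$ the scalar minimum sits at the boundary $b=0$, $a=1-c$, and one must check separately that $I_{\rm Mi}^*(1-c)=(1-c)\log(1-c)+\frac t2 c(1-c)$ plus $c(\frac t2-\log t)$ yields the same formula (it does) — the corollary claims the formula for all $t$ and $c$. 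Third, the step you yourself defer — global minimality and the behaviour across the threshold $a=1/t$, where for $a>1/t$ the inner infimum over $\Ncal(a)$ is no longer attained and equals $a(\frac t2-\log t)-\frac 1{2t}$ rather than the Borel value — is precisely where the paper works, via Lemmas~\ref{lem_min_alpha} and~\ref{lem_min_lambda} giving the piecewise expression \eqref{opt_meso}, then minimizing over $x\in[0,1-c]$. Relatedly, your identification of $\lim_{R\to\infty,\eps\downarrow 0}\Jcal_{\rm Me}^{\ssup{R,\eps}}(c;t)$ with the relaxed-constraint infimum $\inf\{I(\Lambda,\alpha;t)\colon c_\Lambda+c_\alpha=1-c\}$ is asserted rather than argued; the paper bypasses it by proving the LDP limit for the coupled mass directly in Lemma~\ref{Lem-mesoLDP} (upper bound from the uniform estimates, lower bound from the recovery sequence built from $\Lambda^*(x^*;t)$ and $\delta_{1-c-x^*}$) and identifying that limit with the constrained infimum and the closed form.
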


Corollary \ref{cor-LDP_me} part (1) is a simple consequence of the contraction principle, as the maps $\Lambda\mapsto \sum_{k=1}^R k\lambda_k$ and $\alpha\mapsto  \int_{\eps}^1x\,\alpha(\d x)$ are continuous. Assertion (2) follows as a byproduct of our proof of Theorem \ref{thm-LDP} in Section~\ref{sec-ProofLDP}. 

Hence, $\Jcal_{\rm Me}(\cdot;t)$ can rightfully be called the rate function for the mesoscopic total mass. Since it is positive everywhere outside $0$, we have the immediate consequence that the probability that any positive percentage of the vertices lies in mesoscopic components decays exponentially towards zero. This implies the convergence in probability of $\overline \Me_{R_N,\eps_N}^{\ssup N}$ towards zero with exponential decay of the probability of a decay by any positive amount. Interestingly, taking $R_N+1=\eps_N N\in\N$, we see that already just one mesoscopic size alone satisfies the same LDP as the entire $(R,\eps)$-mesoscopic total mass in the limit $R\to\infty$, $\eps\downarrow 0$. 
The condition $|\frac 1{\eps_N}\log\eps_N|\leq o(N)$ is not only a technical one, but implies that $\log N\ll \eps_N N$, taking care of the well-known fact that there are many clusters of size $O(\log N)$ in  the sparse Erd\H{o}s-R\'enyi random graph  that stem from an extreme-value statistics effect of the microscopic clusters. 

\subsection{Our results: the phase transition in the light of the LDP}\label{sec-phasetrans}

We now proceed with the study of the main phenomenon in the sparse Erd\H{o}s-R\'enyi random graph: the phase transition  of the emergence of a giant component. We will deduce it from our large-deviations rate functions from Section \ref{sec-Results}. The LDPs and the identification of their strict minimiser(s) lead to laws of large numbers for a number of random quantities. Indeed, it is a standard and simple fact from large-deviations theory that a random variable that satisfies an LDP with a rate function that contains precisely one minimizer converges in probability to that minimizer. We will exploit this fact to deduce laws of large numbers. As before, the parameter $t\in(0,\infty)$ will play the decisive role; recall that the connection probability $\frac{1}Nt_N$ of the graph $\Gcal(N,\frac{1}Nt_N)$ was picked as $t_N=t+o(1)$ as $N\to\infty$.

Consider the following functions of the total masses of the microscopic and macroscopic particles respectively:
\begin{equation}\label{JMIandJMa}
\Jcal_{\rm Mi}(c;t)=\inf_{\Lambda\in\Ncal(c)}\Ical_{\rm Mi}(\Lambda;t)\qquad\mbox{and}\qquad
\Jcal_{\rm Ma}(c;t)
=\inf_{\alpha\in\Mcal_{\mathbb{N}_0}((0,1];c)}\Ical_{\rm Ma}(\alpha;t),
\end{equation}
where $c\in[0,1]$. 
These two functions are not entirely analogous to $\Jcal_{\rm Me}(c;t)$ as rate functions for the total masses of the micro and the macro part, because the total masses both of ${\rm Mi}^{\ssup N}$ and ${\rm Ma}^{\ssup N}$ are equal to one. This is consistent with the fact that the contraction principle cannot be applied to total masses, as they are not continuous functions of the measures. However, they contain rather interesting information about the phase transition. 

\begin{theorem}[Microscopic total mass phase transition]\label{thm-phasetrans}
\begin{enumerate}
\item For any $c\in[0,1]$,
\begin{equation}\label{min_lambda}
\begin{aligned}
 \Jcal_{\rm Mi}(c;t)&=t c +(1-c)\log \frac{1-c}{1-\e^{t(c-1)}}+
\begin{cases}
c\log c- t c^2&\text{for }c<\frac 1t,\\
-\frac 1{2t}-\frac {t}{2}c^2-c\log t&\text{for }c\geq\frac 1t.
\end{cases}
\end{aligned}
\end{equation}
Moreover, $\Jcal_{\rm Mi}(c;t)=\Jcal_{\rm Ma}(1-c;t)$.
\item For $c\in(0,1]$, the minimum of $\Ncal(c)\ni\Lambda\mapsto \Ical_{\rm Mi}(\Lambda;t)$ is attained precisely at $\Lambda^*(c;t)\in\Ncal(c)$ given by
\begin{equation}\label{lambdamin}
\lambda^*_k(c;t)=\frac{k^{k-2}c^k t^{k-1}\e^{-ctk}}{k!},\qquad k\in\N,
\end{equation}
and the minimum of the function $c\mapsto  \Jcal_{\rm Mi}(c;t)$ is attained precisely at $c=1$ with value $\Jcal_{\rm Mi}(1;t)=0$. Therefore the infimum 
\begin{equation}
\inf_{(\Lambda,\alpha)\in\Ncal\times\Mcal}I(\Lambda,\alpha;t)\label{total_inf}
\end{equation}
is attained at $(\Lambda,\alpha)=(\Lambda^*(1;t),\mathbf{0})$, where $\mathbf{0}=(0,0,\dots)$.

\item For $t\in(1,\infty)$, the minimum of the function $c\mapsto \Jcal_{\rm Mi}(c;t)$ is attained at $c=\beta_t$ where  $\beta_t\in(0,t)$ is the smallest positive solution to 
\begin{equation}\label{equilibrium}
\log \beta_t=t\beta_t-t.
\end{equation}
The infimum in \eqref{total_inf} is attained precisely at $(\Lambda,\alpha)=(\Lambda^*(\beta_t;t),(1-\beta_t,0,0,\dots))$.
\end{enumerate}
\end{theorem}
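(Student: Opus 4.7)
The plan is to reduce the theorem to one-dimensional calculus in the total micro-mass $c = c_\Lambda$. The first observation, which is established in the proof of Corollary~\ref{cor-LDP_mi} (equivalently Lemma~\ref{lem_min_alpha}), is that for each fixed $\Lambda \in \Ncal(c)$ the infimum of $I(\Lambda,\alpha;t)$ over $\alpha \in \Mcal$ is attained at $\alpha = \delta_{1-c}$, so that
$$
\Jcal_{\rm Mi}(c;t) \;=\; \inf_{\Lambda\in\Ncal(c)}\,I_{\rm Mi}(\Lambda;t)\,+\,I_{\rm Ma}(\delta_{1-c};t),
$$
and only the strictly convex functional $I_{\rm Mi}$ depends on $\Lambda$. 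I would then minimize $I_{\rm Mi}$ on $\Ncal(c)$ by Lagrange multipliers, or equivalently by the Gibbs inequality for the relative entropy of $(k\lambda_k)_k$ against the Borel distribution already noted after Theorem~\ref{thm-LDP}. The unique critical point has the form $\lambda_k = k^{k-2}e^{\mu k}/(k!\,t)$, and the constraint $\sum_k k\lambda_k = c$ becomes $T(e^\mu) = ct$, where $T(x) = \sum_{k\geq 1}k^{k-1}x^k/k!$ is the tree function, satisfying $T(x) = xe^{T(x)}$ and $T(1/e) = 1$.

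For $ct \leq 1$, this yields $e^\mu = cte^{-ct}$ and hence $\Lambda^*(c;t)$ in \eqref{lambdamin}; substituting back and using the identity $\sum_k k^{k-2}x^k/k! = T(x) - T(x)^2/2$ (easily verified via $xf'(x) = T(x)$) produces the first branch of \eqref{min_lambda} after adding $I_{\rm Ma}(\delta_{1-c};t)$. For $ct > 1$ the Lagrangian family has maximal total mass $T(1/e)/t = 1/t$, so the constraint cannot be attained in it; the infimum is then realized by the critical configuration at $\mu = -1$ together with a single atom $\lambda_{k_n} = (c-1/t)/k_n$ pushed to a scale $k_n\to\infty$, whose Stirling-asymptotic contribution $\lambda_{k_n}\log(k_n!\,t\lambda_{k_n}/(ek_n^{k_n-2}))\to -(c-1/t)$ combines with the affine term $c(1+t/2-\log t)$ and $I_{\rm Ma}(\delta_{1-c};t)$ to give the second branch of \eqref{min_lambda}. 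The symmetry $\Jcal_{\rm Mi}(c;t)=\Jcal_{\rm Ma}(1-c;t)$ is then verified by running the analogous reduction from the macroscopic side via Lemma~\ref{lem_min_lambda}: the optimal $\alpha \in \Mcal_{\N_0}((0,1];1-c)$ at fixed total macro-mass $1-c$ is again the single Dirac $\delta_{1-c}$, after which the two closed-form expressions are matched branch by branch.

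For part (3), I would differentiate \eqref{min_lambda} in the branch $c \in (0,1/t)$. After algebraic simplification the vanishing of the derivative at a candidate $c = \beta$ reduces to $e^{t(\beta-1)} = \beta$, i.e.\ $\log\beta = t\beta - t$; one checks directly that $c=1$ is always a root and gives $\Jcal_{\rm Mi}(1;t) = 0$. For $t \leq 1$ this is the only root in $(0,1]$ and $\Jcal_{\rm Mi}(\cdot;t) \geq 0$ with equality only at $c=1$, proving (2). For $t > 1$ the equation $\log\beta = t\beta - t$ has a second, smaller root $\beta_t$ in $(0,1)$; in fact $\beta_t \in (0,1/t)$, since $f(\beta):=\beta - e^{t(\beta-1)}$ is negative near $0$ and strictly positive at $\beta = 1/t$ (the latter being equivalent to the elementary inequality $t - 1 > \log t$ for $t > 1$). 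A second-derivative / sign analysis then identifies $\beta_t$ as the global minimizer and $c=1$ as a local maximum of $\Jcal_{\rm Mi}(\cdot;t)$. Tracing back the optimal $\alpha = \delta_{1-c^*}$ gives the stated minimizers of \eqref{total_inf}: $(\Lambda^*(1;t),\mathbf{0})$ in the subcritical case and $(\Lambda^*(\beta_t;t),(1-\beta_t,0,0,\dots))$ in the supercritical case.

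The main obstacle is the regime $c > 1/t$ in part (1): the Lagrangian family reaches total mass at most $1/t$, so one cannot exhibit the minimizer in closed form but must rigorously construct an infimising sequence, justify the Stirling asymptotics for the escaping atom, and exclude the possibility that several smaller escaping atoms outperform a single one---the last point being the analogue, for the degenerate regime, of the convex/atomic analysis that underpins Lemma~\ref{lem_min_lambda}.
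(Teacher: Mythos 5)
Your route is essentially the paper's: Lemma~\ref{lem_min_alpha} reduces everything to a single macroscopic Dirac $\delta_{1-c}$, the constrained minimisation over $\Ncal(c)$ yields the Borel-type family \eqref{lambdamin} for $ct\le 1$ and an escaping-mass sequence for $c>\frac1t$, and parts (2)--(3) become one-dimensional calculus on \eqref{min_lambda} leading to $\log\beta=t\beta-t$. However, the point you yourself flag as ``the main obstacle'' is a genuine gap in your plan, and it is closed in the paper by a tool you mention but do not deploy: rewriting $\Ical_{\rm Mi}$ as in \eqref{eq_micro_entropy}, i.e.\ $\Ical_{\rm Mi}(\Lambda;t)=\widehat I(\Lambda)+(\text{explicit function of }c_\Lambda\text{ only})$ with $\widehat I$ a relative entropy of unnormalised measures, hence $\widehat I\ge 0$ for \emph{every} $\Lambda\in\Ncal(c)$. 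This makes the lower bound in the regime $c>\frac1t$ configuration-free (there is nothing to exclude about several escaping atoms), and only the upper bound requires the single escaping atom with the Stirling computation you sketch; this is exactly Lemma~\ref{lem_min_lambda}. As written, your proposal proves only the upper bound for $c>\frac1t$.

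In parts (2)--(3) there are concrete errors and a glossed step. First, $\Jcal_{\rm Mi}(1;t)=0$ holds only for $t\le 1$; for $t>1$ the point $c=1$ lies in the second branch of \eqref{min_lambda} and $\Jcal_{\rm Mi}(1;t)=\frac t2-\frac1{2t}-\log t>0$ (it is $\Jcal_{\rm Mi}(\beta_t;t)$ that vanishes), which also contradicts your later claim that $c=1$ is a local maximum for $t>1$. Second, for $t<1$ the derivative of \eqref{min_lambda} at $c=1$ equals $1-t+\log t<0$, so $c=1$ is a boundary minimum and not a critical point, even though it solves $\log c=tc-t$; your ``only root of the critical equation'' argument does not by itself give strict positivity of $\Jcal_{\rm Mi}$ on $(0,1)$. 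Third, ``after algebraic simplification the vanishing of the derivative reduces to $\e^{t(\beta-1)}=\beta$'' hides the real work: the derivative is $F(q(c))-F(tc)$ with $F(x)=x-\log x$ and $q(c)=\frac{t(1-c)\e^{-t(1-c)}}{1-\e^{-t(1-c)}}$, and since $F$ is not injective on $(0,\infty)$ one needs $tc\le 1$ and $q(c)\le 1$ (the paper's inequalities \eqref{inequalities}) before $F(q(c))=F(tc)$ forces $q(c)=tc$, i.e.\ $c=\e^{-t(1-c)}$; the same monotonicity of $q$ and strict convexity of $F$ are what give uniqueness of $\beta_t$ on $[0,\frac1t]$, the strict increase on $(\frac1t,1]$ for $t>1$, and the strict decrease on $[0,1]$ for $t\le 1$. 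Supplying these facts turns your sketch into the argument of Section~\ref{sec-Proofphasetrans}.
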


The proof is found in Section \ref{sec-Proofphasetrans}.
 
The two different cases in \eqref{min_lambda} refer to the cases that the first minimum in \eqref{JMIandJMa} is attained or not. Indeed, for 
$c\leq \frac 1t$, the function $\Ical_{\rm Mi}(\cdot; t)$ is minimized in an optimal $\Lambda^*$ with $c_{\Lambda^*}=c$. However, for $c>\frac 1t$ this is not possible, but only minimizing sequences can be found that achieve a total mass of $\frac 1t$ in the microscopic measure and displace the remaining mass $c-\frac 1t$ to the mesoscopic part. This shows that the phase transition originates from the impossibility of picking an optimal microscopic configuration $\Lambda^*$ if its total mass $c_{\Lambda^*}$ is required to be too large; the threshold being $1/t$. If this is exceeded, then a minimization can be done only with the help of some non-trivial mesoscopic part. As we mentioned above, the macroscopic configuration is always minimized in one single giant component.

The same effect is seen in $\Ical_{\rm Ma}(c;t)$, where first an optimization over $\Lambda$ with $c_\Lambda\leq 1-c$ is performed, and such a balance between microscopic and mesoscopic mass can pop out if $1-c$ is large enough. Subsequently, optimizing over $\Mcal_{\N_0}((0,1];c)$ is straightforward.  The equality $\Jcal_{\rm Mi}(c;t)=\Jcal_{\rm Ma}(1-c;t)$ follows from this.

In Theorem \ref{thm-phasetrans}(2) and (3) we see the different behaviour for subcritical, respectively supercritical $t$ in terms of the microscopic configuration. Note that this configuration is actually given by 
\[ 
k\lambda^*_k(c;t)=c{\rm Bo}_{ct}(k),
\] 
where ${\rm Bo}_\mu$ is the Borel distribution with parameter $\mu\in[0,1]$. We see that such an optimal $\Lambda^*(c;t)$ cannot be found if $c>\frac 1t$, and this is an admissible total mass only when $t>1$, marking the threshold between subcritical and supercritical regime (otherwise, there is no relevant case distinction as to the value of $c$). In this way, the Borel distribution appears as the natural minimizer of the microscopic part of the rate function. 

In earlier work (see \cite{Pit90}), the appearance of the Borel distribution in this context came from the observation that ${\rm Bo}_\mu$ is the distribution of the total progeny of a Galton--Watson tree with offspring that is Poisson-distributed with parameter $\mu$. The characterisation of the emerging cluster-size distribution $\Lambda^*$ was based on an approximation of the connected subgraphs by such trees and counting the total number of trees of a given size in the graph. This approximation argument was extended to a large-deviation setting in \cite{BorCap15}, see Section \ref{subsect:ERRG}.

Theorem \ref{thm-phasetrans} characterises the well-known phase transition of the ermergence of a giant component at $t=1$ in terms of a natural notion that is familiar to statistical physics: as a non-analyticity of the limiting free energy for the total mass of the microscopic configuration, which is equal to the infimum of $\Jcal_\Mi(\cdot;t)$. Indeed, this function is zero in $[0,1]$, but positive in $(1,\infty)$.

Another characterisation of this phase transition is in terms of a law of large numbers. Indeed, combining Theorem~\ref{thm-phasetrans} with the LDP in Theorem~\ref{thm-LDP} one has
$$
\big({\rm Mi}^{\ssup N},{\rm Ma}^{\ssup N}\big)\overset{N\to\infty}{\Longrightarrow}
\begin{cases}
        (\Lambda^*(1;t),\mathbf{0})&\mbox{if }t\leq 1,\\
        (\Lambda^*(\beta_t;t),(1-\beta_t,0,\dots))&\mbox{if }t\geq 1.
\end{cases}
$$
In words, this means that, for any $k\in\N$,  $\frac 1N$ times the number of components of size $k$ converges to $\lambda_k^*(1;t)$ in the sub-critical regime  and to $\lambda_k^*(\beta_t,t)$ in the supercritical regime, while there is no macroscopic component in the first regime and there is precisely one macroscopic cluster of cardinality $\sim N(1-\beta_t)$ in the second. All these statements are in the sense of convergence in probability, and the probability of a deviation by any positive amount decays even exponentially in $N$.

One also sees that the cut-off versions of the total masses, $\sum_{k=1}^R k {\rm Mi}_k^{\ssup N}$ and $\int_{[\eps,1]} x\, {\rm Ma}^{\ssup N}(\d x)$, converge towards the respective cut-off versions of the limits, and their limits as $R\to\infty$ and $\eps\downarrow 0$ are $(1,0)$ for $t\leq 1$ and $(\beta_t,1-\beta_t)$ for $t\geq1$.

\subsection{Related works on LDPs for Erd\H{o}s-R\'enyi graphs}\label{subsect:ERRG}
Despite the extensive literature on the Erd\H{o}s-R\'enyi graph, there are not many results about large deviations in the sparse regime. Here we summarize, to the best of our knowledge, the existing results and how they relate to our work.
 
Two LDPs for the size of the largest component and for the number of isolated vertices  have been derived in \cite{OCon98}. These are two quantities that are obviously functionals of our measures $\Ma^{\ssup N}$, respectively of $\Mi^{\ssup N}$. Indeed, the largest component is the mass of the largest atom of $\Ma^{\ssup N}$, and the number of isolated vertices  is equal to $N$ times $\Mi_1^{\ssup N}$. Both these two functionals are continuous, such that the contraction principle applies. The approach of \cite{OCon98} is a simplified version of our comprehensive approach for the joint distribution of all the component sizes, and consequently it leads to formulas for the rate functions that are contractions of our rate function, which is straightforward to see. This explains also the remark made about the lack of convexity of the rate function in~\cite{OCon98}. Indeed, contraction often ruins convexity and contracted rate functions are rarely convex. Hence, our work includes the results of \cite{OCon98}.

A route that is inspired by statistical physics is taken in \cite{EnMoRe04}, where the distribution of the random graph is tilted with a parameter $q>1$ raised to the power of the number of components, properly normalized. The analysis of the free energy of the corresponding partition sum is carried out there. Via the well-known Laplace dualism, the results is essentially equivalent to an LDP for the number of components. This functional is equal to the continuous functional $\Mi^{\ssup N}(\N)$ in our setting. The pecularity of  \cite{EnMoRe04} is that this model is put into relation with the $q$-state Potts model in the limit $q\downarrow 1$ via diagrammatic expansion techniques. In particular, they derive limiting formulas for the size of the giant component, the degree distributions inside and outside the giant component, and the distribution of small component sizes.

We already mentioned that registering each component as a {\em subgraph} (rather than only as its size) would give \emph{a priori} a much more detailed description, at least for any fixed $N$.  However, in the limit as $N\to\infty$, in the LDP regime that we consider, only few subgraph configurations survive: the microscopic components survive only as spanning trees, and only those macroscopic components survive that have an excess of edges of order $\Uptheta(N)$. The first has been carried out in \cite{BorCap15}, the second in \cite{Puh05}.

Indeed, the macroscopic part of our LDP is covered in \cite{Puh05}. The author gives an LDP for the joint distribution of the total number of components, the sequence of the sizes of macroscopic ones, and the sequence of  corresponding numbers of the excess edges appended with zeros. Therefore the contraction of this LDP to the total number of components and the macroscopic sizes (see \cite[Corollary 2.1]{Puh05})  is equal to the contraction of our LDP from Theorem \ref{thm-LDP} to the LDP for $\left(\sum_{k}\Mi_k^{\ssup N},\Ma^{\ssup N}\right)$. The same is of course true, when considering exclusively macroscopic sizes (compare \cite[Corollary 2.2]{Puh05} with Corollary \ref{cor-LDP_ma}). The approach in \cite{Puh05} goes along a very different route, involving recursive formulas for the graphs $\Gcal(N,\frac 1N t_N)$ if $N$ increases, and consequently the form of the rate function derived there is pretty different from ours; it involves an additional minimization procedure. It would require some work to analytically check that it is identical to ours.

In \cite{BorCap15}, an LDP for the empirical measure of all the components rooted at the vertices, is derived with an explicit rate function. The topology used there comes from a distance that looks only at intersections of graphs with bounded sets, so it can detect only microscopic components. In this way it is contained in our results. However, \cite{BorCap15} considers the components as graphs, not only as sizes, and gets therefore a much more detailed picture. Moreover, the object described in \cite{BorCap15} is a size-biased version of our microscopic measure, since we are counting components of a certain size, while they consider the component containing each vertex and therefore counting a certain component proportionally to the number of vertices it contains. 
Hence, the LDP of \cite{BorCap15} contains the microscopic part of our LDP (Corollary \ref{cor-LDP_mi}) via the contraction principle, but a certain normalization has to be performed to actually compare the two objects.  However, let us notice that \cite[Theorem 1.8]{BorCap15} shows that the rate function takes the form of a sort of relative entropy with respect to a Galton Watson tree with Poisson offspring distribution (plus additional constants). This form is shown also in our contracted rate function from Corollary \ref{cor-LDP_mi}, which in \eqref{eq_micro_entropy} we rewrite in terms of a relative entropy with respect to a distribution related to the Borel distribution (which is the distribution of the total progeny of precisely a Galton Watson tree with Poisson offspring). Also in this case, one sees that the Borel distribution appears as a size-biased version of our reference distribution. 


Under assumptions that imply that the connection probability of the Erd\H{o}s-R\'enyi graph $\Gcal(N,p)$ satisfies $p \gg N^{-\frac12}$, recent progress has been made on the upper tails of sub-graph counts \cite{Chat16,Aug18, CooDem18}.

In the case of dense graphs, that is, for $\Gcal(N,p)$ with fixed $p\in(0,1)$, there is a complete treatment thanks to Chatterjee and Varadhan~\cite{ChaVar11}, see~\cite{Chat16b} for an overview. This regime is rather different from the sparse regime, since a proper formulation of the relevant limiting objects requires a abstract setting evolving around the notion of a graphon.

\subsection{Application to coagulation models}\label{sec-Coagulation}

Our interest in this research came from the desire to understand dynamical particle systems with coagulation in the large-system limit. It turned out that one of the most prominent (and most simple) models, the {\it Marcus--Lushnikov model of coagulation}, see \cite{Mar68,Gil72,Lus78b}, admits a one-to-one correspondence to the component sizes of the Erd\H{o}s-R\'enyi random graph that we study in this paper.  This coagulation process is a continuous-time Markov process of vectors of particle masses $(S_i^{\ssup N}(t))_{i\in\{1,\dots,n_t\}}\in(\N_0)^{n_t}$ at time $t\in[0,\infty)$, arranged in descending order, precisely as in \eqref{componentsizes}, where $n_t$ is the number of particles at time $t$. This process is specified by the initial configuration, which we take in the monodisperse case, i.e., $S_i^{\ssup N}(0)=1$ for all $i=1,\dots,N=n_0$, and by the transition mechanism, which is given in terms of a symmetric, non-negative {\it coagulation kernel} $K_N\colon \N\times\N\to[0,\infty)$. That is, we start with $N$ particles of unit mass at time 0, and in the course of the process, each (unordered) pair of particles with respective masses $m,\widetilde m\in\N$ coagulate to a particle of mass $m+\widetilde m$ with rate $K_N(m,\widetilde m)$, independently of all the other pairs of particles. 

The important special case of the {\em multiplicative kernel}, $K_N(m,\widetilde m)=m\widetilde m /N$ has the two interesting features: (1) it can be mapped onto the Erd\H{o}s-R\'enyi random graph that we study in this paper, and (2) it exhibits an interesting {\it gelation phase transition} in the limit $N\to\infty$ at time $t=1$, because a {\em gel}, i.e., a particle of macroscopic size, appears. Indeed, for this process, 
it turned out in the review \cite{Ald99} that, for fixed $t\in[0,\infty)$, the distribution of the family $(S_i^{\ssup N})_{i\in\{1,\dots,n\}}$ of the component sizes of $\Gcal(N,\frac 1N t_N)$ defined in \eqref{componentsizes} with $\frac 1Nt_N=1-\e^{-t/N}$ is identical to the family $(S_i^{\ssup N}(t))_{i\in\{1,\dots,n_t\}}$ of particle masses in the multiplicative coalescent at time $t$.
This correspondence was not mentioned in \cite{Buf90}, but was discussed one year later in \cite{Buf91}, which highlights the connection between gelation in the coagulation process and the phase transition given by the formation of a giant connected component in the Erd\H{o}s-R\'enyi random graph \cite{Erd60}.

Hence, the results of this paper also recover the gelation phenomenon in rather explicit terms through a LDP in terms of the microscopic, mesoscopic and macroscopic parts, in the same way as we explained in the above sections for the Erd\H{o}s-R\'enyi random graph.

Smoluchowski introduced a (deterministic) ODE model for the concentrations of coagulating particles in the course of his work on Brownian motion \cite{Smol16b}. Indeed, it is reasonable to assume that $l_k(t) = \lim_{N\rightarrow \infty}\frac 1N\# \{ \text{particles of size } k \text{ at time } t\}$ exists under suitable conditions, see \cite{Nor99, LaMi04, MeNo14}. These limits satisfy
\begin{equation}\label{e:smol}
\frac{\d }{\d t}l_k(t) = \frac12 \sum_{\substack{m, \widetilde m \colon \\ m + \widetilde m=k}}l_m(t) l_{\widetilde{m}}(t) K(m,\widetilde{m})
-l_k(t)\sum_{m}l_m(t) K(k,m),
\qquad k\in\N,
\end{equation}
where $K(m,\widetilde m) = \lim_{N\rightarrow\infty} N K_N(m,\widetilde m)$ is the limiting coagulation kernel (in our case, $K(m,\widetilde m)=m\widetilde m$). This is the famous {\em Smoluchowski equation}. Intuitively, the positive terms on the right-hand side of \eqref{e:smol} take into account that the fraction of particles of mass $k$ increases if a particle of mass $m$ and one of mass $\widetilde m$ (with $m+\widetilde m=k$) merge and this happens with rate $K(m,\widetilde m)$. On the other hand, the negative term describes that a particle of mass $k$ can coagulate with particles of any size $m$ with rate $K(k, m)$ and this is why it involves an infinite sum (over all $m\in \N$).
One can check for $t\leq 1$ that $\Lambda^*(1;t)$  appearing in Theorem \ref{thm-phasetrans} is the exact solution of \eqref{e:smol}, the Smoluchowski equation, also given in \cite[Table 2]{Ald99}. As a consequence, the above mentioned gelation phase transition as well as the solution of the Smoluchowski ODE are also clear from our results in Sections \ref{sec-Results} and \ref{sec-phasetrans} and receive therefore a new interpretation in terms of combinatorial structures.

In the light of the process character of the Marcus--Lushnikov coagulation model, it will be desirable to derive a pathwise version of the LDP of Theorem \ref{thm-LDP}. This will require a version of that theorem which starts from an arbitrary configuration rather than from $S_i^{\ssup N}(0)=1$. This may also be interesting for the time-dependent version of the Erd\H{o}s-R\'enyi graph $(\Gcal(N,1-\e^{t/N}))_{t\in[0,\infty)}$, but not as natural as for the Marcus--Lushnikov model. Another aspect that makes it particularly interesting for coagulation models is the availability of alternative methods in the spirit of Wentzell--Freidlin theory to derive pathwise LDPs for coagulation models, see \cite{Mie17}. Let us also mention that, in the renowned paper \cite{Ald97}, time is expanded around the critical value $t=1$, and the mesoscopic components of the graph are compared to a stochastic process known as the \emph{multiplicative coalescent}. Although we allow for fluctuations around $t$ (our LDP holds for any sequence $t_N\sim t$), we cannot capture this regime around $t=1$, since we expect a LDP for mesoscopic particles to hold on a different scale. 
Another natural direction of our future research is an extension to the case of an {\it inhomogeneous} Erd\H{o}s-R\'enyi graph as introduced in \cite{BoJaRi07}. We will defer future work to these questions.

\subsection{Comparison to Bose-Einstein condensation without interaction}\label{sec-BEC}

Our large-deviations approach to the Marcus--Lushnikov models shows remarkable similarities to another well-known phase transition in a non-spatial model, the non-interacting Bose gas. Here the situation is similar in that the gas can be conceived as a joint distribution of $N$ particles that are randomly grouped into smaller units, called {\em cycles}, which can become arbitrarily large. The natural question is then, under what circumstances do macroscopic cycles arise. An explicit answer in terms of a large-deviations analysis has been given in \cite{A08}, where the transition, the famous {\em Bose--Einstein Condensation (BEC)} in dimensions $d\geq 3$, is derived from the minimization of the rate function, in a way analogous to that in our Theorem~\ref{thm-phasetrans}. 
The two phase transitions differ in that the BEC transition is of \emph{saturation type}, while the gelation transition is not.

For the non-interacting  Bose gas in the thermodynamic limit at temperature $1/\beta\in(0,\infty)$ with particle density $\rho\in(0,\infty)$ the partition function is given by
$$
Z_{\Lambda_N}^{\ssup{\beta}}
=\sum_{(\ell_k)_{k\in\N}\in\N_0^{\N}\colon \sum_k k \ell_k=N}\prod_k\frac{N^{\ell_k}}{\ell_k!\,k^{\ell_k}}[\rho (4\pi\beta k)^{\frac d2}]^{-\ell_k},
$$
where $\Lambda_N$ is the centred box in $\R^d$ with volume $N/\rho$.  The free energy per particle is then 
$$
f(\beta,\rho)=\lim_{N\to\infty}\frac 1N\log Z_{\Lambda_N}^{\ssup{\beta}}=-\inf_{\Lambda\in\Ncal(\rho)}I(\Lambda),\qquad\mbox{where}\qquad
I(\Lambda)=\sum_k \lambda_k\log\frac{\lambda_k k}{(4\pi\beta k)^{\frac d2}\e}.
$$
For the Erd\H{o}s-R\'eny graph  $\Gcal(N,\frac 1N t_N)$, the  equivalent quantity is the rate function $\Ical_{\rm Mi}$ from \eqref{Micro_rate_f}.
The key difference between the rate functions is that  only $\Ical_{\rm Mi}$ contains terms in the total mass of microscopic components, $c_\Lambda$.
This reflects the fact that the giant component makes a significant contribution to the rate function in the graph  model, but the condensate in the non-interacting Bose gas does not.

The respective minimisers of $I_{\rm Mi}$ and $I$ are
$$
 k \lambda^{\ssup{\rm ML}}_k(c;t)=\frac 1t\,\frac{(c t\e^{-ct})^k}{k^{1-k}\,k!} \sim
   \frac{1}{ \sqrt{2\pi}t} \frac{\left(ct\e^{-ct+1}\right)^k}{ k^{3/2}}
\quad\text{and}\quad 
k \lambda^{\ssup{\rm BEC}}_k(\alpha;\beta)=\frac{1}{\rho(4\pi\beta)^{\frac d2}}\frac{\e^{-\alpha k}}{k^{\frac d2}},
$$
where $c$ and $\alpha$ control the values of $\sum_k k\lambda_k$.

The crucial parameters are $t$ for the graph model and the inverse temperature $\beta$ for the Bose gas.
Both models have a trivial upper bound for the total microscopic mass, $\sum_k k \lambda_k$, namely one.
One additional upper bound arises in each model from the optimisation of the rate function with respect to the $\lambda_k$, but these are not relevant, until $t$ respectively $\beta$ rises to its critical value.
For the graph model this bound is $1/t$, because $\sum_k \frac{(c t\e^{-ct})^k}{k^{1-k}\,k!} \leq 1$ for all $ct\in (0,\infty)$, and the summands take their maxima at $ct=1$, when they correspond to the Borel probability distribution with parameter 1.
For $ \Lambda^{\ssup{\rm BEC}}$ this bound is $\rho^{-1}(4\pi\beta)^{-d/2}\sum_k k^{-\frac d2}$.
At this point we see a difference between the two models, because the total microscopic mass in the Bose gas remains on this bound as $\beta$ rises further, while for the graph model it immediately drops strictly below the bound.
This explains why BEC is known as a saturation phase transition, but this description cannot be applied to gelation.

\section{Preparations for the proof of the LDP}\label{sec-distributions}

\noindent We consider the Erd\H{o}s-R\'enyi graph $\Gcal=\Gcal(N,p)$ under the corresponding probability measure $\P_{N,p}$. In Section \ref{sec-steps}, we derive  an explicit formula for the distribution of the empirical measure of the component sizes $S_i^{\ssup N}$ in terms of connectivity probabilities for (smaller) Erd\H{o}s-R\'enyi random graphs. Furthermore, we prepare in Section \ref{sec:asympt_prob} for the asymptotic analysis f or $p=\frac 1N t_N$ with $t_N=t+o(1)$ by recalling from \cite{Ste70} some estimates and asymptotics for this connectivity probability.

\subsection{The joint distribution of the component sizes}\label{sec-steps}

An important quantity is 
\begin{equation}\label{mudef}
\mu_{k}(p)=\P_{k,p}\big(\Gcal\text{ is connected}\big), \qquad k\in\N,p\in[0,1].
\end{equation} 
We will be concerned with this quantity for fixed $k$, but with connection probability $p=\frac{1}Nt_N$, in the limit $N\to\infty$.

We define the state space of the collection of component sizes as
\begin{equation}\label{state_space_N}
E_N=\Big\{(s_i)_{i\in\{1,\dots,n\}}\in\N_0^n\colon n\in\N, s_1\geq s_2\geq \dots\geq 0,\,\sum_{i=1}^n s_i=N\Big\}. 
\end{equation}
To each element $(s_i)_i$ of the state space $E_N$, 
we associate a unique element of the space 
\begin{equation}\label{state_space_2}
\Ncal_N=\Big\{\ell=(\ell_k)_k\in\N_0^\N\colon \sum_k k\ell_k=N\Big\},
\end{equation}
where for each $k$, $\ell_k$ is the number of indices $i$ such that $s_i=k$.  The map $(s_i)_i\mapsto \ell $ is a bijection and in the following we refer to configurations equally in terms of $(s_i)_i$ or $\ell$. 

We denote by $\Pcal_N$ the set of all partitions of $[N]=\{1,\dots,N\}$. We write $B_k(\pi)$ for the number of sets in $\pi\in \Pcal_N$ with cardinality $k$. 
Then we can describe the joint distribution of all the component sizes of $\Gcal(N,p)$ as follows.

\begin{lemma}For any $p\in[0,1]$, $N\in\N$ and every $(s_i)_i\in E_N$,
\begin{equation}\label{coagprob}
\P_{N,p}\big((S_i^{\ssup N})_i =(s_i)_i\big)=\#\{\pi\in\Pcal_N\colon B_k(\pi)=\ell_k\,\forall k\}\times\Big(\prod_{i}\mu_{s_i}(p)\Big)\times \Big(\prod_{i\not = j}(1-p)^{\frac 12 m_i\,m_j}\Big).
\end{equation}
\end{lemma}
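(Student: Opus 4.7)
The plan is to condition on the partition of $[N]$ induced by the connected components. For a fixed configuration $(s_i)_{i\in\{1,\dots,n\}}\in E_N$, I would first decompose the event $\{(S_i^{\ssup N})_i=(s_i)_i\}$ as a disjoint union over partitions $\pi\in\Pcal_N$ whose block cardinalities realise $(s_i)_i$, that is, $\pi$ has exactly $\ell_k$ blocks of size $k$ for every $k$. For each such $\pi$ the corresponding event is that the vertex partition induced by the connected components of $\Gcal(N,p)$ equals $\pi$ exactly.

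Next, for a fixed $\pi$ with blocks $B_1,\dots,B_n$ of sizes $s_1,\dots,s_n$, I would split this event into two independent sub-events, exploiting the product structure of the edge probabilities across disjoint edge sets:
\begin{itemize}
\item[(a)] for every block $B_i$, the induced subgraph of $\Gcal(N,p)$ on $B_i$ is connected;
\item[(b)] for every pair $i\neq j$, no edge of $\Gcal(N,p)$ connects a vertex of $B_i$ to a vertex of $B_j$.
\end{itemize}
These two events depend on disjoint sets of potential edges (intra-block edges for (a), inter-block edges for (b)), so by independence of edges in $\Gcal(N,p)$ their joint probability factorises. The probability of (a) factorises further over the blocks, each factor being $\mu_{s_i}(p)$ by definition \eqref{mudef} and by the fact that the induced subgraph on $B_i$ is itself a $\Gcal(s_i,p)$. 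The probability of (b) factorises over unordered pairs $\{i,j\}$: there are exactly $s_i s_j$ potential edges between $B_i$ and $B_j$ and each one must be absent, contributing $(1-p)^{s_is_j}$. Rewriting as a product over ordered pairs gives the exponent $\tfrac12 s_i s_j$.

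Finally I would observe that the resulting expression depends on $\pi$ only through the multiset of block sizes, and so summing over all admissible $\pi$ simply multiplies by the combinatorial factor $\#\{\pi\in\Pcal_N:B_k(\pi)=\ell_k\,\forall k\}$. Assembling the three factors yields \eqref{coagprob}. The only thing to be careful about is the bookkeeping between ordered and unordered pairs in (b), and the fact that (a) and (b) are genuinely independent — both are immediate once one observes that the underlying edge sets are disjoint, so there is no real obstacle beyond this accounting.
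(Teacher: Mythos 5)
Your proposal is correct and follows essentially the same route as the paper: decompose the event over partitions with the prescribed block-size statistics, use independence of the disjoint intra-block and inter-block edge sets to obtain the factors $\prod_i\mu_{s_i}(p)$ and $\prod_{i\neq j}(1-p)^{\frac12 s_is_j}$, and then multiply by the number of admissible partitions since the probability depends only on the block cardinalities. The paper phrases the factorisation by peeling off one component at a time rather than factorising all blocks simultaneously, but this is only a cosmetic difference in bookkeeping.
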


\begin{proof} A set $A\subset\{1,\dots,N\}$ of vertices is a connected component in the graph $\Gcal(N,p)$ if and only if (1) no bond between any vertex in $A$ and any vertex outside has been put, and (2) the subgraph formed out of the vertices in $A$ and all the bonds between any two vertices in $A$ is connected. This has probability $(1-p)^{|A|\,|A^{\rm c}|} \times \mu_{|A|}(p)$. Applying this reasoning to $A^{\rm c}$ and describing the next component, and iterating this argument, shows that the product of the two products on the right-hand side of \eqref{coagprob} is equal to the probability, for a given partition $\pi$ with $\ell_k$ sets of size $k$ for any $k$, that the components of $\Gcal(N,p)$ are precisely the sets of $\pi$. Since this probability depends only on the cardinalities, the counting term completes the formula.
\end{proof}

Now we rewrite the right-hand side of \eqref{coagprob} in terms of the empirical measure of $(s_i)_i$, i.e., of the numbers $\ell_k$ of indices $i$ such that $s_i=k$. Introduce the event
\begin{equation}\label{statisticsevent}
A_{N}(\ell)=\bigcap_{k\in\N}\{\#\{i\colon S_i^{\ssup N}=k\}=\ell_k\},\qquad\ell=(\ell_k)_{k\in\N}\in \N_0^\N.
\end{equation}

\begin{cor}\label{cor-probAN} For any $p\in[0,1]$, $N$ and any $\ell=(\ell_k)_k\in\N_0^\N$ satisfying $\sum_k k\ell_k=N$,
\begin{equation}\label{rewriteprobAN}
\P_{N,p}(A_{N}(\ell))=N!\prod_k\frac{\mu_{k}(p)^{\ell_k}(1-p)^{\frac 12 k(N-k)\ell_k}}{k!^{\ell_k} \, \ell_k!}.
\end{equation}
\end{cor}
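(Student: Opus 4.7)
The plan is to apply the preceding lemma to the unique decreasing tuple $(s_i)_{i\in\{1,\dots,n\}}\in E_N$ whose size statistics agree with $\ell$ (i.e., the tuple obtained by listing $k$ exactly $\ell_k$ times in decreasing order of $k$), and then to simplify each of the three factors in \eqref{coagprob} into the product form claimed in \eqref{rewriteprobAN}. Since the event $A_N(\ell)$ is literally the event $\{(S_i^{\ssup N})_i=(s_i)_i\}$ for this canonical $(s_i)_i$, the two probabilities coincide and the task is purely combinatorial.

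First, I would count partitions. The number $\#\{\pi\in\Pcal_N:B_k(\pi)=\ell_k\ \forall k\}$ equals the standard multinomial
\[
\frac{N!}{\prod_k (k!)^{\ell_k}\,\ell_k!},
\]
obtained by arranging $[N]$ in a line (contributing $N!$), cutting it into blocks of the prescribed sizes, then quotienting by the $\prod_k (k!)^{\ell_k}$ internal orderings and the $\prod_k \ell_k!$ permutations of blocks of equal size. Second, since exactly $\ell_k$ of the $s_i$'s equal $k$, the middle factor collapses:
\[
\prod_{i}\mu_{s_i}(p)=\prod_{k\in\N}\mu_k(p)^{\ell_k}.
\]

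Third, the exponent in the third factor of \eqref{coagprob} is
\[
\tfrac12\sum_{i\neq j}s_i s_j=\tfrac12\Bigl[\bigl(\textstyle\sum_i s_i\bigr)^2-\sum_i s_i^2\Bigr]=\tfrac12\Bigl[N^2-\sum_k k^2\ell_k\Bigr],
\]
while on the right-hand side of \eqref{rewriteprobAN} the corresponding exponent is
\[
\tfrac12\sum_k k(N-k)\ell_k=\tfrac{N}{2}\sum_k k\ell_k-\tfrac12\sum_k k^2\ell_k=\tfrac{N^2}{2}-\tfrac12\sum_k k^2\ell_k,
\]
using $\sum_k k\ell_k=N$. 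The two exponents agree, so $\prod_{i\neq j}(1-p)^{\frac12 s_is_j}=\prod_k(1-p)^{\frac12 k(N-k)\ell_k}$. Assembling the three factors yields \eqref{rewriteprobAN}. There is no genuine obstacle here — this corollary is pure bookkeeping — the only point that warrants care is the switch from the pairwise exponent $\sum_{i\neq j}s_is_j$ to the per-component exponent $\sum_k k(N-k)\ell_k$, which hinges on the identity $\sum_k k\ell_k = N$.
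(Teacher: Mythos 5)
Your argument is correct and follows essentially the same route as the paper: identify the partition count as $N!/\prod_k\bigl((k!)^{\ell_k}\ell_k!\bigr)$, collapse $\prod_i\mu_{s_i}(p)$ into $\prod_k\mu_k(p)^{\ell_k}$, and rewrite the pairwise exponent using $\sum_k k\ell_k=N$. The only cosmetic difference is that you compute the exponent globally via $\bigl(\sum_i s_i\bigr)^2-\sum_i s_i^2$ while the paper groups it per component as $\tfrac12 s_i(N-s_i)$, which is the same identity.
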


\begin{proof} Note that the last product on the right-hand side of \eqref{coagprob} can also be written as $\prod_i (1-p)^{\frac N2 m_i(N-m_i)}$. Hence, if $\ell_k$ is equal to the number of $i$ such that $s_i=k$ for any $k$, then the product of the last two products can be written as 
 $$
 \prod_k \Big(\mu_k(p))^{\ell_k}(1-p)^{\frac 1{2}k(N-k)\ell_k}\Big).
 $$
 
The counting term is easily identified as
$$
\#\{\pi\in\Pcal_N\colon \#\{A\in\pi\colon |A|=k\}=\ell_k\,\forall k\}=\frac{N!}{\prod_{k} (k!^{\ell_k}\, \ell_k!)}.
$$
Substituting ends the proof.
\end{proof}

(To avoid confusion, we note that there is a typographical error in Section~4.5 of \cite{Ald99}, where the factor of $\frac 12 $ is missing in the exponent of \eqref{rewriteprobAN}.)

\subsection{The probability of being connected} \label{sec:asympt_prob} 

Our analysis of \eqref{rewriteprobAN} will depend crucially on an analysis of $\mu_{k}(\frac 1N t_N)$.
The next two lemmas collect results from \cite[Lemma 1\&2, Theorem 1]{Ste70}.

\begin{lemma}[Bounds and asymptotics for $\mu_{k}(\frac 1N t_N)$, \cite{Ste70}] \label{prob_finite_size}
For any $p\in[0,1]$, $N\in\mathbb{N}$ and any $k\leq N$,
\begin{equation}\label{Upp_bound_k-1_edges}
(1-p)^{\smfrac {1}{2}(k-1)(k-2)}\leq \frac{\mu_{k}(p)}{k^{k-2}p^{k-1}}\leq 1.
\end{equation}
In particular, if $p=\frac 1N t_N$ with $t_N=t+o(1)$ and $k=o(\sqrt{N})$,
\begin{equation*}
\mu_{k}(\smfrac 1N t_N)=k^{k-2}\Big(\frac tN\Big)^{k-1}(1+o(1)),\qquad N\to\infty.
\end{equation*}
\end{lemma}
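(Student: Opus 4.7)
The plan is to obtain both bounds as immediate consequences of Cayley's formula, and then to read off the asymptotic by sandwiching the ratio between the two sides of \eqref{Upp_bound_k-1_edges}.

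For the upper bound I would use that a graph on $k$ labelled vertices is connected if and only if it contains some spanning tree as a subgraph. By Cayley's formula there are $k^{k-2}$ spanning trees on $[k]$, each of which appears as a subgraph of $\Gcal(k,p)$ with probability $p^{k-1}$. A union bound then yields
\[
\mu_k(p) = \P_{k,p}\Big(\bigcup_{T}\{T\subseteq\Gcal\}\Big) \leq k^{k-2}p^{k-1}.
\]
For the lower bound I would exploit that the events $\{\Gcal = T\}$, as $T$ ranges over the $k^{k-2}$ spanning trees, are pairwise \emph{disjoint} (prescribing $\Gcal$ to coincide with a given tree pins down the state of every one of the $\binom{k}{2}$ possible edges), each implies that $\Gcal$ is connected, and each has probability $p^{k-1}(1-p)^{\binom{k}{2}-(k-1)} = p^{k-1}(1-p)^{(k-1)(k-2)/2}$. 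Summing these disjoint contributions delivers the lower bound; because the events are disjoint, no inclusion--exclusion is required.

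For the asymptotic assertion I would substitute $p=t_N/N$ into \eqref{Upp_bound_k-1_edges} and verify that the lower envelope collapses to $1$. Using $\log(1-p)=-p(1+O(p))$,
\[
(1-p)^{(k-1)(k-2)/2} = \exp\!\Big({-}\tfrac{(k-1)(k-2)}{2}\cdot\tfrac{t_N}{N}(1+o(1))\Big) = 1+o(1),
\]
where the last equality uses precisely the hypothesis $k^2/N\to 0$. Combining this with $(t_N/N)^{k-1}=(t/N)^{k-1}(1+o(1))$ closes the sandwich and yields the claimed asymptotic.

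Honestly, none of the steps is difficult. The single conceptual point worth flagging is the realisation that the lower bound should come from tree-\emph{exactness} (disjoint events) rather than tree-\emph{containment} (overlapping events), which removes the need for any delicate inclusion--exclusion. The asymptotic step is then elementary calculus, and the growth condition $k=o(\sqrt N)$ is precisely what is needed to force the lower envelope $(1-p)^{\binom{k-1}{2}}$ up to $1$.
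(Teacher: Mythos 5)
Your proof is correct, but it follows a genuinely different route from the paper, which does not prove this lemma at all: both inequalities in \eqref{Upp_bound_k-1_edges} and the asymptotic are simply imported from Stepanov \cite{Ste70} (with the upper bound traced back, in the remark after the lemma, to \cite{Flo41b}). Your argument replaces the citation by the classical self-contained derivation via Cayley's formula: the union bound over the $k^{k-2}$ spanning trees gives the upper bound, and summing the pairwise disjoint events $\{\Gcal=T\}$, each of probability $p^{k-1}(1-p)^{\binom k2-(k-1)}=p^{k-1}(1-p)^{(k-1)(k-2)/2}$, gives the lower bound; the sandwich then yields the asymptotics exactly as you say, since $k=o(\sqrt N)$ forces $(1-t_N/N)^{(k-1)(k-2)/2}\to1$. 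What your approach buys is transparency and self-containedness at essentially no cost, which is pleasant given that the paper's large-deviation proofs only ever invoke the two-sided bound \eqref{Upp_bound_k-1_edges} rather than the refined asymptotics of \cite{Ste70}. One small caveat, which you inherit from the statement itself rather than introduce: the step $(t_N/N)^{k-1}=(t/N)^{k-1}(1+o(1))$ is automatic for fixed $k$ (the regime the paper announces before the lemma), but for $N$-dependent $k=o(\sqrt N)$ it additionally requires $k\,|t_N-t|\to0$, since $(t_N/t)^{k-1}$ need not tend to $1$ otherwise; it would be worth stating this interpretation explicitly.
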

The expression for the upper bound in \eqref{Upp_bound_k-1_edges} appears to be present (using somewhat applied chemical language) in~\cite[equation (5)]{Flo41b}.

The following is an alternative upper bound for $\mu_{k}(\frac 1N t_N)$, which will be required for macroscopic components, together with an asymptotic result for the connection probability in the so-called sparse case, where the bond probability is proportional to the inverse of the size of the graph.

\begin{lemma}[\cite{Ste70}]\label{mu_asy_gen}
For all $p\in[0,1]$ and $k\in\N$
\begin{equation*}
\mu_k(p) \leq  \big(1-\e^{k q}\big)^{k-1},
\end{equation*}
with $q=\log (1-p)$.
Moreover, for $\alpha\in(0,1)$ and $t\in(0,\infty)$ and a sequence $t_N=t+o(1)$, as $N\rightarrow \infty$,
\begin{equation}\label{eq:macro_asymp}
\mu_{\lfloor \alpha N\rfloor}(\smfrac 1Nt_N)=\Big(1-\frac{\alpha t}{\e^{\alpha t}-1}\Big)\big(1-\e^{-t\alpha}\big)^{\alpha N}(1+o(1)).
\end{equation}
\end{lemma}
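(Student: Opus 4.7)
Both assertions are proved in Stepanov~\cite{Ste70}; here I sketch the underlying approach.

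\emph{Upper bound.} The natural starting point is the recursion
\[
1 = \mu_k(p) + \sum_{j=1}^{k-1}\binom{k-1}{j-1}\mu_j(p)(1-p)^{j(k-j)},
\]
obtained by conditioning on the size $j$ of the connected component containing vertex~$1$: the factor $\binom{k-1}{j-1}$ counts the remaining $j-1$ vertices of that component, $\mu_j(p)$ asks that they form a connected subgraph, and $(1-p)^{j(k-j)}$ asks that no edge reaches the other $k-j$ vertices. I would then induct on $k$: substituting the inductive hypothesis into each term and manipulating using the monotonicity $1-(1-p)^j\leq 1-(1-p)^k$ for $j\leq k$ together with Bernoulli-type inequalities for $p\mapsto(1-p)^m$ yields the bound $(1-(1-p)^k)^{k-1}$ (recall $\e^{kq}=(1-p)^k$).

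\emph{Asymptotic.} With $k=\lfloor\alpha N\rfloor$ and $p=t_N/N$, the effective mean degree $kp\to\alpha t$ is fixed and $\mu_k(p)$ is exponentially small in $N$. I would again start from the recursion above and split its sum according to whether $j$ is \emph{small} ($j\leq R$) or \emph{large} ($j>R$). For each fixed $j$, Lemma~\ref{prob_finite_size} gives $\mu_j(p)\sim j^{j-2}(t/N)^{j-1}$, and a direct computation identifies the small-$j$ contribution as a partial sum related to the tree function $T(x)=\sum_{j\geq 1}j^{j-1}x^j/j!$ (which satisfies $T(x)=x\e^{T(x)}$, and for which one has the closed-form evaluation $T(\alpha t\e^{-\alpha t})=\alpha t$ in the subcritical range $\alpha t\leq 1$). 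The large-$j$ contribution is controlled using the first part of the present lemma and is subleading. Solving the recursion for $\mu_k(p)$ and tracking all orders yields the claimed asymptotic: $(1-\e^{-\alpha t})^{\alpha N}$ emerges as the leading exponential rate, and $1-\alpha t/(\e^{\alpha t}-1)=(\e^{\alpha t}-1-\alpha t)/(\e^{\alpha t}-1)$ as the subleading constant prefactor.

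\emph{Main obstacle.} The principal difficulty is exponential sensitivity: since $\mu_k(p)$ is itself exponentially small, any approximation must be carried out with sub-exponential accuracy, lest the constant prefactor (or even the exponential rate) be destroyed. This forces careful control of the error in Lemma~\ref{prob_finite_size} for each small $j$, a crude but adequate bound on the large-$j$ terms via the first part of the present lemma, and the iterated limit $N\to\infty$ followed by $R\to\infty$.
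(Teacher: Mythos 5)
First, note that the paper does not prove this lemma at all: both assertions are imported verbatim from Stepanov \cite{Ste70} (Lemmas 1--2 and Theorem 1 there), so a citation is all the paper offers, and your text is a reconstruction; as such it contains two genuine gaps. The component-size recursion you start from, $1=\mu_k(p)+\sum_{j=1}^{k-1}\binom{k-1}{j-1}\mu_j(p)(1-p)^{j(k-j)}$, is correct, but the induction as described runs the wrong way: inserting the inductive \emph{upper} bound $\mu_j(p)\le(1-(1-p)^j)^{j-1}$ into the terms of the sum enlarges the sum and therefore yields a \emph{lower} bound on $\mu_k(p)=1-\sum_{j<k}(\cdots)$, not the claimed upper bound. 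To close an induction through this recursion you would need matching lower bounds on the $\mu_j$ (or a different recursion or argument altogether), and nothing of the sort is indicated.

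The plan for the asymptotics \eqref{eq:macro_asymp} fails for a more basic reason: with $k=\lfloor\alpha N\rfloor$ and $p=t_N/N$, the recursion determines the exponentially small quantity $\mu_k(p)$ as $1$ minus a sum whose terms are of order one (the $j$-th term is the probability that vertex $1$ lies in a component of size $j$; for fixed $j$ it tends to a Borel-type probability, and for $\alpha t>1$ the terms with $j$ of order $N$ carry total mass bounded away from zero, so they are certainly not ``subleading''). Extracting $\mu_k(p)$ from this identity would require evaluating that sum with \emph{additive} error $o\big((1-\e^{-\alpha t})^{\alpha N}\big)$, i.e.\ exponentially small, whereas Lemma~\ref{prob_finite_size} provides only $(1+o(1))$ multiplicative control, and only for $j=o(\sqrt N)$, while the first part of the present lemma gives crude upper bounds on individual $\mu_j$ and cannot control the order-one mass carried by large $j$. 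So ``solving the recursion and tracking all orders'' is not a viable route to the sharp constant $1-\alpha t/(\e^{\alpha t}-1)$, nor even to the exponential rate; Stepanov's proof of the asymptotics rests on different and finer estimates, and within this paper the correct move is simply to quote \cite{Ste70}.
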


\section{Proof of the LDP}\label{sec-ProofLDP}

\noindent In this section we prove the main result of this paper, the large-deviations principle in Theorem \ref{thm-LDP}. Again, we fix the parameter $t\in(0,\infty)$ and a sequence $t_N=t+o(1)$ and consider the Erd\H{o}s-R\'enyi graph $\Gcal(N,\frac 1Nt_N)$ with probability measure $\P_{N,\frac 1Nt_N}$. 

Recall the topological remarks on the two state spaces $\Ncal$ and $\Mcal$ from Section \ref{sec-Micromacro}. The metrics $d$ on $\Ncal$ and  $D$ on $\Mcal$, defined by
\begin{equation}\label{metrics}
d(\Lambda,\widetilde \Lambda)=\sum_{k=1}^{\infty}2^{-k}\,|\lambda_k-\widetilde\lambda_k|\qquad\mbox{and}\qquad D(\alpha,\widetilde \alpha)=\sum_{i=1}^{\infty}2^{-i}|\alpha_i-\tilde\alpha_i|,
\end{equation}
induce the respective topologies of pointwise and vague convergence. We write $B_\delta(\Lambda)$, respectively $B_\rho(\alpha)$, for the closed $\delta$-ball around $\Lambda$, respectively for the closed $\rho$-ball around $\alpha$.
Since 
the rate function $I(\cdot;t)$ is lower semicontinuous in $\Ncal\times\Mcal$ and the space is compact, we know that it is a good rate function (i.e., its level sets are not only closed but also compact). Therefore, a weak LDP implies our main result, the LDP in Theorem~\ref{thm-LDP}, and  it will be sufficient to prove the following.

\begin{prop}\label{Prop-mainLDP}
For any $(\Lambda,\alpha)\in \Ncal\times \Mcal$,
\begin{equation}\label{keyprob}
\lim_{\delta,\rho\downarrow 0}\lim_{N\rightarrow \infty}\frac 1N \log
\P_{N,\frac 1Nt_N}\big({\rm Mi}^{\ssup N}\in B_\delta(\Lambda),\, {\rm Ma}^{\ssup N}\in B_\rho(\alpha)\big)= -I(\Lambda,\alpha;t).
\end{equation}
\end{prop}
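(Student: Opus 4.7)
\emph{Setup and per-configuration rate.} The plan is to decompose the event in \eqref{keyprob} as $\sum_\ell\P_{N,\frac 1Nt_N}(A_N(\ell))$ over configurations $\ell\in\Ncal_N$ compatible with $B_\delta(\Lambda)\times B_\rho(\alpha)$ via the explicit formula \eqref{rewriteprobAN}, splitting each $\ell$ into microscopic ($k\leq R$), mesoscopic ($R<k<\eps N$), and macroscopic ($k\geq \eps N$) pieces, with auxiliary cutoffs $R\in\N$, $\eps\in(0,1)$ sent to $\infty$ and $0$ only at the very end. Applying Stirling to $N!$ and to every $\ell_k!$ with $\ell_k$ large, inserting the sharp asymptotic $\mu_k(\frac 1Nt_N)=k^{k-2}(t/N)^{k-1}(1+o(1))$ from Lemma \ref{prob_finite_size} for fixed $k$, and the macroscopic asymptotic \eqref{eq:macro_asymp} for sizes $\approx\alpha_jN$, all $\log N$ contributions cancel thanks to the total-mass constraint $\sum_k k\ell_k=N$, leaving
\begin{equation*}
\tfrac 1N\log\P_{N,\frac 1Nt_N}(A_N(\ell))=-I_{\rm Mi}^{(R)}(\Lambda^\ell;t)-I_{\rm Ma}^{(\eps)}(\alpha^\ell;t)-c_{\rm me}^\ell\bigl(\tfrac t2-\log t\bigr)+o(1),
\end{equation*}
where $\Lambda^\ell,\alpha^\ell$ are the micro/macro traces of $\ell$, $c_{\rm me}^\ell$ is the mesoscopic total mass, and $I_{\rm Mi}^{(R)},I_{\rm Ma}^{(\eps)}$ are the natural truncations of \eqref{I_mi}--\eqref{Stdef}. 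Crucially, the mesoscopic contribution is $-(t/2-\log t)$ \emph{per unit meso mass, independently of how that mass is distributed across individual sizes}: this universal constant falls out of balancing $(1-p)^{k(N-k)/2}\approx\e^{-tk/2}$ against the leading prefactor $k^{k-2}(t/N)^{k-1}$ of $\mu_k(p)$ once the $\log N$-terms are absorbed via $\sum_k k\ell_k=N$.

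\emph{Upper and lower bounds.} For the upper bound I would use the universal inequality $\mu_k(p)\leq k^{k-2}p^{k-1}$ of Lemma \ref{prob_finite_size} (valid for \emph{all} $k$) together with \eqref{eq:macro_asymp}, so that the display above is valid as a ``$\leq$'' for every $\ell$. The number of admissible configurations is $\e^{o(N)}$: for fixed $R$ the micro part allows only $O(N^R)$ choices, and the meso/macro part is bounded by the integer partition function $p(N)=\e^{O(\sqrt N)}$. Hence the sum is dominated by its maximum, and taking $N\to\infty$, $R\to\infty$, $\eps\to 0$, $\delta,\rho\to 0$ (in this order, using lower semicontinuity of $I_{\rm Mi}$ and $I_{\rm Ma}$) yields $\limsup\leq-I(\Lambda,\alpha;t)$. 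For the matching lower bound I would exhibit one explicit configuration $\ell^\star$ in the ball: set $\ell^\star_k=\lfloor N\lambda_k\rfloor$ for $k\leq R$, place one component at $\lfloor\alpha_jN\rfloor$ for each atom $\alpha_j$ of a finite truncation of $\alpha$, and split the residual mass into $m_N$ equal mesoscopic pieces of size $k_N$ in the window $\log N\ll k_N\ll\sqrt N$. In this range the lower bound $\mu_k(p)\geq k^{k-2}p^{k-1}(1-p)^{(k-1)(k-2)/2}$ of Lemma \ref{prob_finite_size} agrees with its upper bound up to an exponential correction $\exp(-O(tk_N^2/N))=\exp(o(N/m_N))$, so a direct calculation gives $\frac 1N\log\P(A_N(\ell^\star))\geq -I(\Lambda,\alpha;t)+o(1)$ after $R\uparrow\infty$.

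\emph{Main obstacle.} The delicate point is the uniform handling of the mesoscopic part in the upper bound: one must show that \emph{no} arrangement of pieces of sizes in $(R,\eps N)$ with total mass $c_{\rm me}^\ell$ can beat the universal rate $-c_{\rm me}^\ell(t/2-\log t)$. After inserting the universal bound on $\mu_k$ and Stirling, this reduces to maximising the linear-in-$k$ functional $\sum_{R<k<\eps N}\ell_k\bigl[k(1+\log t-t/2)-2\log k\bigr]$ under the constraint $\sum k\ell_k=c_{\rm me}^\ell N$, plus combinatorial corrections from $\log\ell_k!$. The $2\log k$ and $\log\ell_k!$ pieces are $o(N)$ uniformly in $\ell$ once $R$ is large (using $k>R$ and the trivial bound $\ell_k\leq N/k$), and the leading linear term contributes exactly $c_{\rm me}^\ell(1+\log t-t/2)$ irrespective of the individual $\ell_k$, delivering the universal coefficient $t/2-\log t$ in the rate function. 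Together with the subexponential enumeration of meso configurations, this closes the upper bound.
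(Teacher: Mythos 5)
Your overall strategy (explicit formula \eqref{rewriteprobAN}, three-scale split with cutoffs, $\e^{o(N)}$ enumeration of configurations, recovery sequence for the lower bound) is the same as the paper's, but two steps of your upper bound do not work as written. First, the macroscopic part: the tree bound $\mu_k(p)\leq k^{k-2}p^{k-1}$ is far too lossy for $k=xN$. Combined with $(1-p)^{k(N-k)/2}$, Stirling and the $(N/\e)^{k\ell_k}$ allocation, it gives a per-component factor $\exp\{N(x\log t-\tfrac t2x(1-x))\}$, whereas the correct contribution is $\exp\{-N(x\log\tfrac{x}{1-\e^{-tx}}+\tfrac t2x(1-x))\}$; since $x\log\tfrac{x}{1-\e^{-tx}}>-x\log t$ for every $x>0$, the resulting upper bound has a strictly smaller rate than $I_{\rm Ma}$ and never matches the lower bound. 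Invoking \eqref{eq:macro_asymp} instead does not repair this, because that is a pointwise asymptotic at a fixed $\alpha$, while the upper bound must hold uniformly over all $\ell$ in the ball, whose macroscopic atoms range over all of $[\eps N,N]$. What is needed is the non-asymptotic bound $\mu_k(p)\leq(1-\e^{kq})^{k-1}$ of Lemma~\ref{mu_asy_gen}, which is exactly how the paper's Lemma~\ref{lem-upper-bound} treats $F_{\rm Ma}$ (together with a separate $\e^{o(N)}$ control of the ratio \eqref{smallterm}, which also does not appear in your sketch).

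Second, your resolution of the ``main obstacle'' (uniformity of the mesoscopic bound) contains an accounting error. The powers of $N$ do not fully cancel: per mesoscopic size $k$ one factor $N$ per component survives ($k\ell_k\log N$ from Stirling of $N!$ against $(k-1)\ell_k\log N$ from $\mu_k$), leaving $+\ell_k\log N$, and this term is absent from your linear functional. It is not $o(N)$ by itself: taking all mesoscopic mass in components of size $R+1$ gives $\sum_{\rm meso}\ell_k\log N\asymp \tfrac NR\log N\gg N$ for fixed $R$, and in the same example $\sum\log\ell_k!$ is of the same order, so your claim that the $2\log k$ and $\log\ell_k!$ corrections are individually $o(N)$ via $\ell_k\leq N/k$ is false. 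These terms must be combined into $\sum_k\ell_k\log\tfrac{N\e}{tk^2\ell_k}$ and controlled jointly, e.g.\ by the Jensen argument \eqref{jensen} (equivalently, by the fact that at most $O(\sqrt N)$ distinct sizes occur), which is precisely what the paper does; also note your linear coefficient $(1+\log t-t/2)$ is off by the constant $1$ from the correct $\log t-t/2$, so the universal constant $\tfrac t2-\log t$ does not come out of your display as stated. Finally, a smaller omission: for $c_\Lambda+c_\alpha>1$ the claimed limit is $-\infty$, and per-configuration estimates cannot give this; one must show (as in the paper's lemma following Lemma~\ref{lem-upper-bound}) that for small $\delta,\rho$ no $\ell\in\Ncal_N$ is compatible with the ball, using the mass constraint $\sum_k k\ell_k=N$.
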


We split the proof of Proposition \ref{Prop-mainLDP} in several lemmas and finish it at the end of Section \ref{sec-ProofLDP}. We start by bounding the cardinality of $\Ncal_N$.

\begin{lemma}\label{lem_card_N}
 Let $\Ncal_N$ be as defined in \eqref{state_space_2}, then 
\[|\Ncal_N|=\e^{o(N)},\qquad N \to\infty.
\]
\end{lemma}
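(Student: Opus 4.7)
The set $\Ncal_N$ is in bijection with the set of integer partitions of $N$: to each $\ell=(\ell_k)_{k\in\N}\in\Ncal_N$ one associates the partition in which the part $k$ occurs exactly $\ell_k$ times, and this correspondence is clearly one-to-one. Hence $|\Ncal_N|$ equals the classical partition function $p(N)$, and the claim reduces to the statement that $\log p(N)=o(N)$.

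The conclusion then follows at once from the Hardy--Ramanujan asymptotic
\begin{equation*}
p(N)\;\sim\;\frac{1}{4N\sqrt{3}}\exp\Big(\pi\sqrt{\tfrac{2N}{3}}\Big),\qquad N\to\infty,
\end{equation*}
which gives the far stronger bound $\log p(N)=O(\sqrt N)$, hence in particular $\log p(N)=o(N)$.

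For a self-contained derivation of the weaker (and fully sufficient) estimate $\log p(N)\leq C\sqrt N$, I would use the generating function identity
\begin{equation*}
\sum_{N\geq 0}p(N)s^N \;=\; \prod_{k\geq 1}\frac{1}{1-s^k},\qquad s\in(0,1),
\end{equation*}
combined with Chebyshev's trick: for every $s\in(0,1)$ one has $p(N)\leq s^{-N}\prod_{k\geq 1}(1-s^k)^{-1}$, so
\begin{equation*}
\log p(N)\;\leq\;-N\log s+\sum_{k\geq 1}\frac{s^k}{1-s^k},
\end{equation*}
using $-\log(1-x)\leq x/(1-x)$ for $x\in(0,1)$. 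Writing $s=\e^{-\lambda}$ and comparing the sum on the right with the standard asymptotics of the Dedekind-type series $\sum_{k\geq 1}(\e^{\lambda k}-1)^{-1}\sim \pi^2/(6\lambda)$ as $\lambda\downarrow 0$, then optimising in $\lambda$ (the optimum sits at $\lambda\asymp 1/\sqrt N$), yields the desired $O(\sqrt N)$ bound.

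There is no substantive obstacle here: the lemma is a classical combinatorial fact, included solely because later in the proof of Proposition~\ref{Prop-mainLDP} we need the subexponential growth of the number of admissible empirical configurations in order to absorb counting terms into the error factor $\e^{o(N)}$ when passing from individual profiles $\ell\in\Ncal_N$ to neighbourhoods of $(\Lambda,\alpha)$.
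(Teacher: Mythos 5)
Your proof is correct, and the identification is exactly right: an element $\ell=(\ell_k)_k$ of $\Ncal_N$ is nothing but the multiplicity vector of an integer partition of $N$, so $|\Ncal_N|=p(N)$ and the claim follows from $\log p(N)=O(\sqrt N)$, whether quoted from Hardy--Ramanujan or derived via your Chebyshev bound on the generating function $\prod_{k\geq1}(1-s^k)^{-1}$ with $s=\e^{-\lambda}$ and $\lambda\asymp N^{-1/2}$ (that computation is the standard one and checks out, giving $\log p(N)\leq \pi\sqrt{2N/3}(1+o(1))$). The paper takes a different, more hands-on route borrowed from its reference on the Bose gas: it first observes that the support $H(\ell)=\{k\colon\ell_k>0\}$ of any $\ell\in\Ncal_N$ has at most $2\sqrt N$ elements, because $N\geq\sum_{k\in H(\ell)}k\geq\frac12|H(\ell)|(|H(\ell)|-1)$, and then crudely bounds the number of configurations by summing over such supports and over compositions, arriving at $\sum_{h\leq 2\sqrt N}\binom{N}{h}\binom{N+h}{h}=\e^{O(\sqrt N\log N)}=\e^{o(N)}$. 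Your approach buys a sharper bound ($\e^{O(\sqrt N)}$ versus $\e^{O(\sqrt N\log N)}$) and connects the lemma to classical partition asymptotics, while the paper's argument is fully self-contained in a few lines and needs no generating functions; both are far more than enough for the purpose you correctly identify, namely absorbing the number of admissible profiles $\ell$ into the $\e^{o(N)}$ error in the proof of Proposition~\ref{Prop-mainLDP}.
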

\begin{proof}
The following is an argument  in \cite{A08}. For any $\ell\in\Ncal_N$, the set $H( \ell)=\{k\in\N\colon \ell_k>0\}$ has no more than $2\sqrt N$ elements, since
$$
N=\sum_{k\in H(\ell)}k\ell_k\geq \sum_{k\in H(\ell)}k\geq \sum_{k=1}^{|H(\ell)|} k=|H(\ell)|\frac 12(|H(\ell)|-1).
$$
Hence,
$$
\begin{aligned}
|\Ncal_N|&= \Big|\Big\{(\ell_k)_k\in\N_0^{\N}\colon \sum_k k\ell_k=N, |H(\ell)|\leq 2 \sqrt N\Big\}\Big|\\
&\leq \sum_{H\subset[N]\colon |H|\leq 2\sqrt N}\Big|\Big\{(\ell_k)_{k\in H}\in\N^{|H|}\colon \sum_{k\in H}k \ell_k=N\Big\}\Big|\\
&\leq \sum_{H\subset[N]\colon |H|\leq 2\sqrt N}\Big|\{(L_k)_{k\in H}\in\N^{|H|}\colon \sum_{k\in H}L_k=N\Big\}\Big|
\leq \sum_{h=1}^{\lfloor 2 \sqrt N\rfloor} \binom N{h}\binom {N+h} {h}\\
&=\e^{o(N)}.
\end{aligned}
$$
\end{proof}

Thanks to Lemma \ref{lem_card_N}, it will be sufficient to  get estimates on  $\P_{N,\frac 1N t_N}(A_{N}(\ell))$, for any $\ell\in \Ncal_N$ close enough to a fixed $(\Lambda,\alpha)\in \Ncal\times\Mcal$.
The strategy is to divide the terms in the product representation from Corollary \ref{cor-probAN} into three groups, which we refer to as micro-, meso- and macroscopic, because they take into account the contribution of, respectively, micro-, meso- and macro- components. 
 We fix two increasing sequences $R_N$ and $\eps_N N$ in $\N$ such that $R_N\nearrow \infty$, $\eps_N \downarrow 0$ and $R_N<\eps_N N$. We write
\begin{equation}\label{ProbANsplit}
\P_{N,\frac 1N t_N}(A_{N}(\ell))=N! \times F_{\rm Mi}(\ell)\times F_{\rm Me}(\ell)\times F_{\rm Ma}(\ell),
\end{equation}
where 
$$
F_{\rm Mi}(\ell)=\prod_{k=1}^{R_N} z_k(\ell),\qquad F_{\rm Me}(\ell)=\prod_{R_N<k\leq \eps_N N}z_k(\ell),\qquad F_{\rm Ma}(\ell)=\prod_{\eps_N N <k\leq N} z_k(\ell),
$$
and 
$$
z_k(\ell)=\frac{\mu_k(\frac 1N t_N)^{\ell_k}(1-\frac{t_N}N)^{\frac 1{2} k(N-k)\ell_k}}{k!^{\ell_k} \, \ell_k!}.
$$
Let us set 
\begin{equation}\label{cDef}
c_\mathrm{Mi}(\ell/N)=\frac 1N\sum_{k=1}^{R_N}k\ell_k,\qquad
c_\mathrm{Me}(\ell/N)=\frac 1N\sum_{R_N<k\leq\eps_N N} k\ell_k,\qquad 
c_\mathrm{Ma}(\ell/N)=\frac 1N\sum_{\eps_N N < k\leq N}k\ell_k.
\end{equation}
Note that the sum of these three terms is equal to one. For the factor $N!$, we use Stirling's formula $N!=(\frac N\e)^N\e^{o(N)}$ so that uniformly in $\ell \in \Ncal_N$
\begin{equation}\label{StirlingAppl}
N!= \left(\frac{N}{\e}\right)^{N c_{\rm Mi}(\ell/N)}\left(\frac{N}{\e}\right)^{N c_{\rm Me}(\ell/N)}\left(\frac{N}{\e}\right)^{N c_{\rm Ma}(\ell/N)}\,\e^{o(N)},\qquad N\to\infty.
\end{equation}
We will consider a cut-off version of the distances $d$ and $D$ introduced in \eqref{metrics}, as follows:
\begin{equation*}
d_R(\Lambda,\widetilde \Lambda)=\sum_{k=1}^{R}2^{-k}\,|\lambda_k-\widetilde\lambda_k|\qquad\mbox{and}\qquad D_\epsilon(\alpha,\widetilde \alpha)=\sum_{i=1}^{\infty}2^{-i}|\alpha_i-\tilde\alpha_i|\mathds{1}(\alpha_i\vee \tilde\alpha_i\geq \epsilon N ),
\end{equation*}
such that for a given $\ell\in \Ncal_N$ we can measure simultaneously the distance of its microscopic part from $\Lambda$ and its macroscopic part from $\alpha$.
We also introduce some new notation, for any $\ell\in\Ncal_N$ we will use the notation $\frac 1N \ell$ to denote the sequence $\left(\frac {\ell_k}N\right)_{k\in\N}$, which clearly denotes an element of $\Ncal$. On the other hand, with $\ell_{\lfloor \cdot N\rfloor}$ we denote the point measure on $(0,1]$ with weight $\ell_{k}$ at the point $
\frac kN$ for any $k=1,\dots, N$ and zero everywhere else. This integer valued measure clearly belongs to $\Mcal$.

We start by looking at the term $F_{\rm Mi}(\ell)$,  i.e., the microscopic term and we combine it  with the first term in \eqref{StirlingAppl}.

\begin{lemma}\label{lem-upper-bound-F_Mi} Fix $\Lambda\in \Ncal$. Fix $\delta>0$ and pick sequences $\ell\in \Ncal_N$ and $R_N\to\infty$  such that $d_{R_N}(\frac 1N \ell, \Lambda)\leq \delta$ for all $N$.  Then, for any $R\in\N$, as $N\to\infty$,
\begin{equation}\label{Microesti}
\left(\frac{N}{\e}\right)^{N c_\mathrm{Mi}(\ell/N)}F_{\rm Mi}(\ell)
\leq \exp \left(-N
I_{\rm Mi}(\Lambda;t)\right)\,\e^{N (C_R(\delta)+\gamma_R)+o(N)}\,\e^{-N(\frac t2-\log t)(c_\mathrm{Mi}(\ell/N)-c_{\Lambda})},
\end{equation}
where $\lim\limits_{R\to\infty}\gamma_R=0$ and $\lim\limits_{\delta\downarrow0}C_R(\delta)=0$. 
\end{lemma}
\begin{proof}

For any fixed $k\leq R_N$, we use the upper bound in~\eqref{Upp_bound_k-1_edges}, the fact that $1-x\leq \e^{-x}$ and Stirling's lower bound for $\ell_k!$ (notice that for $k$ small we expect $\ell_k$ to be large, $\Uptheta(N)$) to obtain
$$
z_k(\ell)\leq \frac{k^{(k-2)\ell_k}t_N^{(k-1)\ell_k}\e^{-\frac {t_N}{2N} k(N-k)\ell_k}}{k!^{\ell_k}N^{(k-1)\ell_k} \, (\frac 1\e\ell_k)^{\ell_k}}.
$$

We obtain, uniformly for $\ell \in \Ncal_N$, using that $\sum_{k=1}^{R_N} \frac t{2N}k^2\ell_k\leq \frac{t}{2}R_N c_\mathrm{Mi}(\ell/N)$,
\begin{equation*}
 \begin{aligned}
\left(\frac{N}{\e}\right)^{N c_\mathrm{Mi}(\ell/N)}F_{\rm Mi}(\ell)&\leq\exp \Big(-N\sum_{k=1}^{R_N}\frac 1N\ell_k\log\frac{k! \e^k \frac 1N\ell_k}{k^{k-2}t^{k-1}\e^{1-\frac t2 k}}\Big)\,\e^{o(N)}\\
&=\exp \left(-N 
I_{\rm Mi}^{\ssup {R_N}}(\smfrac 1N\ell;t)\right)\,\e^{o(N)},
\end{aligned}
\end{equation*}
where 
\begin{equation}\label{eq:I_mi_trunc}
I_{\rm Mi}^{\ssup {R_N}}(\widetilde \Lambda;t)=f^{\ssup {R_N}}(\widetilde\Lambda;t)+\sum_{k=1}^{R_N}k\widetilde\lambda_k \Big(\frac t2-\log t \Big)\qquad\text{with } f^{\ssup {R_N}}(\widetilde\Lambda;t)\colon =\sum_{k=1}^{R_N} \widetilde \lambda_k
    \log\frac{k!t\e^{k-1}\widetilde\lambda_k}{ k^{k-2}},
\end{equation}
is the cut-off version of the rate function defined in \eqref{I_mi}. Recall that $d_{R_N}(\frac 1N\ell,\Lambda)<\delta$ and that $c_\Lambda=\sum_{k\in\N}k \lambda_k\in[0,1]$ and observe that $\lim_{R\to\infty}I^{\ssup{R}}_{\rm Mi}(\Lambda;t)=I_{\rm Mi}(\Lambda;t)$. 

To prove \eqref{Microesti}, we notice that $f^{\ssup {R}}(\cdot;t)$ is continuous, it is clear that $\sup\limits_{\ell\colon \d_R(\frac 1N\ell,\Lambda)< \delta}|f^{\ssup R}(\frac 1N\ell;t)-f^{\ssup R}(\Lambda;t)|$ vanishes as $\delta\downarrow 0$ and can therefore be estimated against such a $C_R(\delta)$. Moreover, we estimate (substituting $\frac 1N\ell$ by $\widetilde \Lambda$), for any $N$ such that $R_N>R$, with the help of the Stirling bound $k!\e^{k}k^{-k}\geq 1$ and  Jensen's inequality for $\varphi(x)=x\log x$, as follows:
\begin{equation}\label{jensen}
\begin{aligned}
f^{\ssup{R_N}}(\widetilde \Lambda;t)-f^{\ssup{R}}(\widetilde \Lambda;t)&=\sum^{R_N}_{k=R+1 } \widetilde \lambda_k 
    \log\frac{k!t \e^{k-1} \widetilde \lambda_k}{k^{k-2}}
    \geq 
\sum^{R_N}_{k=R+1 }\widetilde\lambda_k\log \frac{k^2t \widetilde \lambda_k }{\e}\\
&\geq \sum^{R_N}_{k=R+1}\frac {\e}{t k^2}\varphi\Big(\sum^{R_N}_{k=R+1 }\widetilde\lambda_k\Big/\sum^{R_N}_{k=R+1 }\frac {\e}{t k^2}\Big)\\
&=\sum^{R_N}_{k=R+1 } \widetilde\lambda_k \log \Big(\sum^{R_N}_{k=R+1 }\widetilde\lambda_k \Big/\sum^{R_N}_{k=R+1 }\frac {\e}{t k^2}\Big)\geq \sum^{R_N}_{k=R+1 } \widetilde\lambda_k \log \Big(cR\sum^{R_N}_{k=R+1 }\widetilde\lambda_k\Big)\\
&\geq -\gamma_R,
\end{aligned}
\end{equation}
for some $c>0$, where we used that the remainder sum $\sum_{k>R}\frac1{k^2}$ is of order $1/R$ as $R\to\infty$ and  that $\sum^{R_N}_{k=R+1 }\widetilde\lambda_k\leq 1/R$ since $\sum_k k\widetilde\lambda_k\leq 1$ and that the map $x\mapsto x\log(cRx)$ is decreasing in $(0,1/\e Rc)$, introducing some $-\gamma_R$ that vanishes as $R\to\infty$. This proves the claim \eqref{Microesti}. 
\end{proof}

Notice that the last term on the right-hand side of  \eqref{Microesti} cannot be further estimated with the help of continuity (since $\Lambda\mapsto c_\Lambda$ is not continuous), but will be jointly handled together with the correspondent macroscopic and mesoscopic terms. Next we focus on the term $F_{\rm Ma}(\ell)$,  the macroscopic term and we proceed analogously.

\begin{lemma}\label{lem-upper-bound_F_Ma} Fix $\alpha\in  \Mcal$. Fix $\rho>0$ and pick sequences $\ell\in \Ncal_N$ and  $\eps_N\downarrow 0$ such that $D_{\eps_N}(\ell_{\lfloor \cdot N\rfloor},\alpha)\leq \rho$ for all $N$. Further assume that $|\frac 1{\eps_N}\log\eps_N|\leq o(N)$.  Then, for any  $\epsilon>0$, as $N\to\infty$,
 \begin{multline}\label{Macroesti}
\left(\frac{N}{\e}\right)^{N c_\mathrm{Ma}(\ell/N)}F_{\rm Ma}(\ell)
 \leq \exp\big(-N I_{\rm Ma}(\alpha;t)\big)\,\e^{N (C_\eps(\rho)+\gamma_\eps+\frac t2 \eps)+o(N)}\, \e^{-N(\frac t2-\log t)(c_\mathrm{Ma}(\ell/N)-c_{\alpha})},
\end{multline}
for some $C_\eps(\rho)$ and $\gamma_\eps$ that satisfy $\lim_{\eps\downarrow 0}\gamma_\eps=0$ and $\lim_{\rho\downarrow0}C_\eps(\rho)=0$. 
\end{lemma}

\begin{proof}

We use the upper bound in Lemma~\ref{mu_asy_gen} and Stirling's lower bound for $k!$ (in this case we know that $k$ is large and we expect $\ell_k$ small). We obtain, for $k\in\{\eps_N N,\dots,N\}$,
$$
z_k(\ell)\leq \frac{(1-\e^{k q_N})^{(k-1)\ell_k}\e^{-\frac {t_N}{2N} k(N-k)\ell_k}}{k^{k\ell_k}\e^{-k\ell_k}\, \ell_k!},
$$
where $q_N=\log\left(1-\frac {t_N}N\right)$.
We pair $F_{\rm Ma}(\ell)$ with the second term in \eqref{StirlingAppl}, and we obtain, uniformly for $\ell \in \Ncal_N$, 
\begin{equation}\label{F_Ma(eps)}
 \begin{aligned}
\left(\frac{N}{\e}\right)^{N c_\mathrm{Ma}(\ell/N)}F_{\rm Ma}(\ell)
& \leq \prod_{\eps_N N\leq k\leq N} \Big[\Big(\frac{N}{\e}\Big)^{k\ell_k}
\frac{(1-\e^{k q_N})^{(k-1)\ell_k}\e^{-\frac {t_N}{2} k\ell_k}\e^{\frac {t_N}{2} k^2\ell_k/N}}{k^{k\ell_k}\e^{-k\ell_k}\, \ell_k!}\Big]\\  
&\leq \prod_{\eps_N N\leq k\leq N} \Big[\Big(\frac{N}{k}\Big)^{k\ell_k}
{(1-\e^{k q_N})^{(k-1)\ell_k}\e^{-\frac {t_N}{2} k\ell_k}\e^{\frac {t_N}{2} k^2\ell_k/N}}\Big]\\
&=\exp\big(-N I_{\rm Ma}^{\ssup {\eps_N}}(\ell_{\lfloor\cdot N\rfloor};t)+o(N)\big)\\
\end{aligned}
\end{equation}
where 
\begin{equation}\label{eq:I_Ma_trunc}
I_{\rm Ma}^{\ssup {\eps_N}}(\widetilde \alpha;t)=g^{\ssup {\eps_N}}(\widetilde \alpha;t)
+ \int_{[\eps_N,1]}x\,\widetilde \alpha(\d x)\Big(\frac t2-\log t \Big)
\end{equation}
with 
$$
g^{\ssup {\eps_N}}(\widetilde \alpha;t)= \int_{[\eps_N,1]} \Big[x\log\frac{tx}{1-\e^{-tx}}-\frac t2 x^2\Big]\,\widetilde \alpha(\d x),
$$
denotes the cut-off version of the rate function $I_{\rm Ma}$ defined in  \eqref{I_mi}. 
Indeed, for proving the last line of \eqref{F_Ma(eps)} we do the following. In the product, we add the factor $(1-\e^{-kt/N})^{k l_k}$ and its reciprocal, substitute $\exp\circ \log$ and turn the sum on $k$ into an integral over $x$. Then most of the terms are easily asymptotically identified with the corresponding terms in \eqref{eq:I_Ma_trunc}, with possible exception of the term
\begin{equation}\label{smallterm}
\prod_{\eps_N N\leq k\leq N}\Big[(1-\e^{k q_N})^{(k-1)\ell_k}(1-\e^{-kt/N})^{-k l_k}\Big],
\end{equation}
of which we now show that it is not larger than $\e^{o(N)}$. Now write $\ell$ in terms of $s=(s_i)_{i\in\{1,\dots,n\}}\in E_N$ defined in \eqref{state_space_N}, such that $\sum_i s_i=N$, and we pick $i^*$ minimal such that $s_{i*+1}<\eps_N N$. Then, also using the inequalities $\log(1+y)\leq y$ and $1-\e^{-x}\leq x$, we see that
$$
\begin{aligned}
\sum_{\eps_N N\leq k\leq N}&k\ell_k\log\frac {1-\e^{k q_N}}{1-\e^{-k\frac{t}N}}
=\sum_{i=1}^{i^*} s_i\log\frac {1-\e^{s_i q_N}}{1-\e^{-s_i t/N}} 
\leq \sum_{i=1}^{i^*} s_i \frac{\e^{-s_i t/N}-\e^{s_i q_N}}{1-\e^{-s_i t/N}}\\
&=\sum_{i=1}^{i^*} s_i \frac{\e^{-s_i t/N}}{1-\e^{-s_i t/N}}\Big(1-\e^{s_i(q_N+t/N)}\Big)
\leq -\sum_{i=1}^{i^*} s_i^2 \frac{\e^{-s_i t/N}}{1-\e^{-s_i t/N}}(q_N+t/N).
\end{aligned}
$$
Recall the definition of $q_N$ to see that $q_N+t/N\leq o(1/N)$.  Use Jensen's inequality and $\sum_i s_i\leq N$ to see that the entire last term is not larger than $o(N)$. To handle the last missing term in \eqref{smallterm}, notice (because of $\sum_k k \ell_k=N$) that $\sum_{k\geq \eps_N N}\ell_k\leq 1/\eps_N$ and $1-\e^{k q_N}\geq 1-\e^{\eps_N N q_N}\geq \eps_N N q_N\sim \eps_N t$ and therefore
\[
-\sum_{k\geq \lfloor \eps_N N\rfloor}\ell_k\log\left(1-\e^{k q_N}\right)\leq -\frac1{\eps_N}{\log (\eps_N t)}\leq o(N),
\]
where we recall that we assumed that $|\frac 1{\eps_N}\log\eps_N|\leq o(N)$. Hence, the term in \eqref{smallterm} is not larger than $\e^{o(N)}$.

To prove \eqref{Macroesti}, we first observe that $g^{\ssup\eps}(\cdot;t)$ is continuous and hence $|g^{\ssup\eps}(\ell_{\lfloor\cdot N\rfloor};t)-g^{\ssup\eps}(\alpha;t)|$ can be estimated against such a $C_{\eps}(\rho)$, uniformly in $N\in\N$ and $\ell$ such that $D_{\eps_N}(\ell_{\lfloor\cdot N\rfloor},\alpha)\leq\rho$. Furthermore, for any $\eps>0$ and any $N\in\N$ such that $\eps_N<\eps$, 
$$
g^{\ssup{\eps_N}}(\ell_{\lfloor\cdot N\rfloor};t)-g^{\ssup\eps}(\ell_{\lfloor\cdot N\rfloor};t)=\sum_{k=\eps_N N}^{\eps N } \ell_k\frac kN \left(\log \frac {\frac kN t}{1-\e^{-t\frac kN}}-\frac t2\frac kN\right)\geq -\frac t2 \eps,
$$
since $\log \frac{x}{1-\e^{-x}}\geq 0$ for all  $x>0$. Hence, we arrived at the bound in  \eqref{Macroesti}.
\end{proof}

Notice that again we refrain from estimating the term $\e^{-N(\frac t2-\log t)(c_\mathrm{Ma}(\ell/N)-c_{\alpha})}$, which needs to be coupled with the microscopic and the mesoscopic part. Then we are left to handle the middle term in~\eqref{ProbANsplit}.

\begin{lemma}\label{lem-upper-bound_F_Me} Fix $(\Lambda,\alpha)\in \Ncal\times \Mcal$ such that $c_\Lambda+c_\alpha\leq 1$. Fix $\delta,\rho>0$ and pick sequences $\ell\in \Ncal_N$ and $R_N\to\infty$ and $\eps_N\downarrow 0$ such that $d_{R_N}(\frac 1N \ell, \Lambda)\leq \delta$ and $D_{\eps_N}(\ell_{\lfloor \cdot N\rfloor},\alpha)\leq \rho$ for all $N$. Further assume that $|\frac 1{\eps_N}\log\eps_N|\leq o(N)$.  Then, as $N\to\infty$,
{\small \begin{equation}\label{Mesoesti}
\left(\frac{N}{\e}\right)^{N c_\mathrm{Me}(\ell/N)}F_{\rm Me}(\ell)\leq \big(t\e^{-t/2}\big)^{N c_\mathrm{Me}(\ell/N))+o(N)}=\exp\Big(-N\Big(\frac t2-\log t\Big)c_{\rm Me}(\ell/N)\Big)\,\e^{o(N)}.
\end{equation}}
\end{lemma}
\begin{proof}

We use again  the upper bound in~\eqref{Upp_bound_k-1_edges} and Stirling's formula, to see that 
\begin{align*}
\left(\frac{N}{\e}\right)^{N c_\mathrm{Me}(\ell/N)}F_{\rm Me}(\ell)&\leq \prod_{k=R_N+1}^{\lfloor\eps_N N\rfloor } \Big[\Big(\frac{N}{\e}\Big)^{k\ell_k}
\frac{k^{(k-2)\ell_k}(1-\e^{kq_N})^{(k-1)\ell_k }\e^{-\frac {t_N}{2N} k(N-k)\ell_k}}{k!^{\ell_k}\,\ell_k!}\Big]\\
&\leq \Big(\prod_{k=R_N+1}^{\lfloor\eps_N N\rfloor }\Big[\frac{N \e}{k^2 \ell_k t}\Big]^{\ell_k}\Big)\Big(\prod_{k=R_N+1}^{\lfloor\eps_N N\rfloor }\e^{\frac t{2N}k^2\ell_k}\Big)\,\big(t\e^{-t/2}\big)^{N c_\mathrm{Me}(\ell/N)}.
\end{align*}
We claim that the right-hand side is equal to $(t\e^{-t/2})^{N c_\mathrm{Me}(\ell/N)}\e^{N L_{N}(\ell)}$ for some $L_{N}(\ell)$ that vanishes, uniformly in $\ell$, as $N\to\infty$. First note that the next-to-last term is  such a term, since  $\frac {t}{2N}\sum_{k=R_N+1}^{\lfloor\eps_N N\rfloor }k^2\ell_k\leq \frac {t}{2}\eps_N N c_\mathrm{Me}(\ell/N)$. Furthermore, $\sum_{k=R_N+1}^{\lfloor\eps_N N\rfloor }\ell_k\leq N/R_N$, which shows that the terms containing $t$ and $\e$ in the first product are also so  small. 
With the same approach as in~\eqref{jensen}, we see the lower bound
$$
\liminf_{N\to\infty}\sum_{k=R_N+1}^{\lfloor\eps_N N\rfloor }\frac{\ell_k}N\log \frac{k^2 \ell_k }{N} \geq 0.
$$
Therefore, uniformly in $\ell$ such that $D_{\eps_N}(\ell_{\lceil\cdot N\rceil},\alpha)\leq\rho$, we have arrived at the estimate~\eqref{Mesoesti}.
\end{proof}
Now we collect the upper bounds above and substitute them in \eqref{ProbANsplit}, to obtain the following lemma.

\begin{lemma}\label{lem-upper-bound} Fix $(\Lambda,\alpha)\in \Ncal\times \Mcal$ such that $c_\Lambda+c_\alpha\leq 1$. Fix $\delta,\rho>0$ and pick sequences $\ell\in \Ncal_N$ and $R_N\to\infty$ and $\eps_N\downarrow 0$ such that $d_{R_N}(\frac 1N \ell, \Lambda)\leq \delta$ and $D_{\eps_N}(\ell_{\lfloor \cdot N\rfloor},\alpha)\leq \rho$ for all $N$. Further assume that $|\frac 1{\eps_N}\log\eps_N|\leq o(N)$.  Then, for any $R\in\N$ and $\epsilon>0$,
\begin{align*}
\limsup_{N\rightarrow \infty}\frac 1N \log
\P_{N,\frac 1N t_N}\big(A_{N}(\ell)\big)\leq  &-I(\Lambda,\alpha;t) + K_{R,\eps}(\delta,\rho),
\end{align*}
where $K_{R,\eps}(\delta,\rho)$ vanishes as $\delta\downarrow 0$ and $\rho\downarrow 0$, followed by $R\to\infty$ and $\eps\downarrow 0$.
\end{lemma}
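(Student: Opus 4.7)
The plan starts from the exact product formula of Corollary \ref{cor-probAN} combined with the splitting \eqref{ProbANsplit}--\eqref{StirlingAppl}, which distributes Stirling's approximation of $N!$ among the three factors $F_\Mi, F_\Me, F_\Ma$. This yields, up to an error $o(1)$ uniform in $\ell \in \Ncal_N$,
\[
\tfrac{1}{N}\log\P_{N, t_N/N}(A_N(\ell)) = \sum_{\star \in \{\Mi, \Me, \Ma\}}\tfrac{1}{N}\log\!\Big[\big(\tfrac{N}{\e}\big)^{Nc_\star(\ell/N)} F_\star(\ell)\Big] + o(1),
\]
and I would handle each of the three summands separately. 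Since $c_{\Mi}(\ell/N) + c_{\Me}(\ell/N) + c_{\Ma}(\ell/N) = 1$, the $d_{R_N}$- and $D_{\eps_N}$-closeness assumptions on $\ell$ imply $c_{\Me}(\ell/N) \to 1 - c_\Lambda - c_\alpha$ up to errors vanishing with $\delta, \rho$, which reproduces the claimed $-(t/2 - \log t)(1 - c_\Lambda - c_\alpha)$ contribution.

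For the microscopic summand I would substitute the sharp asymptotic $\mu_k(t_N/N) = k^{k-2}(t/N)^{k-1}(1+o(1))$ from Lemma \ref{prob_finite_size} (valid uniformly for $k \leq R_N = o(\sqrt N)$), replace $(1-t_N/N)^{k(N-k)/2}$ by $\e^{-tk/2}(1+o(1))$ for bounded $k$, and apply Stirling to $\ell_k!$. After the $\log N$ contributions cancel against those in the matching factor $(N/\e)^{k\ell_k}$, the terms with $k \leq R$ form a Riemann-type sum which, by the $d_{R_N}$-closeness $\tfrac{1}{N}\ell \to \Lambda$, converges to the truncation of $-I_{\Mi}(\Lambda;t)$ to indices $k \leq R$; the tail $R < k \leq R_N$ is absorbed into the mesoscopic upper bound described next.

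For the mesoscopic summand I would combine the universal upper bound $\mu_k \leq k^{k-2}(t_N/N)^{k-1}$ from \eqref{Upp_bound_k-1_edges} with the Stirling estimate $\log\ell_k! \geq \ell_k\log\ell_k - \ell_k$ and a second-order expansion of $\log(1-t_N/N)$. A term-by-term analysis gives
\[
\tfrac{1}{N}\log\!\Big[\big(\tfrac{N}{\e}\big)^{k\ell_k} z_k(\ell)\Big] \leq \tfrac{k\ell_k}{N}\Big(\log t - \tfrac{t}{2}\Big) + \tfrac{\ell_k}{N}\log\tfrac{N\e}{k^2\ell_k} + O(\eps_N)\tfrac{k\ell_k}{N},
\]
and summing over $R_N < k \leq \eps_N N$, using $\sum k\ell_k \leq Nc_{\Me}$ for the first term and $\sum \ell_k \leq Nc_{\Me}/R_N$ for the entropy-type second term, yields the bound $c_{\Me}(\ell/N)(\log t - t/2) + O(\log N / R_N) + O(\eps_N)$. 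For the macroscopic summand I would use the sharp asymptotic \eqref{eq:macro_asymp}: each atom of $\alpha$ at $\alpha_j \geq \eps$ with integer multiplicity $m_j$, matched by components of $\ell$ in a vicinity of $k = \alpha_j N$, contributes
\[
-m_j\Big[\alpha_j \log\tfrac{\alpha_j}{1 - \e^{-t\alpha_j}} + \tfrac{t}{2}\alpha_j(1-\alpha_j)\Big] + o(1),
\]
which collectively reproduce $-I_{\Ma}(\alpha;t)$ restricted to $x \geq \eps$; atoms of $\alpha$ in the intermediate range $(\eps_N, \eps)$ are bundled into $K_{R,\eps}$ via the same upper bound as for the meso part.

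The main obstacle will be the mesoscopic estimate. As no sharp asymptotic for $\mu_k$ is available throughout $R_N < k \leq \eps_N N$, one is forced to rely on the universal upper bound from Lemma \ref{prob_finite_size}, which leaves behind the entropy-type correction $\sum_k\tfrac{\ell_k}{N}\log(N/\ell_k)$ and the logarithmic residual $\sum_k\tfrac{\ell_k}{N}\log k$, both of which must be shown to be negligible. They are controlled by the bound $\ell_k \leq N/k \leq N/R_N$ together with the standing assumption $\eps_N^{-1}|\log\eps_N| = o(N)$, provided that $R_N$ grows faster than $\log N$; this is precisely where the interplay of the two cutoffs and the technical assumption on $\eps_N$ becomes essential.
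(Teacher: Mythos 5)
Your overall architecture coincides with the paper's (same product formula, same three-block split with the Stirling factor distributed according to the three masses), but two steps as you describe them do not go through. First, the claim that the $d_{R_N}$- and $D_{\eps_N}$-closeness of $\ell$ to $(\Lambda,\alpha)$ forces $c_{\rm Me}(\ell/N)\to 1-c_\Lambda-c_\alpha$ up to errors vanishing with $\delta,\rho$ is false: the maps $\Lambda\mapsto c_\Lambda$ and $\alpha\mapsto c_\alpha$ are only lower semicontinuous, and a configuration $\ell$ can carry, say, half of its mass in components of a diverging size $k_N\le R_N$ while staying at $d_{R_N}$-distance $2^{-k_N}$ from $\Lambda$; then $c_{\rm Mi}(\ell/N)$ exceeds $c_\Lambda$ by a fixed amount and $c_{\rm Me}(\ell/N)$ does not converge to $1-c_\Lambda-c_\alpha$. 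This is exactly the point the paper flags: the mass prefactors must not be estimated by continuity, but carried along and telescoped exactly, using $c_{\rm Mi}+c_{\rm Me}+c_{\rm Ma}=1$ together with one-sided bounds ($c_{\rm Mi}(\ell/N)\ge\sum_{k\le R}k\lambda_k-C_R(\delta)$, and similarly for the macroscopic mass above $\eps$), the positive sign of the coefficients doing the rest. Your final bound is still reachable this way, but not by the argument you state. Second, your mesoscopic estimate is too crude: bounding the entropy-type term $\frac1N\sum_k\ell_k\log\frac{N\e}{k^2\ell_k}$ by $\frac{\log(N\e)}{R_N}$ only works if $R_N\gg\log N$, which is an extra hypothesis not available (the lemma must hold for an arbitrary diverging $R_N$, and after you absorb the micro tail $R<k\le R_N$ into this block the lower cutoff is the fixed $R$, for which $\log N/R$ even diverges). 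The paper instead compares $(\ell_k/N)_k$ with the summable reference sequence $(k^{-2})_k$ via Jensen's inequality for $x\log x$ (display \eqref{jensen}), which yields an $N$-independent error $\gamma_R=O(1/R)$; some such relative-entropy argument is needed here.

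Two smaller points. The standing assumption $|\frac1{\eps_N}\log\eps_N|\le o(N)$ is not what rescues the mesoscopic term; in the paper it is used in the macroscopic block, to show that the ratio $\prod(1-\e^{kq_N})^{(k-1)\ell_k}(1-\e^{-kt/N})^{-k\ell_k}$ (the term \eqref{smallterm}) is $\e^{o(N)}$ after replacing $\mu_k$ by the universal bound of Lemma \ref{mu_asy_gen}. Relatedly, using the sharp asymptotics \eqref{eq:macro_asymp} atom by atom in an upper bound is delicate, since the atoms of $\ell_{\lfloor\cdot N\rfloor}$ move with $N$ inside the $\rho$-ball and \eqref{eq:macro_asymp} is stated pointwise in $\alpha$; the paper's route via the uniform upper bound $\mu_k(p)\le(1-\e^{kq})^{k-1}$ plus continuity of the truncated functional $g^{\ssup\eps}$ avoids any uniformity issue and is the cleaner way to produce the $C_\eps(\rho)$ error.
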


\begin{proof}
We collect  the upper bound \eqref{Microesti} from Lemma~\ref{lem-upper-bound-F_Mi}, \eqref{Macroesti} from Lemma~\ref{lem-upper-bound_F_Ma} and~\eqref{Mesoesti} from Lemma~\ref{lem-upper-bound_F_Me}. We substitute them in \eqref{ProbANsplit}, also using \eqref{StirlingAppl}, then we obtain, uniformly in $\ell$ such that $d(\frac 1N\ell,\Lambda)<\delta$ and $D(\ell_{\lfloor\cdot N\rfloor},\alpha)<\rho$, for any $R\in\N$ and any $\eps>0$, as $N\to\infty$,
$$
\begin{aligned}
\frac 1N\log \P_{N,\frac 1N t_N}(A_{N,t}(\ell))
&\leq -I_{\rm Mi}(\Lambda;t)-I_{\rm Ma}(\alpha;t)+C_R(\delta)+\gamma_R+C_\eps(\rho)+\gamma_\eps+\frac t2 \eps\\
&\qquad - \Big(\frac t2-\log t\Big) (1-c_{\Lambda}-c_{\alpha})+o(1)\\
&=-I(\Lambda,\alpha;t) + K_{R,\eps}(\delta,\rho)+ o(1),
\end{aligned}
$$
where $K_{R,\eps}(\delta,\rho)$ vanishes as $\delta\downarrow 0$ and $\rho\downarrow 0$, followed by $R\to\infty$ and $\eps\downarrow 0$, and we recall that $c_{\rm Me}(\ell/N)= 1-c_{\rm Mi}(\ell/N)-c_{\rm Ma}(\ell/N)$. This implies the upper bound in \eqref{keyprob} in the case where $c_\Lambda+c_\alpha\leq 1$.

\end{proof}

In the following lemma, we implicitly use the lower semicontinuity of the maps $\Lambda\mapsto c_\Lambda$ and $\alpha\mapsto c_\alpha$ to show that when $c_{\Lambda}+c_{\alpha}>1$, then the event $A_{N,t}(\ell)$ is empty for any $\ell$ such that $d_{R_N}(\frac 1N\ell,\Lambda)\leq\delta$ and $D_{\eps_N}(\ell_{\lfloor\cdot N\rfloor},\alpha)\leq\rho$, if $\delta$ and $\rho$ are small enough. This will give the right super-exponential upper bound for $\P_{N,\frac 1N t_N}(A_{N,t}(\ell))$, since $I(\Lambda,\alpha;t)=\infty$.

\begin{lemma} Let $(\Lambda,\alpha)\in \Ncal\times \Mcal$ such that $c_\Lambda+c_\alpha> 1$, then there exists $R\in\N$, $\epsilon, \delta,\rho>0$  and $N_0$ large enough such that for all $N>N_0$: 
\begin{align*}
\left\{ \ell \in \Ncal_N\colon d_{R}(\smfrac 1N \ell, \Lambda)\leq \delta, \quad D_{\epsilon}(\ell_{\lfloor \cdot N\rfloor},\alpha)\leq \rho \right\}=\emptyset.
\end{align*}
\end{lemma}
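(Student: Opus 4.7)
The plan is to exploit the fact that every $\ell\in\Ncal_N$ satisfies $\sum_k k\ell_k=N$, so the total mass of $\frac1N\ell$ across all scales is exactly $1$. Combining this rigid constraint with quantitative versions of the lower semicontinuity of the mass functionals $c_\Lambda$ and $c_\alpha$, the assumption $c_\Lambda+c_\alpha>1$ will force a contradiction. Concretely, I would fix $\eta\in\bigl(0,\tfrac14(c_\Lambda+c_\alpha-1)\bigr)$ and aim to show that any admissible $\ell$ satisfies both $\sum_{k=1}^R k\ell_k/N\geq c_\Lambda-2\eta$ and $\sum_{k\geq\epsilon N}k\ell_k/N\geq c_\alpha-2\eta$; summing these over the disjoint index ranges will give the contradiction $1\geq c_\Lambda+c_\alpha-4\eta>1$.

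For the cut-offs, since $c_\Lambda=\sum_k k\lambda_k$ is a convergent series, I will pick $R\in\N$ with $\sum_{k=1}^Rk\lambda_k\geq c_\Lambda-\eta$. Then monotone convergence applied to $c_\alpha=\int_{(0,1]}x\,\alpha(\d x)$ produces $\epsilon_1>0$ with $\int_{[\epsilon_1,1]}x\,\alpha(\d x)\geq c_\alpha-\eta$. Let $\alpha_1\geq\cdots\geq\alpha_J\geq\epsilon_1$ (with $J\leq 1/\epsilon_1$) be the atoms of $\alpha$ of size at least $\epsilon_1$, and set $\epsilon=\epsilon_1/2$. The slack between $\epsilon_1$ and $\epsilon$ is important: it leaves room for the top atoms of any measure that is $D_\epsilon$-close to $\alpha$ to remain above the threshold $\epsilon$.

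The microscopic half is then straightforward: if $d_R(\frac1N\ell,\Lambda)\leq\delta$, the definition of $d_R$ gives $|\ell_k/N-\lambda_k|\leq 2^k\delta$ for $k\leq R$, so $\sum_{k=1}^Rk\ell_k/N\geq c_\Lambda-\eta-\delta\sum_{k=1}^Rk2^k$, which is $\geq c_\Lambda-2\eta$ provided $\delta$ is sufficiently small. The main obstacle is the macroscopic estimate: turning $D_\epsilon$-proximity into a lower bound on $\sum_{k\geq\epsilon N}k\ell_k/N$. Here I would compare the top $J$ atoms of $\ell_{\lfloor\cdot N\rfloor}$ (listed in decreasing order of location) with $\alpha_1,\dots,\alpha_J$: since each $\alpha_i\geq\epsilon_1>\epsilon$ activates the indicator in the definition of $D_\epsilon$, the $i$-th largest atom of $\ell_{\lfloor\cdot N\rfloor}$ must lie within $2^i\rho\leq 2^J\rho$ of $\alpha_i$. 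Choosing $\rho$ so small that $2^J\rho\leq\epsilon_1/2$ forces each of these $J$ atoms of $\ell_{\lfloor\cdot N\rfloor}$ to sit at location $\geq\epsilon$, so they do contribute to the macroscopic mass above $\epsilon$; further shrinking $\rho$ so that $J\cdot 2^J\rho\leq\eta$ delivers $\sum_{k\geq\epsilon N}k\ell_k/N\geq\sum_{i=1}^J\alpha_i-J\cdot 2^J\rho\geq c_\alpha-2\eta$.

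Finally, since $R$ and $\epsilon$ are fixed, for all $N$ so large that $R<\epsilon N$ the ranges $\{1,\dots,R\}$ and $\{k\colon k\geq\epsilon N\}$ are disjoint, and summing the two lower bounds gives
$$
1=\sum_{k=1}^Nk\frac{\ell_k}{N}\geq\sum_{k=1}^Rk\frac{\ell_k}{N}+\sum_{k\geq\epsilon N}k\frac{\ell_k}{N}\geq(c_\Lambda-2\eta)+(c_\alpha-2\eta)>1,
$$
the desired contradiction. Hence for these choices of $R,\epsilon,\delta,\rho$ and all sufficiently large $N$ the displayed set is empty.
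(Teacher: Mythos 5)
Your proof is correct and takes essentially the same route as the paper's: fix cutoffs $R$ and $\epsilon$ so that the truncated masses of $\Lambda$ and $\alpha$ still exceed $1$, use the closeness in $d_R$ and $D_\epsilon$ to bound the truncated microscopic and macroscopic masses of $\frac1N\ell$ from below, and contradict the constraint $\frac 1N\sum_k k\ell_k=1$. Your atom-matching argument for the macroscopic bound simply makes explicit a step the paper leaves implicit in its choice of $\delta$ and $\rho$.
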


\begin{proof}
We pick $R\in \N$ so large and $\eps\in(0,1)$ so small that $\sum_{k=1}^R k\lambda_k+\int_{[\eps,1]}x\,\alpha(\d x)$ are larger than one, say equal to $1+\eta $ for some $\eta>0$. Then choose $\delta$ and $\rho$ in $(0,1)$ so small that, for any $\ell$ such that $d_R(\frac 1N\ell,\Lambda)\leq\delta$  and $D_{\epsilon}(\ell_{\lfloor\cdot N\rfloor},\alpha)\leq\rho$, we have $\frac 1N\sum_{k=1}^Rk\ell_k
-\sum_{k=1}^R k\lambda_k\geq -\frac \eta 3$ and $ \frac 1N\sum_{k=R+1}^Nk\ell_k-\int_{[\eps,1]}x\,\alpha(\d x)\geq -\frac \eta 3$. Therefore we see that
\begin{align*}
1=\frac 1N\sum_{k=1}^Nk\ell_k &\geq \frac 1N\sum_{k=1}^Nk\ell_k
-\sum_{k=1}^R k\lambda_k-\int_{[\eps,1]}x\,\alpha(\d x)+\sum_{k=1}^R k\lambda_k+\int_{[\eps,1]}x\,\alpha(\d x)\\
&\geq -\frac \eta 3-\frac \eta3 +\sum_{k=1}^R k\lambda_k+\int_{[\eps,1]}x\,\alpha(\d x)> 1+\frac \eta3,
\end{align*}
which yields a contradiction.
\end{proof}

The remaining of this section deals with the construction of an optimal sequence $(\ell^{\ssup N})_{N\in\N}$, which will give a lower bound on the probability that matches the upper bound from Lemma~\ref{lem-upper-bound}. For $N$ large enough  define $\ell^{\ssup N} \in \Ncal_N$ by
\begin{equation}\label{recovery_seq}
\ell^{\ssup N}_k=\begin{cases}
\lfloor \lambda_k N\rfloor&\text{for }k=2,\dots,R_N-1;\\
\lfloor \frac{(1-c_{\lambda}-c_{\alpha}) N}{R_N}\rfloor&\text{for }k=R_N;\\
\alpha\big(\frac{k-1}{N},\frac kN\big] &\text{for }k= R_N+1, \dots,N;\\
N-\sum^N_{j\geq 2} j\ell^{\ssup N}_j & \text{for } k=1,
\end{cases}
\end{equation}
where $R_N$ is an arbitrary diverging sequence in $\N$ such that $R_N \ll N$.

We notice that our sequence $(\ell^{(N)})_{N\in\N}$ is such that the so-called mesoscopic mass is simply concentrated in components all of the same size (namely $R_N$) and surprisingly no specific requirement is imposed on $R_N$, except that it diverges. We will underline this in the steps of our proof.
It is clear by construction that the following hold
\begin{align}\label{conv_to_lambda}
\lim_{N\rightarrow \infty} d(\smfrac{1}{N}\ell^{\ssup N},\Lambda)&=0,\\ \label{conv_to_alpha}
\lim_{N\rightarrow \infty} D(\ell^{\ssup N}_{\lfloor\cdot N\rfloor},\alpha)&=0,\\
\end{align}
We now give lower bounds to $\P_{N,\frac 1N t_N}(A_{N,t}(\ell^{\ssup N}))$, starting from the formulation in \eqref{ProbANsplit} and \eqref{StirlingAppl}. By abuse of notation, we will drop the index $(N)$ from $\ell$.

\begin{lemma}\label{lem-lower-bound-F_Mi} Fix $(\Lambda,\alpha)\in \Ncal\times \Mcal$ such that  $I(\Lambda,\alpha;t)<\infty$ and let $\ell$ be defined by~\eqref{recovery_seq}. 
Then,  as $N\to\infty$,
\begin{equation}\label{eq_micro_lower}
 \begin{aligned}
\left(\frac{N}{\e}\right)^{N c_\mathrm{Mi}(\ell/N)}F_{\rm Mi}(\ell)&\geq\exp \left(-N 
I_{\rm Mi}^{\ssup {R_N}}(\Lambda;t)\right)\,\e^{o(N)},
\end{aligned}
\end{equation} 
where $I_{\rm Mi}^{\ssup {R_N}}(\Lambda;t)$ is defined in~\eqref{eq:I_mi_trunc}.
\end{lemma}
\begin{proof}

We use the lower bound in~\eqref{Upp_bound_k-1_edges} from Lemma~\ref{prob_finite_size} to perform a calculation similar to that for the upper bound (using now Stirling's upper bound on $\ell_k!$):
\begin{equation*}
 \begin{aligned}
\left(\frac{N}{\e}\right)^{N c_\mathrm{Mi}(\ell/N)}F_{\rm Mi}(\ell)&\geq\prod_{k=1}^{R_N} \Big[\Big(\frac{N}{\e}\Big)^{k\ell_k}
\frac{k^{(k-2)\ell_k}t_N^{(k-1)\ell_k }}{k!^{\ell_k}N^{(k-1)\ell_k} (\ell_k)^{\ell_k}\e^{-\ell_k}\sqrt{2\pi \ell_k}}\left(1-\frac{t_N}N\right)^{\frac 12 k\ell_k N-\frac 32 k\ell_k+\ell_k}\Big]\\
&=\exp \Big(-N\sum_{k=1}^{R_N}\frac 1N\ell_k\log\frac{k! \e^k \frac 1N\ell_k}{k^{k-2}t^{k-1}\e^{1-\frac t2 k}}\Big)\,\e^{o(N)}\\
&=\exp \left(-N 
I_{\rm Mi}^{\ssup {R_N}}(\Lambda;t)\right)\,\e^{o(N)},
\end{aligned}
\end{equation*} 
which proves the claim~\eqref{eq_micro_lower}.
\end{proof}
Notice that no restriction is required on $R_N$ in order for the above lower bound to coincide with the upper bound in the proof of Lemma~\ref{lem-upper-bound} (except that $R_N$ diverges). Indeed, although the lower bound in~\eqref{Upp_bound_k-1_edges} differs from the upper bound for a factor $\left(1-\frac {t_N}N\right)^{\frac{(k-1)(k-2)}2}$ the probability $\mu_{t_N}(k)$ is paired with $\left(1-\frac {t_N}N\right)^{\frac{k(N-k)}2}$ (the probability of a component to be separated from any other), which is an exact term and it balances the error coming from~\eqref{Upp_bound_k-1_edges}. In a similar way, one checks the following lower bound for the term involving $\ell_k$, for $k=R_N$.

\begin{lemma}\label{lem-lower-bound-F_Me} Fix $(\Lambda,\alpha)\in \Ncal\times \Mcal$ such that  $I(\Lambda,\alpha;t)<\infty$ and let $\ell$ be defined by~\eqref{recovery_seq}. 
Then,  as $N\to\infty$,
\begin{equation}
\label{eq_meso_lower}
\left(\frac{N}{\e}\right)^{N (1-c_{\lambda}-c_{\alpha})}z_{R_N}(\ell_{R_N})
\geq\exp\left(-(1-c_{\lambda}-c_{\alpha})\left(\frac t2-\log t\right)\right)\,\e^{o(N)}.
\end{equation}
\end{lemma}

\begin{proof}

\begin{multline*}
\left(\frac{N}{\e}\right)^{N (1-c_{\lambda}-c_{\alpha})}z_{R_N}(\ell_{R_N})\\
\geq
\Big(\frac{N}{\e}\Big)^{N (1-c_{\lambda}-c_{\alpha})}\frac{R_N^{(R_N-2)\ell_{R_N}}\left(\frac{t_N}N\right)^{(R_N-1)\ell_{R_N}}
\left(1-\frac {t_N}N\right)^{\frac 12 R_N\ell_{R_N} N-\frac 32 R_N\ell_{R_N}+\ell_{R_N}}}{R_N^{R_N\ell_{R_N}}\e^{-{R_N\ell_{R_N}}}(\sqrt{2\pi R_N})^{\ell_{R_N}}\ell_{R_N}^{\ell_{R_N}}\e^{-\ell_{R_N}}\sqrt{2\pi \ell_{R_N}}}\\
=\exp\left(-(1-c_{\lambda}-c_{\alpha})\left(\frac t2-\log t\right)\right)\,\e^{o(N)}.
\end{multline*}
The above lower bound relies on the lower bound~\eqref{Upp_bound_k-1_edges} in Lemma~\ref{prob_finite_size}, on Stirling's upper bound for $R_N!$ and $\ell_{R_N}!$ (which are both large) and on how we defined $\ell_{R_N}$ in~\eqref{recovery_seq}. 
\end{proof}
We notice that the term coming from Lemma~\ref{lem-lower-bound-F_Me}, in order to give the desired lower bound matching the upper bound from Lemma~\ref{lem-upper-bound}, does not add any condition on the sequence $R_N$, which is just supposed to diverge.

\begin{lemma}\label{lem-lower-bound-F_Ma} Fix $(\Lambda,\alpha)\in \Ncal\times \Mcal$ such that  $I(\Lambda,\alpha;t)<\infty$ and let $\ell$ be defined by~\eqref{recovery_seq}. 
Then, for all $\delta\in (0,1)$,  as $N\to\infty$, 
\begin{equation}\label{eq_macro_lower}
 \begin{aligned}
\left(\frac{N}{\e}\right)^{N c_\mathrm{Ma}(\ell/N)}F_{\rm Ma}(\ell)&\geq \exp \Big(-N\Big( I^{(\delta)}_{\rm Ma}(\alpha; t)+\big(\frac t2-\log t\big)\sum_{k=R_N+1}^{\delta N}\frac kN\alpha\big(\smfrac{k-1}{N},\smfrac kN\big]-r_{\delta}\Big)  +o(N)\Big),
\end{aligned}
\end{equation} 
where $I^{(\delta)}_{\rm Ma}(\alpha; t)$ is defined in \eqref{eq:I_Ma_trunc}, and  $r_\delta$ depends on $\int_0^\delta x\alpha(\d x)$ and it goes to zero when $\delta\searrow0$.
\end{lemma}
\begin{proof}

Now, fix $\delta\in (0,1)$. We use the lower bound \eqref{Upp_bound_k-1_edges} from Lemma \ref{prob_finite_size} and Stirling's upper bound on $k!$ and we write:
\begin{equation*}
\begin{aligned}
\prod_{k=R_N+1}^{\lfloor \delta N\rfloor} \Big(\frac{N}{\e}\Big)^{k\ell_k}
z_{k}(\ell_k)&\geq \prod_{k=R_N+1}^{\lfloor \delta N\rfloor} \Big[\Big(\frac{N}{\e}\Big)^{k\ell_k}
\frac{k^{(k-2)\ell_k}t_N^{(k-1)\ell_k }\left(1-\frac{t_N}N\right)^{\frac {kN}2\ell_k-\frac 32k\ell_k+
\frac {\ell_k}2}}{\ell_k!N^{(k-1)\ell_k} k^{k\ell_k}\e^{-{k\ell_k}}(2\pi k)^{\frac{\ell_k}{2}}}\Big]\\
&\geq \exp \Big(-N\big(\smfrac t2-\log t\big)\sum_{k=R_N+1}^{\delta N}\frac kN\alpha\big(\smfrac{k-1}{N},\smfrac kN\big] +\Rcal_{R_N,\delta}\Big),
\end{aligned}
\end{equation*}
where the remainder is defined as
\[
\Rcal_{R_N,\delta}=\sum_{k=R_N+1}^{\delta N}\ell_k\Big(\log N-\log t+\frac t{2N}-2\log k-\frac 12 \log (2\pi k)-\frac{3t}{2N}k\Big )-\sum_{k=R_N+1}^{\delta N}\log \ell_k!.
\] 
By defining $\ell_k:=\alpha\big(\frac{k-1}{N},\frac kN\big]$ and using that $\log k\leq k$, we see that
\[
\frac1{\delta}\int_{\frac{R_N+1}N}^\delta x\, \alpha(\d x)\leq \sum_{k=R_N+1}^{\delta N}\ell_k\leq \frac{N}{R_N}\int_{\frac{R_N+1}N}^\delta x\,\alpha(\d x),
\]
\[
 \sum_{k=R_N+1}^{\delta N}\ell_k\log k\leq N\int_{\frac{R_N+1}N}^\delta x\,\alpha(\d x).
\]
Finally, we use that $\sum_{h=1}^{\ell_k}\log h\leq \int_1^{\ell_k}\log (y\, )\d y$ and Jensen's inequality (since $x\log x$ is concave) to give the bound
\[
\sum_{k=R_N+1}^{\delta N}\log \ell_k!\leq \sum_{k=R_N+1}^{\delta N}\big(\ell_k\log \ell_k-\ell_k+1\big)\leq N\delta.
\] 
Exploiting the bounds above, we can say that 
\begin{equation}\label{lower_bound_small_macro}
\prod_{k=R_N+1}^{\lfloor \delta N\rfloor} \Big(\frac{N}{\e}\Big)^{k\ell_k}
z_{k}(\ell_k)\geq \exp \Big(-N\Big(\big(\frac t2-\log t\big)\sum_{k=R_N+1}^{\delta N}\frac kN\alpha\big(\smfrac{k-1}{N},\smfrac kN\big]-r_{\delta}\Big) +o(N)\Big),
\end{equation}
where $r_\delta$ depends on $\int_0^\delta x\alpha(\d x)$ and it goes to zero when $\delta\searrow0$. Notice that we could not handle directly the terms from $R_N+1$ to $N$ in one go, so we decided to fix a fictitious threshold $\delta N$, where $\delta$ can be chosen arbitrarily close to zero. This however does not affect the choice of $R_N$ and there is no need to add some condition on $R_N$ to handle such small, but macroscopic, components.

The remaining terms (the purely macroscopic ones) in \eqref{ProbANsplit} are treated as follows. We use Stirling's upper bound for $k!$ and we see that
\begin{equation*}
\begin{aligned}
\prod_{k=\lfloor \delta N\rfloor}^{N} \Big(\frac{N}{\e}\Big)^{k\ell_k}
z_{k}(\ell_k)&\geq \prod_{k=\lfloor \eps_N N\rfloor}^{\lfloor \delta N\rfloor} \Big[\Big(\frac{N}{\e}\Big)^{k\ell_k}
\frac{\mu_k(\frac 1N t_N)^{\ell_k}\left(1-\frac{t_N}N\right)^{\frac {kN}2\ell_k-\frac {k^2}2\ell_k}}{\ell_k!N^{k\ell_k} \left(\frac kN\right)^{k\ell_k}\e^{-{k\ell_k}}(2\pi k)^{\frac{\ell_k}{2}}}\Big]\\
&\geq \exp \Big(\sum_{\lfloor \delta N \rfloor+1}^N k\ell_k^\ssup{N}
  \Big[\log \left(\frac{\mu_k(\frac 1N t_N)^\frac1k}{k/N}\right) - \frac{t}{2N}(N-k)  \Big]\Big)\e^{o(N)}\\
  &=\exp \Big(N \int_{(\delta,1]} x \Big[\log\left(\frac{\mu_{\lfloor Nx\rfloor}(\frac 1N t_N)^{\frac{1}{\lfloor Nx\rfloor}}}{x}\right)- \frac{t}{2}(1-x)\Big]\alpha(\d x) \Big)\e^{o(N)}.
\end{aligned}
\end{equation*}
Let us focus on the integral $\int_{(\delta,1]} x \Big[\log\Big(\frac{\mu_{\lfloor Nx\rfloor}(\frac 1N t_N)^{\frac{1}{\lfloor Nx\rfloor}}}{x}\Big)- \frac{t}{2}(1-x)\Big]\alpha(\d x)$.
Now by \eqref{eq:macro_asymp} in Lemma~\ref{mu_asy_gen}, we know that the integrand converges pointwise to
\begin{equation*}
x \Big[\log\left(\frac{1-\e^{-tx}}{x}\right)- \frac{t}{2}(1-x)\Big].
\end{equation*}
Since, for $N$ large enough, 
\begin{equation*}
x \log\frac{x}{\mu_{\lfloor Nx\rfloor}(\frac 1N t_N)^{\frac{1}{\lfloor Nx\rfloor}}}
\leq x\left[\log\left(\frac{x}{1-\e^{-tx}}\right)+1\right],
\end{equation*}
which is clearly integrable over $x\in(\delta,1]$ with respect to $\alpha$, we can apply the dominated convergence theorem and we get 
\begin{equation}\label{lower_bound_macro}
\begin{aligned}
\prod_{k=\lfloor \delta N\rfloor}^{N} \Big(\frac{N}{\e}\Big)^{k\ell_k}
z_{k}(\ell_k)&\geq \exp \left(-N I^{(\delta)}_{\rm Ma}(\alpha; t)+o(N)\right),
\end{aligned}
\end{equation}
where $I^{(\delta)}_{\rm Ma}(\alpha; t)$ is defined in \eqref{eq:I_Ma_trunc}. Now, \eqref{lower_bound_small_macro} together with \eqref{lower_bound_macro} imply \eqref{eq_macro_lower}.
\end{proof}
Finally, we combine below the lower bounds in Lemma~\ref{lem-lower-bound-F_Mi}, \ref{lem-lower-bound-F_Me} and \ref{lem-lower-bound-F_Ma}.
\begin{lemma} \label{lem-lower-bound} For all $(\Lambda,\alpha)\in \Ncal\times \Mcal$ such that  $I(\Lambda,\alpha;t)<\infty$, there exists a sequence  $(\ell^{\ssup N})_{N\in\N}$ such that $\ell^{\ssup N }\in\Ncal_N$,  \eqref{conv_to_lambda} and \eqref{conv_to_alpha} hold, and
\begin{align}
\liminf_{N\rightarrow\infty} \frac 1N \log \P_{N,\frac 1N t_N}(A_{N,t}(\ell^{\ssup N}))&\geq-I(\Lambda,\alpha;t).\label{lower_bound}
\end{align}
\end{lemma}

\begin{proof}
From the lower bounds in \eqref{eq_micro_lower}, \eqref{eq_meso_lower} and \eqref{eq_macro_lower}, we get that there exists $a_\delta$ with $\lim_{\delta \searrow 0} a_\delta = 0$ and
\begin{equation}
\liminf_{N\rightarrow\infty} \frac 1N \log \P_{N,\frac 1N t_N}(A_{N,t}(\ell^{\ssup N}))\geq-I(\Lambda,\alpha;t) - a_\delta
\end{equation}
so \eqref{lower_bound} follows on taking the limit $\delta \searrow 0$.

\end{proof}

Now we are ready to prove Proposition \ref{Prop-mainLDP} by combining the above lemmas. 
\begin{proof}[Proof of Proposition \ref{Prop-mainLDP}]

Fix $\delta,\rho>0$ and $N\in\N$ and recall the definition of $A_{N,t}(\ell)$ in \eqref{statisticsevent}, then we see that
\begin{equation}\label{upp_bound_P}
\begin{aligned}
\P_{N,\frac 1N t_N}&\big({\rm Mi}^{\ssup N}\in B_\delta(\Lambda),\, {\rm Ma}^{\ssup N}\in B_\rho(\alpha)\big)\\
&=\sum_{\ell \in\Ncal_N}
\1\{d(\smfrac{1}{N}\ell,\Lambda)\leq \delta\}\,\1\{
D(\ell_{\lfloor\cdot N\rfloor},\alpha)\leq\rho\}\,\P_{N,\frac 1N t_N}(A_{N}(\ell)).
\end{aligned}
\end{equation}

Because of Lemma \ref{lem_card_N}, we only have to give asymptotic estimates on the single summands on the right-hand side of \eqref{upp_bound_P}. Let $(\Lambda,\alpha)\in \Ncal\times \Mcal$ such that $c_{\Lambda}+c_{\alpha}\leq 1$. We fix any diverging sequence $R_N$ and vanishing sequence $\eps_N$ with $R_N<\lfloor \eps_N N \rfloor$ and  $|\frac 1{\eps_N}\log\eps_N|\leq o(N)$, the we see that 
\[\begin{aligned}
\P_{N,\frac 1N t_N}&\big({\rm Mi}^{\ssup N}\in B_\delta(\Lambda),\, {\rm Ma}^{\ssup N}\in B_\rho(\alpha)\big)\\
&\leq \sum_{\ell \in\Ncal_N}
\1\{d_{R_N}(\smfrac{1}{N}\ell,\Lambda)\leq \delta\}\,\1\{
D_{\eps_N}(\ell_{\lfloor\cdot N\rfloor},\alpha)\leq\rho\}\,\P_{N,\frac 1N t_N}(A_{N}(\ell))\\
&\leq \exp\Big(-N I(\Lambda,\alpha; t)+N K_{R,\eps}(\delta,\rho)+ o(N)\Big),
\end{aligned}\]
as a consequence of Lemma \ref{lem-upper-bound}. Taking first $\delta$ and $\rho$ to zero and then $R\nearrow\infty$ and $\epsilon\searrow 0$, we get the desired upper bound. 

Given any fixed $\delta$ and $\rho$, we can construct the sequence $(\ell^{(N)})_{N\in\N}$ from Lemma \ref{lem-lower-bound} and see that, for such a sequence 
\[
\P_{N,\frac 1N t_N}\big({\rm Mi}^{\ssup N}\in B_\delta(\Lambda),\, {\rm Ma}^{\ssup N}\in B_\rho(\alpha)\big)\geq \P_{N,\frac 1N t_N}(A_{N}(\ell^{(N)}))\geq \exp\left(-N I(\Lambda,\alpha;t) - N a\right),
\]
for an $a$ arbitrarily small. 

Finally, if $(\Lambda,\alpha)\in\Ncal\times\Mcal$ are such that $c_{\Lambda}+c_{\alpha}>1$, Lemma \ref{lem-upper-bound} gives us that 
\[
\limsup_{N\to \infty}\frac 1N \log \P_{N,\frac 1N t_N}\big({\rm Mi}^{\ssup N}\in B_\delta(\Lambda),\, {\rm Ma}^{\ssup N}\in B_\rho(\alpha)\big)\leq -\infty.
\]
\end{proof}


\section{Corollaries and study of the rate functions}

\noindent In this section we analyse, for fixed $t\in[0,\infty)$,  the minima of the rate function,  $I(\Lambda,\alpha;t)$, over the configurations $\Lambda$ respectively $\alpha$, and afterwards the minimima of the rate functions for the total masses, $\Jcal_{\rm Mi}$, $\Jcal_{\rm Me}$ and $\Jcal_{\rm Ma}$. In particular, we will see in Lemma \ref{lem_min_lambda} that there is a drastic difference when minimizing $I(\Lambda,\alpha;t)$ over $\Lambda$ if $c_{\Lambda}\leq \frac 1t$ or not. Clearly, no such difference can be spotted when $t\leq 1$, since we do not allow for $c_{\Lambda}>1$. However, when $t>1$ we see that $c_{\Lambda}>\frac 1t$ is perfectly admissible and we interpret this as an analytic sign of the phase transition in $t=1$.

\subsection{Rate functions for the microscopic part}\label{sec-RFstat}

We start by minimizing $I(\Lambda,\alpha;t)$, for a fixed $\Lambda\in\Ncal$ over all compatible $\alpha\in\Mcal$. We will obtain the rate function for the microscopic part, and we will see that this minimum is attained for $\alpha$ of the form $\alpha=\delta_{c_{\alpha}}$.
Informally speaking, the following in particular implies that, with probability tending to one, there is at most one macroscopic particle.

\begin{lemma}[Analysis of the microscopic rate function]\label{lem_min_alpha}
Fix $\Lambda\in \Ncal$ and recall that $c_\Lambda=\sum_{k\in\N}k\lambda_k\in[0,1]$, then 
$$
\inf_{\alpha\in\Mcal_\N}I(\Lambda,\alpha;t)= I_{\rm Mi}(\Lambda;t)+I_{\rm Ma}(\delta_{1-c_{\Lambda}};t).
$$
where $I(\Lambda,\alpha;t)$, $I_{\rm Mi}(\Lambda;t)$ and $I_{\rm Ma}(\alpha;t)$ are defined in the statement of Theorem \ref{thm-LDP}.
\end{lemma}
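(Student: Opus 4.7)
The plan is to reduce the minimisation to an elementary one-variable inequality. Write $\alpha=\sum_j \delta_{\alpha_j}$ with $\alpha_j\in(0,1]$ and set $s:=1-c_\Lambda\in[0,1]$. The finiteness constraint $c_\Lambda+c_\alpha\leq 1$ in the rate function reads $\sum_j\alpha_j\leq s$; outside this region $I(\Lambda,\alpha;t)=\infty$, so the infimum is taken inside it. Expanding the mesoscopic prefactor as $(1-c_\Lambda-c_\alpha)(\tfrac t2-\log t)=s(\tfrac t2-\log t)-\sum_j\alpha_j(\tfrac t2-\log t)$ and absorbing the per-atom term into the $I_{\rm Ma}$-sum gives
$$
I(\Lambda,\alpha;t)=I_{\rm Mi}(\Lambda;t)+s\Big(\tfrac t2-\log t\Big)+\sum_j h(\alpha_j),\qquad h(x):=x\log\tfrac{tx}{1-\e^{-tx}}-\tfrac t2 x^2.
$$
Taking $\alpha=\delta_s$ (admissible when $s>0$; the case $s=0$ is trivial) one checks $I(\Lambda,\delta_s;t)=I_{\rm Mi}(\Lambda;t)+I_{\rm Ma}(\delta_s;t)$, since the mesoscopic correction vanishes when $c_\alpha=s$. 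The claim therefore reduces to the purely analytic statement
$$
\inf\Big\{\sum_j h(\alpha_j)\colon \alpha_j\in(0,1],\ \textstyle\sum_j\alpha_j\leq s\Big\}=h(s).
$$

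The key analytic input is that $\phi(x):=h(x)/x=\log\tfrac{tx}{1-\e^{-tx}}-\tfrac{tx}{2}$ is strictly decreasing on $(0,1]$ with $\phi(0^+)=0$ (a Taylor expansion gives $\phi(x)\sim -(tx)^2/24$). Differentiating and clearing denominators, monotonicity $\phi'\leq 0$ is equivalent to $F(u)\geq 0$ for all $u=tx\geq 0$, where $F(u):=u+(u-2)\e^u+2$. This follows because $F(0)=0$, $F'(u)=1+(u-1)\e^u$ satisfies $F'(0)=0$, and $F''(u)=u\e^u\geq 0$; thus $F'$ is non-decreasing from $F'(0)=0$, hence non-negative, and so is $F$.

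From $\phi(0^+)=0$ and $\phi$ strictly decreasing, two standard consequences follow. First, $\phi<0$ on $(0,1]$, so in particular $h<0$; combined with $h'(x)=\phi(x)+x\phi'(x)<0$, the function $h$ is strictly decreasing on $(0,1]$. Second, $h$ is subadditive: for $x,y>0$ with $x+y\leq 1$, both $h(x)/x$ and $h(y)/y$ dominate $h(x+y)/(x+y)$ (since $\phi$ is decreasing), giving $h(x)+h(y)\geq h(x+y)$ after multiplying and adding. Iterating subadditivity (with monotone convergence handling countably many atoms, since $h\leq 0$) yields $\sum_j h(\alpha_j)\geq h(c)$ with $c:=\sum_j\alpha_j\leq s$, and monotonicity of $h$ then gives $h(c)\geq h(s)$. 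Equality throughout is achieved by $\alpha=\delta_s$, proving the required identity.

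The only non-trivial step is the monotonicity of $\phi$, i.e.\ the positivity of $F$; once that is in hand, subadditivity and the final two-line comparison are standard.
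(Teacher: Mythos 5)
Your proof is correct, and while it lands on the same analytic core as the paper's argument, the organization is genuinely different and a bit more unified. The paper minimizes in two stages: first, for fixed macroscopic total mass $c_\alpha=c$, it shows $I_{\rm Ma}(\alpha;t)\geq I_{\rm Ma}(\delta_c;t)$ by writing $I_{\rm Ma}(\alpha;t)=\int x f_t(x)\,\alpha(\d x)$ with $f_t(x)=\log\frac{x}{1-\e^{-tx}}+\frac t2(1-x)$ and proving $f_t'<0$; second, it optimizes over $c\in[0,1-c_\Lambda]$ by showing that $g_t(c)=cf_t(c)+(1-c_\Lambda-c)(\frac t2-\log t)$ is strictly decreasing, via a separate derivative computation and the inequality $1-u+\log u\leq 0$. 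Your per-atom function satisfies $\phi=f_t+\log t-\frac t2$ and $h(c)=g_t(c)-(1-c_\Lambda)(\frac t2-\log t)$, so the analytic content is identical, but you derive both stages from the single fact that $\phi$ is strictly decreasing with $\phi(0^+)=0$: subadditivity of $h$ replaces the fixed-mass comparison with $\delta_c$ (and cleanly handles concentration into one atom, including countably many atoms, since $h\leq 0$), while $h'=\phi+x\phi'<0$ replaces the paper's $g_t'$ analysis. Your elementary proof of $\phi'\leq 0$ (positivity of $F(u)=u+(u-2)\e^u+2$ via $F'(0)=F(0)=0$ and $F''(u)=u\e^u\geq 0$) also differs from the paper's series bound $\e^{2y}<\frac{1+y}{1-y}$ with $y=tx/2$, but both are routine and equivalent after clearing denominators. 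The only point you wave at is the degenerate case $c_\Lambda=1$, where $\delta_{1-c_\Lambda}$ must be read as the zero measure with $I_{\rm Ma}=0$; this is indeed trivial and the paper is no more explicit about it.
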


\begin{proof}
Clearly 
$$
\begin{aligned}
\inf_{\alpha\in\Mcal_\N}I(\Lambda,\alpha;t)&=\inf_{c\in[0,1-c_\Lambda]}\inf_{\alpha\in\Mcal_{\N_0}(c)}I(\Lambda,\alpha;t)\\
&=I_{\rm Mi}(\Lambda;t)+\inf_{c\in[0,1-c_\Lambda]}\Big (\inf_{\alpha\in\Mcal_{\N_0}(c)}I_{\rm Ma}(\alpha;t)+(1-c_{\Lambda}-c)\Big(\frac t2-\log t\Big)\Big ).
\end{aligned}
$$
Fix $c\in[0,1]$ and $\alpha\in\Mcal_{\N_0}(c)$. Note that $\alpha((c,1])=0$ since $\alpha$ is a point measure with $\int_{(0,1]}x\,\alpha(\d x)=c$. We have, denoting   $f_t(x)= \log\frac{x}{1-\e^{-tx}}+\frac t2 (1-x)$,
\begin{equation}\label{min_tot_alpha}
I_{\rm Ma}(\alpha;t) =\int_{(0,c]} xf_t(x)\,\alpha(\d x)
\geq \int xf_t(c)\,\alpha(\d x)=cf_t(c)=I_{\rm Ma}(\delta_c;t),
\end{equation}
since  $f_t$ is strictly decreasing in $ [0,\infty)$. Indeed,
$$
f_t'(x)=\frac 1x-\frac{t \e^{-tx}}{1-\e^{-tx}}-\frac t2=\frac{t(1+y)}{2y(1-\e^{-2y})}\Big[\frac{1-y}{1+y}-\e^{-2y}\Big],\qquad y=\frac{tx}2.
$$ 
We want to prove that $f'(x)<0$ for $x\in [0,\infty)$. For $y\geq 1$, this is obvious from above, and for $y\in[0,1)$, this is easily seen as follows.
$$
\e^{2y}=1+\sum_{k=1}^{\infty}\frac{(2y)^k}{k!}<1+\sum_{k=1}^{\infty}2y^k=(1+y)\sum_{k=0}^{\infty}y^k=\frac{1+y}{1-y},
$$
since $\frac {2^k}{k!}<2$ for all $k\geq 3$.  Hence, we see that $f_t'(x)\leq 0$ for $x\in [0,\infty)$, and \eqref{min_tot_alpha} follows.

Furthermore, when we study 
$g_t(c):=c\log\frac{c}{1-\e^{-tc}}+\frac t2 c(1-c)+(1-c_{\Lambda}-c)(\frac t2-\log t)$, we see that its derivative is
\[
g'_t(c)=1- \frac{ct\e^{-ct}}{(1-\e^{-ct})}+\log \frac{ct\e^{-ct}}{(1-\e^{-ct})},\]
which is strictly negative if $\frac{ct\e^{-ct}}{(1-\e^{-ct})}\neq 1$. Since $\frac{ct\e^{-ct}}{(1-\e^{-ct})}<1$ if $ct>0$, we see that $g_t(c)$
is strictly decreasing in $c$, and hence the optimal value of $c$ is $c=1-c_{\Lambda}$.
\end{proof}

Now the proof of Corollary~\ref{cor-LDP_mi} directly follows from Theorem~\ref{thm-LDP}, Lemma~\ref{lem_min_alpha} and the contraction principle since the projection $(\Lambda,\alpha)\mapsto \Lambda$ is continuous in the product topology.
Let us mention that, since $c_{\Lambda}=\sum_{k\in\N}k\lambda_k$, we can rewrite 
\begin{equation}\label{eq_micro_entropy}
\Ical_{\rm Mi}(\Lambda;t)=\widehat I(\Lambda) - (1-c_{\Lambda})\left(\log\frac{1-\e^{(c_{\Lambda}-1)t}} {(1-c_{\Lambda})} - \frac{c_{\Lambda}t}{2}\right)+c_{\Lambda}\Big(\frac t2-\log t\Big)-
\frac 1{2t},
\end{equation}
with 
\begin{equation}\label{def:entropy}
\widehat I(\Lambda)= \sum_{k=1}^{\infty}(\lambda_k\log \frac {\lambda_k}{p_k}+p_k-\lambda_k), 
\end{equation}
with $p_k=\frac 1t\frac{k^{k-2}\e^{-k}}{k! }$ for all $k\in \N$.
Hence, the term $\widehat I(\Lambda)$ is  the relative entropy of two non-normalized measures $\Lambda$ and $p:=(p_k)_{k\in\N}$.
Notice that the reference measure $p$ is such that 
\[p_k=\frac{1}{tk}{\rm Bo}_1(k),\]
where 
\begin{equation}\label{eq:borel}
{\rm Bo}_{\mu}(k)=\frac{\e^{-\mu k}(\mu k)^{k-1}}{k!},\qquad k\in\N,
\end{equation}
are the probabilities of the Borel distribution with parameter $\mu\in[0,1]$.
The total mass of $p$ is therefore given by
\[
\sum_{k=1}^{\infty}p_k=\frac 1{t} \E\left[\frac1X\right],
\]
where $X$ is Borel distributed with parameter $1$. Now this expectation \cite[\S4.5]{AP98} is precisely $\frac12$, which explains why we added and subtracted the term $\frac 1{2t}$ to $\Ical_{\rm Mi}(\Lambda;t)$ in order to obtain the formulation \eqref{eq_micro_entropy}. As already mentioned in Section \ref{subsect:ERRG}, the above entropy form for the rate function \eqref{eq_micro_entropy} strictly relates to the rate function obtained in \cite[Theorem 1.8]{BorCap15} which also takes the form of an entropy with respect to a standard Galton Watson tree, whose total progeny is precisely Borel distributed. 

Let us analyse the minimising statistics of the macroscopic part.

\begin{lemma}[Analysis of the macroscopic rate function]\label{lem_min_lambda}
Fix $\alpha\in \Mcal_{\mathbb{N}_0}$ and recall that $c_\alpha=\int_{(0,1]}x\,\alpha(\d x)\in[0,1]$, then 
\begin{equation}\label{minlambda}
\inf_{\Lambda\in\Ncal}I(\Lambda,\alpha;t)= I_{\rm Ma}(\alpha;t)+C_{\alpha,t}\Big(\log (t C_{\alpha,t})-\frac t2 C_{\alpha,t}\Big)+(1-c_{\alpha}) \Big(\frac t2-\log t\Big),
\end{equation}
where $C_{\alpha,t}=(1-c_\alpha)\wedge \frac 1t$.

{Furthermore, the unique minimizer is equal to $\Lambda^*(C_{\alpha,t};t)$, defined in \eqref{lambdamin}.}
\end{lemma}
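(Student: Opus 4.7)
The plan is to rewrite $I(\Lambda,\alpha;t)$ so that the $\Lambda$-dependence collapses into a single strictly convex entropy-type functional, and then to solve the resulting constrained minimization by Lagrange multipliers. The first step is the algebraic regrouping
\[
I_{\rm Mi}(\Lambda;t) \,=\, G(\Lambda) + c_\Lambda\Big(\frac t2-\log t\Big),
\qquad G(\Lambda) := \sum_{k\in\N}\lambda_k\log\frac{\lambda_k}{p_k},
\qquad p_k := \frac{k^{k-2}}{k!\,t\,\e^{k-1}},
\]
which is just collecting terms inside the logarithm in \eqref{I_mi}. Substituting into the formula for $I$ from Theorem \ref{thm-LDP} cancels the $c_\Lambda$-contribution from the mesoscopic term, leaving
\[
I(\Lambda,\alpha;t) \,=\, I_{\rm Ma}(\alpha;t) + (1-c_\alpha)\Big(\frac t2-\log t\Big) + G(\Lambda).
\]
Only the last summand depends on $\Lambda$, and $I<\infty$ enforces $c_\Lambda\le 1-c_\alpha$, so the problem reduces to minimizing $G$ on the convex set $\{\Lambda\in\Ncal\colon c_\Lambda\le 1-c_\alpha\}$.

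The second step is to solve this minimization. Since $G$ is strictly convex coordinate-wise and the constraint is linear, the minimizer is unique. Setting $\partial_{\lambda_k}[G(\Lambda)-\mu\sum_j j\lambda_j]=0$ gives $\lambda_k = p_k\e^{\mu k - 1}$, and the choice $\e^{\mu-1}=ct\e^{-ct}$ reproduces precisely $\Lambda^*(c;t)$ from \eqref{lambdamin}. The unconstrained critical point ($\mu=0$) is $\Lambda^*(1/t;t)$, whose first moment can be computed as $\sum_k k\,p_k/\e \,=\, \frac{1}{t}\sum_k {\rm Bo}_1(k) \,=\, \frac{1}{t}$. Hence if $1-c_\alpha \ge 1/t$ the unconstrained minimizer is feasible and solves the problem, while if $1-c_\alpha<1/t$ convexity forces the constraint $c_\Lambda=1-c_\alpha$ to bind, and the minimizer is $\Lambda^*(1-c_\alpha;t)$; in this latter case $(1-c_\alpha)t<1$ guarantees that the identity $k\lambda_k^*(1-c_\alpha;t)=(1-c_\alpha)\,{\rm Bo}_{(1-c_\alpha)t}(k)$ indeed sums to $1-c_\alpha$ over $k$. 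In both cases the minimizer is $\Lambda^*(C_{\alpha,t};t)$, with uniqueness inherited from the strict convexity of $G$.

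The third step is to evaluate $G$ at this minimizer. For any $c\le 1/t$, using $k\lambda_k^*(c;t)=c\,{\rm Bo}_{ct}(k)$ one verifies $\log(\lambda_k^*/p_k)=k(\log(ct)+1-ct)-1$, so
\[
G(\Lambda^*(c;t)) \,=\, c\big(\log(ct)+1-ct\big)-\sum_k \lambda_k^*(c;t) \,=\, c\log(ct)-\frac{t}{2}c^2,
\]
where the last equality rests on the identity $\sum_k{\rm Bo}_\mu(k)/k = 1-\mu/2$ for $\mu\le 1$, obtained from the tree-function generating series $\sum_{k\ge 1}k^{k-2}x^k/k! = T(x)-\tfrac12 T(x)^2$ (with $T(x)\e^{-T(x)}=x$) evaluated at $x=\mu\e^{-\mu}$. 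This is the main technical step of the proof: it both produces the correct quadratic $-\tfrac t2 c^2$ and, in the degenerate case $\mu=1$, pins down the critical total mass $1/t$ of the unconstrained minimizer that governs the case distinction. Setting $c=C_{\alpha,t}$ and substituting into the decomposition of $I$ established in the first step yields \eqref{minlambda}.
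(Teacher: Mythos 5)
Your proof is correct and, at its core, follows the same strategy as the paper's: after regrouping, the $\Lambda$-dependence of $I$ is an entropy-type strictly convex functional relative to a Borel-type reference sequence, its stationary points on mass slices are exactly the $\Lambda^*(c;t)$ of \eqref{lambdamin}, and the optimal value is evaluated through the identity $\E[1/X]=1-\mu/2$ for $X\sim{\rm Bo}_\mu$ (which you derive from the tree-function series rather than citing it, as the paper does in \eqref{lambdacmin}). Where you genuinely differ is in how the mass constraint is handled: the paper works with the normalized relative entropy $\widehat I$ of \eqref{def:entropy}, minimizes slice-wise over $\Ncal(c)$ --- needing a perturbation argument for strict positivity, a directional-derivative argument for optimality, and, for $c>\frac1t$, a minimizing-sequence argument because the infimum $0$ is not attained on the slice --- and only then optimizes over $c\in[0,1-c_\alpha]$; you instead attack the inequality-constrained problem $c_\Lambda\le 1-c_\alpha$ directly via Lagrange/KKT, observing that the unconstrained minimizer $\Lambda^*(1/t;t)$ has total mass exactly $\frac1t$, so it is feasible when $1-c_\alpha\ge\frac1t$, while otherwise convexity forces the constraint to bind at $\Lambda^*(1-c_\alpha;t)$ (whose mass is indeed $1-c_\alpha$ since $(1-c_\alpha)t<1$ makes ${\rm Bo}_{(1-c_\alpha)t}$ a probability measure). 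This buys a cleaner treatment: attainment and uniqueness of the minimizer $\Lambda^*(C_{\alpha,t};t)$ on the inequality-constrained set come out directly, with no minimizing sequences. The points you leave implicit --- sufficiency of the stationarity conditions for this convex problem, the sign of the multiplier ($\mu=1+\log(ct)-ct\le 0$), and, if one insists on arguing via existence, lower semicontinuity of $G$ on the compact constraint set --- are routine and do not affect correctness.
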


\begin{proof}
As in the proof of Lemma~\ref{lem_min_alpha}, we see that
\begin{equation}\label{minlambda1}
\begin{aligned}
\inf_{\Lambda\in\Ncal}I(\Lambda,\alpha;t)&=\inf_{c\in[0,1-c_\alpha]}\inf_{\Lambda\in\Ncal(c)}I(\Lambda,\alpha;t)\\
&=I_{\rm Ma}(\alpha;t)+\inf_{c\in[0,1-c_\alpha]}\Big (\inf_{\Lambda\in\Ncal(c)} I_{\rm Mi}(\Lambda;t)+(1-c_{\alpha}-c)\Big(\frac t2-\log t\Big)\Big )\\
&=I_{\rm Ma}(\alpha;t)+ (1-c_{\alpha})\Big(\frac t2-\log t \Big)-\frac 1{2t} +\inf_{c\in[0,1-c_\alpha]}\inf_{\Lambda\in\Ncal(c)}\widehat I(\Lambda),
\end{aligned}
\end{equation}
with $\widehat I(\Lambda)$ defined in \eqref{def:entropy}.
 Fix $c\in[0,1]$. Since $\widehat I$ is strictly  convex on the convex set $\Ncal(c)$, we see by evaluating the variational equations that the only candidate for a minimiser {in the interior} is
$$
\lambda^*_k(c;t)=\frac{\e^{k(\rho-1)} k^{k-2}}{k! t },\qquad k\in\N,
$$
with $\rho\in\mathbb{R}$ such that $\sum_{k=1}^{\infty}k\lambda^*_k(c;t)=c$. Interestingly, we can identify  $k\lambda^*_k(c;t)={\rm Bo}_{\mu_\rho}(k)\mu_\rho/t$, where $\mu_\rho$ is determined by $\mu_\rho-\log\mu_\rho=1-\rho$ and ${\rm Bo}_{\mu}$ is defined in \eqref{eq:borel}. Note that ${\rm Bo}_\mu(k)$ is not summable for $\mu>1$. Hence, $\rho$ must be picked such that $c=\mu_\rho/t$. The largest value $c$ that can be realised in this way is $c=1/t$ by picking $\rho=0$.  Hence, the preceding is possible at most for $c\in [0,1\wedge \frac 1t]$. By continuity and strict monotonicity of $\sum_{k=1}^{\infty}k\lambda^*_k(c;t)$ in $\rho$, indeed, any $c\in [0,1\wedge \frac 1t]$ can be uniquely realized, by picking $\rho=-tc +\log tc+1\leq 0$ such that $\sum_{k=1}^{\infty}k\lambda^*_k(c;t)=c$.  In this case, it is clear that the minimizer of $\widehat I$ in the interior of $\Ncal(c)$ is equal to
$$
\lambda^*_k(c;t)=\frac{k^{k-2}c^k t^{k-1}\e^{-ctk}}{k!},\qquad k\in\N,
$$
as claimed in~\eqref{lambdamin}, with value
{
\begin{equation}\label{lambdacmin}
\widehat I(\Lambda^*(c;t))= c(\log ct+1-ct)-\sum_{k=1}^{\infty}\lambda^*_k(c;t)+
\frac 1{2t}=c\Big(\log tc-\frac{tc}2\Big)+
\frac 1{2t},
\end{equation} 
where we used that if $X\sim {\rm Bo}_{ct}$, then $ \sum_{k=1}^{\infty}\lambda^*_k(c;t)=\mathbb{E}\left[\frac{1}{X}\right]=1-\frac{ct}{2}$, see \cite[\S4.5]{AP98}.}
Now we give an argument why $\Lambda^*(c;t)$ realises the minimum of $\widehat I$ over $\Ncal(c)$. We show that any such minimiser must be positive in every component. Indeed, if  ${\lambda}_{k^*}=0$ for some $k^*\in\N$, then we consider $\widehat\Lambda\in\Ncal(c)$, defined by
$$
\widehat\lambda_k=\begin{cases}
                   \eps,&\text{if }k=k^*,\\
                 {  \lambda_{\widehat k}-\eps C,}&\text{if }k=\widehat k,\\
                   \lambda_k&\text{otherwise,}
                  \end{cases}
$$
with $\widehat k\in\N\setminus\{k^*\}$ such that $\lambda_{\widehat k}>0$ and $C>0$ such that $\widehat\Lambda\in\Ncal(c)$ for any sufficiently small $\eps>0$. Now a simple insertion shows that $\widehat I(\widehat \Lambda)<\widehat I(\Lambda)$, if $\eps>0$ is small enough, since the slope of $\eps\mapsto \eps\log \eps$  at zero is $-\infty$. Hence, $\Lambda$ cannot be a minimizer. On the other hand, $\Lambda^*(c;t)$ has the property that all directional derivatives of $\widehat I$ in all admissible directions with compact support are zero; hence it is the minimizer of $\widehat I$ over $\Ncal (c)$ for $c\in[0,\frac 1t]$.

    
When $c>\frac 1t$, it is possible to pick a sequence of $\Lambda^{\ssup n}\in\Ncal(c)$ such that $\lim_{n\to\infty}\widehat I(\Lambda^{\ssup n})=0$ (pick $\lambda_k^{\ssup n}$ as $\lambda^*_k(\frac 1t;t)+\eps_n \delta_n(k)$ for some suitable $\eps_n>0$). 
Furthermore, since $\widehat I(\Lambda)$ is a relative entropy, we know that $\inf_{\Lambda\in\Ncal}\widehat I(\Lambda)\geq 0.$ 
Hence, the infimum of $\widehat I$ over $\Lambda\in\Ncal(c)$ for $c\geq \frac 1t$ is equal to $0$. This shows that the infimum over $\Lambda\in\Ncal(c)$ in the last line of \eqref{minlambda1} is equal to
$(c\wedge \frac 1t)(\log\left(t(c\wedge \frac 1t)\right)-\frac t2 (c\wedge \frac 1t))$, and \eqref{minlambda} follows.
\end{proof}

Then, the proof of Corollary~\ref{cor-LDP_ma} directly follows from Theorem~\ref{thm-LDP}, Lemma~\ref{lem_min_lambda} and the contraction principle, since the projection is continuous.\\

Finally, let us draw some conclusions regarding the mesoscopic mass. As stated after Corollary~\ref{cor-LDP_me}, it is not possible to apply the contraction principle, if we want to derive an LDP for the sequence of random variables $\overline{\rm Me}^{\ssup N}_{R_N,\eps_N}(t)$, however we can still identify the rate function by minimizing $I$ over all pairs $(\Lambda,\alpha)$ such that $c_{\Lambda}+c_{\alpha}=1-c$. Even if the contraction principle cannot be applied directly, the following lemma proves that the rate function $\Jcal_{\rm Me}(c;t)$ has exactly the expected form, given by~\eqref{minlambda_alpha}. 

\begin{lemma}\label{Lem-mesoLDP}
Fix $t\in[0,\infty)$. Then, for any $c\in[0,1]$ and any $R_N\in\N$ and $\eps_N\in(0,1)$ such that $1\ll R_N< \eps_N N\ll N$, 
\begin{equation}\label{Meso_prob}
\lim_{\delta\downarrow 0}\lim_{N\rightarrow \infty}\frac 1N \log
\P_{N,\frac 1N t_N}\Big(\big|\overline{\rm Me}^{\ssup N}_{R_N,\eps_N}(t)-c\big|\leq \delta\Big)=-\Jcal_{\rm Me}(c;t).
\end{equation}
\end{lemma}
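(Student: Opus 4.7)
The plan is to combine an LDP-style argument modelled on the proof of Proposition \ref{Prop-mainLDP} with an explicit minimisation of $I$ subject to a total-mass constraint. I would first establish the variational identity
\begin{equation*}
\lim_{\delta\downarrow 0}\lim_{N\to\infty}\frac 1N\log \P_{N,\frac 1N t_N}\big(|\overline{\rm Me}^{\ssup N}_{R_N,\eps_N}-c|\leq\delta\big)\;=\;-\inf\{I(\Lambda,\alpha;t):c_\Lambda+c_\alpha=1-c\},
\end{equation*}
and then verify that the right-hand side equals the closed-form expression in \eqref{Jmesodef}. The first step is a mild adaptation of Lemmas \ref{lem-upper-bound} and \ref{lem-lower-bound}. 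Since $\mathrm{micro}+\mathrm{meso}+\mathrm{macro}=1$ for each $\ell\in\Ncal_N$, the event $\{|\overline{\rm Me}^{\ssup N}_{R_N,\eps_N}-c|\leq\delta\}$ is the event $\{c_{\rm Mi}(\ell/N)+c_{\rm Ma}(\ell/N)\in[1-c-\delta,1-c+\delta]\}$. Summing $\P_{N,\frac 1N t_N}(A_N(\ell))$ over such $\ell$ and invoking Lemma \ref{lem_card_N} together with the per-$\ell$ bound from Lemma \ref{lem-upper-bound}, any limit point $(\Lambda,\alpha)$ of an optimising sequence $\ell^{\ssup N}$ satisfies $c_\Lambda+c_\alpha\in[1-c-\delta,1-c+\delta]$ by the lower semicontinuity of $c_\Lambda$ and $c_\alpha$; sending $\delta\downarrow 0$ yields the constraint $c_\Lambda+c_\alpha=1-c$. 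The matching lower bound is obtained by choosing any admissible $(\Lambda^\ast,\alpha^\ast)$ with $c_{\Lambda^\ast}+c_{\alpha^\ast}=1-c$ and applying the recovery construction \eqref{recovery_seq} from Lemma \ref{lem-lower-bound}: the missing mass $c$ is placed in a single mesoscopic size $R_N$, so $\overline{\rm Me}^{\ssup N}_{R_N,\eps_N}(\ell^{\ssup N})\to c$ automatically.

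The explicit evaluation of the infimum uses the decomposition
\begin{equation*}
I(\Lambda,\alpha;t)=I_{\rm Mi}(\Lambda;t)+I_{\rm Ma}(\alpha;t)+c\Big(\tfrac t2-\log t\Big)\qquad\text{when }c_\Lambda+c_\alpha=1-c,
\end{equation*}
so the problem reduces to $\inf_{a+b=1-c}\bigl(F(a)+G(b)\bigr)$, with $F(a):=\inf_{\Lambda\in\Ncal(a)}I_{\rm Mi}(\Lambda;t)$ and $G(b):=\inf_{\alpha:c_\alpha=b}I_{\rm Ma}(\alpha;t)$. Lemma \ref{lem_min_alpha} already gives $G(b)=b\log\tfrac{b}{1-e^{-tb}}+\tfrac t2 b(1-b)$, attained at the single atom $\alpha=\delta_b$. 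A short direct computation based on \eqref{lambdacmin} (or equivalently the argument in Lemma \ref{lem_min_lambda}) yields $F(a)=a\log a+\tfrac{at}{2}(1-a)$ for $a\leq 1/t$ and $F(a)=\tfrac{at}{2}-a\log t-\tfrac{1}{2t}$ for $a>1/t$.

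The remaining one-dimensional optimisation is solved via $F'(a)=G'(b)$. The main technical point---and the step I expect to require the most care---is recognising that the critical-point equation simplifies, after the substitution $\gamma=tb$, to
\begin{equation*}
b=(1-c)(1-e^{-tb}),\qquad a=(1-c)e^{-tb},
\end{equation*}
which is the standard Galton--Watson extinction equation with offspring mean $t(1-c)$. When $1-c\leq 1/t$ the only non-negative solution is $b=0$, $a=1-c$; when $1-c>1/t$ a positive solution exists and satisfies $a\in(0,1/t)$, so $\Lambda^\ast(a;t)$ is admissible. In either regime, the key identity $b/(1-e^{-tb})=1-c$ at the optimum makes the logarithmic terms collapse to $(1-c)\log(1-c)$, and some bookkeeping (using $a+b=1-c$ and $a\log a=(1-c)q\log(1-c)-t(1-c)^2q(1-q)$ with $q=e^{-tb}$) yields
\begin{equation*}
F(a)+G(b)=(1-c)\log(1-c)+\tfrac{tc(1-c)}{2}.
\end{equation*}
Adding the mesoscopic contribution $c\bigl(\tfrac t2-\log t\bigr)$ reproduces precisely \eqref{Jmesodef}. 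Strict monotonicity in $c$ and the value $\Jcal_{\rm Me}(0;t)=0$ can then be read off directly from this closed form, recovering the statement of Corollary \ref{cor-LDP_me}(2) in the process.
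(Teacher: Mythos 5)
Your proposal follows essentially the same route as the paper: reduce \eqref{Meso_prob} to the constrained infimum $\inf\{I(\Lambda,\alpha;t):c_\Lambda+c_\alpha=1-c\}$ by re-using the upper-bound machinery of Proposition \ref{Prop-mainLDP} and the recovery sequence \eqref{recovery_seq} (with the optimal pair $(\Lambda^*(x^*;t),\delta_{1-c-x^*})$), and then evaluate that infimum using Lemma \ref{lem_min_alpha} (single macroscopic atom) and the Borel-type minimiser from Lemma \ref{lem_min_lambda}; your stationarity condition $a=(1-c)\e^{-t(1-c-a)}$ is exactly the paper's equation for $x^*$, your formulas for $F$ and $G$ are correct, and the closed form $(1-c)\log(1-c)+\tfrac{tc(1-c)}2+c(\tfrac t2-\log t)$ does agree with \eqref{Jmesodef}, so the content matches.

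One point in your upper bound is stated incorrectly, though it is harmless: lower semicontinuity of $\Lambda\mapsto c_\Lambda$ and $\alpha\mapsto c_\alpha$ only prevents limit points from \emph{gaining} mass, so from $c_{\rm Mi}(\ell/N)+c_{\rm Ma}(\ell/N)\in[1-c-\delta,1-c+\delta]$ you can conclude only $c_\Lambda+c_\alpha\leq 1-c+\delta$ for a limit point; mass at sizes diverging below $R_N$, or at macroscopic scales shrinking towards $\eps_N$, leaks into the mesoscopic window, so the two-sided containment (and hence the equality constraint) is not forced. The upper bound nevertheless comes out right because the relaxed infimum over $\{c_\Lambda+c_\alpha\leq 1-c+\delta\}$ equals $\Jcal_{\rm Me}\big((c-\delta)\vee 0;t\big)$ by the monotonicity of $c\mapsto\Jcal_{\rm Me}(c;t)$, which converges to $\Jcal_{\rm Me}(c;t)$ as $\delta\downarrow0$; you should phrase the argument this way (you derive the monotonicity from the closed form anyway) rather than attributing it to lower semicontinuity alone.
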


\begin{proof} We first verify that, for a 
fixed $c\in[0,1]$, 
\begin{equation}\label{minlambda_alpha}
\inf_{\substack{\Lambda\in\Ncal\\\alpha\in \Mcal_{\mathbb{N}_0}\\ c_{\Lambda}+c_{\alpha}=1-c}}I(\Lambda,\alpha;t)= \Jcal_{\rm Me}(c;t)=(1-c)\Big ( \log(1-c)t-\frac{(1-c)t}{2}\Big)+\frac t2-\log t\Big).
\end{equation}
 Fix $x\in [0,1-c]$, then for a fixed $\Lambda\in \Ncal(x)$
\begin{multline*}
\inf_{\alpha\in \Mcal_{\mathbb{N}_0}(1-c-x)}I(\Lambda,\alpha;t)
=I_{\rm Mi}(\Lambda;t) \\
+ c\Big(\frac t2-\log t\Big)+\Big[(1-c-x)\log\frac{1-c-x}{1-\e^{-t(1-c-x)}}+\frac t2 (1-c-x)(c+x)\Big],
\end{multline*} 
since the infimum is attained in $\alpha=\delta_{1-c-x}$, as proved in Lemma~\ref{lem_min_alpha}. Then, with the same procedure of Lemma~\ref{lem_min_lambda}, we see that the infimum over $\Lambda\in\Ncal(x)$ is attained in $$\lambda_k^*(x\wedge \smfrac 1t;t)=\left(x\wedge \smfrac 1t\right)\frac{\e^{-\left(x\wedge \frac 1t\right)tk}\left(\left(x\wedge \frac 1t\right)t\right)^{k-1}k^{k-2}}{k!},
$$
giving 
\begin{equation}\label{opt_meso}
\begin{aligned}
\inf_{\substack{\alpha\in \Mcal_{\mathbb{N}_0}(1-c-x)\\
\Lambda\in\Ncal(x)}}I(\Lambda,\alpha;t)
&= \left(x\log tx-\frac {tx}2+\frac 1{2t}\right)\mathds{1}(x<\smfrac 1t)+(c+x)\left(\frac t2-\log t\right)-\frac 1{2t}\\
&\qquad +(1-c-x)\log\frac{1-c-x}{1-\e^{-t(1-c-x)}}+\frac t2 (1-c-x)(c+x).
\end{aligned}
\end{equation}
Minimizing then for $x\in[0,1-c]$, we see that the infimum is attained in $x^*$, the smallest solution to 
$$
x^*=(1-c)\e^{-t(1-c-x^*)},
$$
which is $x^*=1-c$, for all $t\geq\frac{1}{1-c}$ and $x^*< 1-c$ otherwise. By substituting the optimal $x^*$ in \eqref{opt_meso}, we see that~\eqref{minlambda_alpha} holds.\\

Now, notice that the procedure to get the upper bound in the proof of Proposition~\ref{Prop-mainLDP}  implies in a straightforward way that 
$$
\lim_{\delta\downarrow 0}\limsup_{N\rightarrow \infty}\P_{N,\frac 1N t_N}\Big(\big|\overline{\rm Me}^{\ssup N}_{R_N,\eps_N}(t)-c\big|\leq \delta\Big)\leq-\inf_{\substack{\Lambda\in\Ncal\\\alpha\in \Mcal_{\mathbb{N}_0}\\ c_{\Lambda}+c_{\alpha}=1-c}}I(\Lambda,\alpha;t)=-\Jcal_{\rm Me}(c;t).
$$
In the same way, from the proof of Proposition~\ref{Prop-mainLDP}, we borrow the strategy of constructing a ``recovery sequence'', this time using $\Lambda^*(x^*;t)$ and $\alpha^*=\delta_{1-c-x^*}$ to construct $\ell^{\ssup N}$ as in~\eqref{recovery_seq}. This gives $$
\lim_{\delta\downarrow 0}\liminf_{N\rightarrow \infty}\P_{N,\frac 1N t_N}\Big(\big|\overline{\rm Me}^{\ssup N}_{R_N,\eps_N}(t)-c\big|\leq \delta\Big)\geq-\Jcal_{\rm Me}(c;t).
$$
\end{proof}

The proof of the second point in Corollary~\ref{cor-LDP_me} follows as a direct consequence of Lemma~\ref{Lem-mesoLDP}.

\subsection{Proof of Theorem~\ref{thm-phasetrans}}\label{sec-Proofphasetrans}

Item (1) follows by Lemma~\ref{lem_min_alpha} and~\ref{lem_min_lambda}. Following the approach of those proofs, one can easily see that the order of minimization is not important, in particular:
$$
\Jcal_{\rm Mi}(c;t)=\inf_{\Lambda \in \Ncal(c)} \Ical_{\rm Mi}(\Lambda;t)=\inf_{\alpha \in \Mcal_{\mathbb{N}_0}(1-c)} \Ical_{\rm Ma}(\alpha;t)=\Jcal_{\rm Ma}(1-c;t).
$$
The minimizer, given a certain microscopic mass $c\in[0,1]$, is seen to take the form 
\[
(\Lambda^*(c\wedge \smfrac 1t; t),\delta_{1-c}),
\]
where $\Lambda^*$ is defined in \eqref{lambdamin} and $\alpha=\delta_{1-c}$ is a single macroscopic component. Imposing a certain microscopic (respectively macroscopic) mass influences the optimal configuration. Indeed, although it is optimal for the system to avoid mesoscopic mass (as seen in Corollary \ref{cor-LDP_me}(2)), the impossibility of minimizing the microscopic configuration under a certain constraint on the mass, namely $c_{\Lambda}>\frac 1t$, forces the system to actually have a mesoscopic mass of size $c_{\Lambda}-\frac 1t$. The same happens when we impose a macroscopic mass which is too small, namely $c_{\alpha}<\frac {t-1}t$. 
The form of the function $\Jcal_{\rm Mi}(c;t)$ in \eqref{min_lambda} comes directly from such minimization procedures.

Let us now prove assertion (2). The form of the minimizing $\Lambda$ follows from Lemma~\ref{lem_min_lambda}. Fix $t\in[0,1]$. Then $\Jcal_{\rm Mi}(c;t)=c\log c- t c^2+t c +(1-c)\log \frac{1-c}{1-\e^{t(c-1)}}$ is strictly decreasing in $c\in[0,1]$. Indeed 
$$
\frac {\d}{\d c}\Jcal_{\rm Mi}(c;t)=\log tc -tc+\frac{t(1-c)\e^{-t(1-c)}}{1-\e^{-t(1-c)}}-\log \frac{t(1-c)\e^{-t(1-c)}}{1-\e^{-t(1-c)}}=F\Big(\frac{t(1-c)\e^{-t(1-c)}}{1-\e^{-t(1-c)}}\Big)-F(tc),
$$
where we introduced the function $F(x)=x-\log x$, which is decreasing in $x\in(0,1]$. Hence, monotonicity of $\Jcal_{\rm Mi}(\cdot;t)$  in $[0,1]$ follows from
\begin{equation}\label{inequalities}
tc\leq \frac{t(1-c)\e^{-t(1-c)}}{1-\e^{-t(1-c)}}\leq1.
\end{equation}
The first inequality follows by observing that the function $\phi_t(c)=\e^{-t(1-c)}-c$ is nonnegative for all $c\in[0,1\wedge\frac 1t]$, since  $\phi_t(0)=\e^{-t}> 0$, $\phi_t(1)=0$, and $\phi_t$ is strictly decreasing in $[0,1]$, since $t\leq 1$. The second inequality follows from the fact that $\psi(z):=1-\e^{-z}-z\e^{-z}\geq0$ for all $z\in [0,1]$ (substitute $z=t(1-c)$), since $\psi(0)=0$, $\psi(1)=1-2\e^{-1}\geq0$ and $\psi$ is strictly increasing in $[0,1]$. Therefore, $\Jcal_{\rm Mi}(\cdot;t)$ is minimized in $c=1$, which implies the conclusion.

Now we turn to assertion (3). For $t\in(1,\infty)$, the derivative of $\Jcal_{\rm Mi}(c;t)$ writes as follows
 \begin{equation*}
\begin{aligned}
 \frac {\d}{\d c}\Jcal_{\rm Mi}(c;t)&=\frac{t(1-c)\e^{-t(1-c)}}{1-\e^{-t(1-c)}}-\log \frac{t(1-c)\e^{-t(1-c)}}{1-\e^{-t(1-c)}}+
\begin{cases}
\log tc -tc&\text{for }c\leq \frac 1t,\\
-1 &\text{for }c>\frac 1t.
\end{cases}
\end{aligned}
\end{equation*} 
It is clear that  $\Jcal_{\rm Mi}(c;t)$ is strictly increasing in $c\in(\frac 1t,1]$, while for $c\in[0,\frac 1t]$, we need to go back to \eqref{inequalities}. The right inequality there is still true for any $c<\frac 1t$. Since the quotient in \eqref{inequalities} is strictly increasing in $c$ and since $F(x)=x-\lg x$ is strictly convex in $x$, the unique zero of $\frac{\d}{\d c}\Jcal_{\rm Mi}(c;t)$ is given by the unique solution $c$ of
$$
tc=\frac{t(1-c)\e^{-t(1-c)}}{1-\e^{-t(1-c)}},$$
which is precisely the solution $c=\beta_t$ of~\eqref{equilibrium}. The remaining assertions follow.

\section*{Acknowledgements}
The authors acknowledge three anonymous referees for their careful reviews and many suggestions for improving the exposition. 
This research has been funded by the
Deutsche Forschungsgemeinschaft (DFG) through grant 
CRC 1114 ``Scaling Cascades in Complex Systems'', 
Project C08.

\bibliographystyle{alpha}
\bibliography{article}

\end{document}